

%



\documentclass[12pt,a4paper]{amsart}

\usepackage{amssymb,color}
\usepackage[english]{babel}
\usepackage{verbatim,here}
\usepackage[T1]{fontenc}
\usepackage{floatflt,graphicx,graphics}
\usepackage{color,xcolor}
\usepackage{enumerate}
\usepackage{animate}
\usepackage{a4wide}
\usepackage{amsmath, amssymb}
\usepackage{amsthm}



\graphicspath{{
C://Users//vuorinen//Documents//manus09//nasser//ccciso//Latex//figures}}

\setcounter{MaxMatrixCols}{10}


\newcounter{minutes}
\setcounter{minutes}{\time}
\divide\time by 60
\newcounter{hours}
\setcounter{hours}{\time}
\multiply\time by 60 \addtocounter{minutes}{-\time}
{\small
\address{Department of Mathematics, Statistics and Physics, Qatar University, P.O. Box 2713, Doha, Qatar}
\email{mms.nasser@qu.edu.qa}
\address{Department of Mathematics and Statistics, University of Turku, 20014 Turku, Finland}
\email{vuorinen@utu.fi}

}
\keywords{Condenser capacity, hyperbolic metric, isoperimetric problems, numerical computation, boundary integral equation}
\subjclass[2010]{30C85, 31A15, 65E10}

\dedicatory{}
\commby{}
\theoremstyle{plain}
\newtheorem{thm}[equation]{Theorem}
\newtheorem{cor}[equation]{Corollary}
\newtheorem{lem}[equation]{Lemma}

\theoremstyle{definition}

\newtheorem{rem}[equation]{Remark}
\theoremstyle{remark}

\newtheorem{nonsec}[equation]{}

\numberwithin{equation}{section}

\newcommand{\beq}{\begin{equation}}
\newcommand{\eeq}{\end{equation}}
\newcommand{\ben}{\begin{enumerate}}
\newcommand{\een}{\end{enumerate}}
\newcommand{\bequu}{\begin{eqnarray*}}
\newcommand{\eequu}{\end{eqnarray*}}
\newcommand{\bequ}{\begin{eqnarray}}
\newcommand{\eequ}{\end{eqnarray}}

\newcommand{\R}{\mathbb{R}}

\newcommand{\arth}{\,\textnormal{arth}}
\newcommand{\arsh}{\,\textnormal{arsh}}
\newcommand{\sh}{\,\textnormal{sh}}
\newcommand{\ch}{\,\textnormal{ch}}
\renewcommand{\th}{\,\textnormal{th}}

\newcommand{\C}{\mathbb{C}}

\newcommand{\D}{\mathbb{D}}

\newcommand{\mM}{\mathsf{M}}
\newcommand{\M}{\mathsf{M}}
\newcommand{\G}{\Gamma}
\newcommand{\g}{\gamma}
\newcommand{\col}{\, : \,}

%

\usepackage{url}
\usepackage{color}

\renewcommand{\i}{\mathrm{i}}

\newcommand{\bs}{{\bf s}}
\newcommand{\capa}{\mathrm{cap}}



\renewcommand{\thefootnote}{\number_style{footnote}}
\begin{document}

\def\thefootnote{}

\title[]{Isoperimetric properties of condenser capacity}

\author[M.M.S. Nasser]{Mohamed M.S. Nasser}
\author[Matti Vuorinen]{Matti Vuorinen}

\date{}

\begin{abstract}
For compact subsets $E$ of the unit disk $ \mathbb{D}$ we study the capacity of the condenser
${\rm cap}( \mathbb{D},E)$ by means of set functionals defined in terms of
hyperbolic geometry. In particular, we study experimentally the case of a hyperbolic
triangle and arrive at the conjecture that of all triangles with the same hyperbolic area, the equilateral triangle has the least capacity. 
\end{abstract}

\maketitle

\footnotetext{\texttt{{\tiny File:~\jobname .tex, printed: \number\year-%
\number\month-\number\day, \thehours.\ifnum\theminutes<10{0}\fi\theminutes}}}
\makeatletter

\makeatother




\section{Introduction} \label{section1}
\setcounter{equation}{0}

Solutions to geometric extremal problems  often  exhibit symmetry---the extremal configurations are symmetric although the initial configuration is not. The classical example from antiquity is the isoperimetric problem which ask to find the planar domain with largest area, given its perimeter \cite{ban}. The solution is the circle. Here one studies the relationship between two {\it domain functionals}, the area of the domain and the perimeter of its boundary. 

The classical book of G. Polya and G. Szeg\"o \cite{pos} is a landmark of the study of extremal problems. The extremal problems they studied had geometric flavour and most of these problems have their roots in mathematical physics, but the authors called these isoperimetric problems because of their similarity to the classical geometric problem. 
One of their main topics was to investigate extremal problems of condenser capacity. 
The notion of a condenser has its roots in Physics and the mathematical study of capacity belongs to potential theory. Given a simply connected domain $G$ in the plane and a compact set $E$ in $G\,,$ the pair $(G,E)$ is called a {\it condenser} and its {\it capacity} is defined as
\[
{\capa} (G,E)={\rm inf}_u \int_G| \nabla u|^2 \, dm
\]
where the infimum is taken over all functions $u\,:\,G\to \R$ in $C^{\infty}_o(G)$  with $u(x) \ge 1 $ for all $x \in E\,.$ For a large class of sets $E$ it is known that the infimum is attained by a harmonic function $u:G \to \mathbb{R}$ which is a solution to the classical {\it  Dirichlet problem} for the Laplace equation
\[
\Delta u = 0\,, \quad u(x) =0\,,\; x \in \partial G\,,\; u(x) =1\,,\; x \in E\,.
\]

The capacity cannot usually be expressed as an analytic formula. 
The capacities of even the simplest geometric condensers, for example 
when $G$ is the unit disk and $E$ is a triangle or a square, seem to be unknown. 
Rather we could say that the cases when explicit formulas exist are exceptional.
This being the case, it is natural
to look for upper or lower bounds  or numerical approximations for the capacity. 
An important extremal property of the capacity is that it decreases under a geometric transformation called symmetrization \cite{bs,pos,sar}. After the symmetrization the condenser is transformed
onto another symmetric condenser and its capacity might be possible to estimate in terms of well-known special functions \cite{avv}.
Several types of symmetrizations are studied in \cite{pos,sar} and in the most recent literature \cite{bae,du,kes}.

In their study of isoperimetric problems and symmetrization, Polya and Szeg\"o used domain functionals to study extremal problems for condenser capacities.
The capacity is a conformal invariant \cite{ah,bae} and this fact has many
applications both to theory \cite{hkv,garmar} and to practice \cite{sl}.

A natural approach to study capacity would make use of conformal invariance. Euclidean geometry is invariant under similarity transformations but  not under conformal maps.
Thus it seems appropriate to use the conformally invariant hyperbolic geometry when studying capacity. We apply here numerical methods developed by the first author in a series of papers, see e.g., \cite{Nas-ETNA} and the references cited therein. This method, based on boundary integral equations, enables us to compute the capacity when $G$ is the unit disk $\mathbb{D}$ and the set $E$ is of very general type with piecewise smooth
boundary. Here we will study the case when $E$ is a hyperbolic polygon.

In the Euclidean geometry the sum of the angles of a triangle equals $\pi$ whereas in the 
hyperbolic geometry, the hyperbolic area of a triangle with angles  $\alpha, \beta,\gamma \,$
equals
\[
\pi -(\alpha+\beta+\gamma) \,.
\]
The two domain functionals of a hyperbolic triangle $T$, the hyperbolic area and its capacity
${\rm cap}(\mathbb{D}, T)$ are both conformally invariant and therefore we expect an explicit
formula also for the capacity. Surprisingly enough, we have not been able to find such a formula in the literature.
Some leading experts of geometric function theory we contacted also were not aware of such a formula.

In this paper we have made an effort to introduce all the basic facts from the hyperbolic geometry so as to make our paper as  self-contained as possible, using the relevant pages from \cite{be} as a source. After the preliminary material we provide a description of our computational method. Then we describe the algorithms for computing
the  capacities ${\rm cap} (\mathbb{D},E)\,$ of hyperbolic polygons $E\,$ and give our main results in the form of tables, experimental error analysis of computations, and graphics.

Our work and experiments lead to several conjectures including those about an isoarea property of the
capacity. For instance, our results  support the conjecture
 that among all hyperbolic triangles $T$ of a given area, the equilateral hyperbolic triangle $T_0$ has the least capacity,
\[
\capa (\mathbb{D},T) \ge \capa (\mathbb{D},T_0) \,.
\]

Finally, we remark that isoperimetric and isoarea problems for condenser capacities and other domain functionals have been analyzed in more general setting in the recent preprints \cite{bch, dmm}.

\section{Preliminary results}

In this section we summarize the few basic facts about the hyperbolic geometry of the unit disk
that we use in the sequel \cite{be}. This geometry is non-euclidean, the parallel axiom does not hold. 
The fundamental difference between the Euclidean geometry of $\mathbb{C}$ and the
hyperbolic geometry of  $\mathbb{D}$ is different notion of invariance: while the Euclidean
geometry is invariant with respect to translations and rotations, the hyperbolic
geometry is invariant under the groups of M\"obius automorphisms of  $\mathbb{D}\,.$
We follow the notation and terminology from \cite{be,hkv}. For instance, Euclidean disks are
denoted by
\[ B^2(x,t) = \{y \in \mathbb{C}: \, |x-y|<t\} \,.\]

\begin{nonsec}{\bf Hyperbolic geometry.} \label{hypgeo} \cite{garmar,be}
 For $x,y \in \mathbb{D}$ the {hyperbolic distance} $\rho_{\mathbb{D}}(x,y)$ is defined by
 \begin{equation} \label{myrho}
 \sh \frac{\rho_{\mathbb{D}}(x,y)}{2} = \frac{|x-y|}{\sqrt{(1-|x|^2)(1-|y|^2)}}\,.
 \end{equation}
The main property of the hyperbolic distance is the invariance under the M\"obius
 automorphisms of the unit disk $\mathbb{D}$ of the form
\[
T_a:  z \mapsto \frac{z-a}{1- \overline{a}z}\,.
\] 
These transformations preserve hyperbolic length and area.
In the metric space $(\mathbb{D}, \rho_{\mathbb{D}})$ one can build a non-euclidean geometry, where the parallel axiom does not hold. In this geometry, usually called the hyperbolic geometry of the Poincare disk, lines are circular arcs perpendicular to the boundary $\partial \mathbb{D} \,.$  Many results of Euclidean geometry and trigonometry have counterparts in the hyperbolic geometry \cite{be}.
 
 Let $G$ be a Jordan domain in the plane. One can define the hyperbolic metric on $G$ in terms of the  conformal Riemann mapping function $h: G \to \mathbb{D}= h(G)$ as follows:
 \[
 \rho_G(x,y) = \rho_{\mathbb{D}}(h(x),h(y))\,.
 \]
 This definition yields a well-defined metric, independent of the conformal mapping $h\,$
 \cite{be,kela}. In hyperbolic geometry the boundary $\partial G$ has the same role as the point of $\{\infty\}$ in Euclidean geometry: both are like ``horizon", we cannot see beyond it.
\end{nonsec} 

\begin{nonsec} {\bf Hyperbolic disks.} We use the notation
\[ B_\rho(x,M) = \{z \in \mathbb{D}: \rho(x,z)<M\}\]
for the hyperbolic disk centered at $x\in \mathbb{D}$ with radius $M>0\,.$
It is a basic fact that they are Euclidean disks with the center and radius given
by \cite[p.56, (4.20)]{hkv}
\begin{equation}\label{2.22.}
 \begin{cases}
        B_\rho(x,M)=B^2(y,r)\;,&\\
  \noalign{\vskip5pt}
      {\displaystyle y=\frac{x(1-t^2)}{1-|x|^2t^2}\;,\;\;
        r=\frac{(1-|x|^2)t}{1-|x|^2t^2}\;,\;\;t={\th}\, ( M/2)\;,}&
\end{cases}
\end{equation}
Note the special case $x= 0$,
\begin{equation} \label{hypDat0}
 B_\rho(0,M)=B^2(0,{\th}\, ( M/2)) \,.
\end{equation}
\end{nonsec} 

It turns out that
the hyperbolic geometry is more useful than the Euclidean geometry when studying the
inner geometry of domains in geometric function theory.

\begin{nonsec}{\bf Capacity of a ring domain.} {\rm 
A ring domain $D$ has two complementary components, compact sets $E$ and $F$ such that
$D= \mathbb{C}\setminus (E \cup F).$ It can be understood as a condenser
$( D \cup E, E)\,.$ In particular, 
the capacity of the annulus $\{ z \in \mathbb{C}:
a<|z|<b\}$ is given by \cite{avv}, \cite[(7.3)]{hkv}
\begin{equation} \label{anncap}
2\pi/\log(b/a) \,.
\end{equation}
Another ring domain with known capacity is the Gr\"otzsch ring or condenser 
$( \mathbb{D},[0,r]), 0<r<1\,.$ Its capacity can be expressed in terms of the complete
elliptic integral $\mathcal{K}(r)$ as follows \cite{avv}.
First define the decreasing homeomorphism $\mu: (0, 1] \rightarrow [0,\infty)$ by
\begin{equation*}\label{muDef}
\mu(r)=\frac{\pi}{2}\frac{\mathcal{K}(\sqrt{1-r^2})}{\mathcal{K}(r)}\,, \quad \mathcal{K}(r)=\int_{0}^{\pi/2} \frac{dt}{\sqrt{1-r^2\sin^2 t}} \,,
\end{equation*}
for $ r \in (0,1)\,, \, \mu(1)=0\,$.
Now the Gr\"otzsch capacity  can be expressed as follows \cite[p.122, (7.18)]{hkv}
\begin{equation} \label{GrCap}
2 \pi/ \mu(r)\,.
\end{equation}
}

\end{nonsec}

\begin{nonsec}{\bf Domain functionals and extremal problems.}
{\rm Numerical charateristics of geometric configurations are often studied in terms of domain
functionals. In this paper we study condensers and their capacities in terms of area and perimeter. The book of Polya and Szeg\"o \cite{pos} studies a large spectrum of these problems and many later researcher have continued their work. See the books of C. Bandle \cite{ban} and Kesavan
\cite{kes} for isoperimetric problems and A. Baernstein \cite{bae} and V.N. Dubinin\cite{du} for classical analysis and geometric function theory. In addition, the papers Sarvas \cite{sar},
Brock-Solynin \cite{bs}, and Betsakos \cite{bet} should be mentioned.

The condenser capacity is invariant under conformal mapping. Therefore it is a natural idea to express the domain functionals in terms of conformally invariant geometry. There are various results
for condenser capacity which reflect this invariance, but we have not seen a systematic study
based on these ideas.

We study condensers of the form $(\mathbb{D}, E)$ where $E$ is a compact set. In this
case we use the hyperbolic geometry to define domain functionals. Our initial point is to record
the relevant data \cite[p.132, Thm 7.2.2]{be} for the above two explicitly known cases, for the condensers $( \mathbb{D}, E_j)$
with $j=1,2$ where $E_1=\overline{B}_{\rho}(0,M)$ and $E_2=[0, {\th} (M/2)]\,.$ We  consider 
$E_2$ as a degenerate, extremely thin rectangle and therefore take its perimeter to be 
equal to $2M$ which is twice its hyperbolic diameter $M=\rho(0, {\th} (M/2))$.

Now consider a disk $E_1=\overline{B}_{\rho}(0,M_1)$ and a segment $E_2=[0, {\th} (M_2/2)]$ (we consider $E_2$ as a very thin rectangle) with the same hyperbolic
perimeter $c$. Then
\[
M_1 = \arsh\frac{c}{2\pi}, \quad M_2=\frac{c}{2}.
\]
Hence, by \eqref{anncap},
\[
f_1(c)\equiv\capa(\D,E_1)=\frac{2\pi}{\log(1/{\th}(M_1/2))}
=\frac{2\pi}{\log\left(1/{\th}\frac{{\arsh(c/(2\pi))}}{2}\right)}
=\frac{2\pi}{\log\left(\sqrt{1+\frac{4\pi^2}{c^2}}+\frac{2\pi}{c}\right)},
\]
and, by \eqref{GrCap},
\[
f_2(c)\equiv\capa(\D,E_2)=\frac{2\pi}{\mu({\th}(M_2/2))}
=\frac{2\pi}{\mu({\th}(c/4))}.
\]

Figure~\ref{myisopertst2} shows that for a fixed value of the hyperbolic perimeter $c>0\,,$ {\it  disks have larger capacity than very thin rectangles of the same hyperbolic
perimeter,} in other words $f_1(c) > f_2(c)$ for all $c>0\,.$ This conclusion  led us to
discover  the following,
apparently new, inequality for the special function $\mu$
\begin{equation}\label{newMuLow}
\frac{\pi}{2}>\frac{\mu(t) }{ \log\left(\sqrt{1+ u^2 } +u\right) }>1\,, \quad  u =\frac{\pi}{2 \arth\, t} \, \,,\,\, t\in(0,1)\,.
\end{equation}

 In the following sections we will study variations of this theme for
hyperbolic triangles and polygons.

\vspace{3mm}
\begin{table}[h!]
\caption{Two well-known capacities. For the perimeter, see \cite[p.132]{be}.}
\label{mytable2}
\begin{center}
\begin{tabular}{|r|p{7.5cm}|p{3cm}|}
\hline
Set & Capacity & Perimeter \\[2mm]
\hline 
{$E_1$ } &  $2\pi /\log(1/{\th}(M/2)) $&$2 \pi \,{\sh} M$  \\[2mm]
\hline
{$E_2$ } &  $ 2\pi /\mu({\th}(M/2))   $&$2M$ \\[2mm]
\hline
{$E_1$ } &  $f_1(c)\equiv
2\pi/\log(\sqrt{1+4\pi^2/c^2}+2\pi/c) $&$c$  \\[2mm]
\hline
{$E_2$ } &  $f_2(c)\equiv2\pi/\mu({\th}(c/4)) $&$c$ \\[2mm]
\hline
\end{tabular}
\end{center}
\end{table}

\begin{figure}[hbt] %
\centerline{
\scalebox{0.6}{\includegraphics[trim=0 0 0 0,clip]{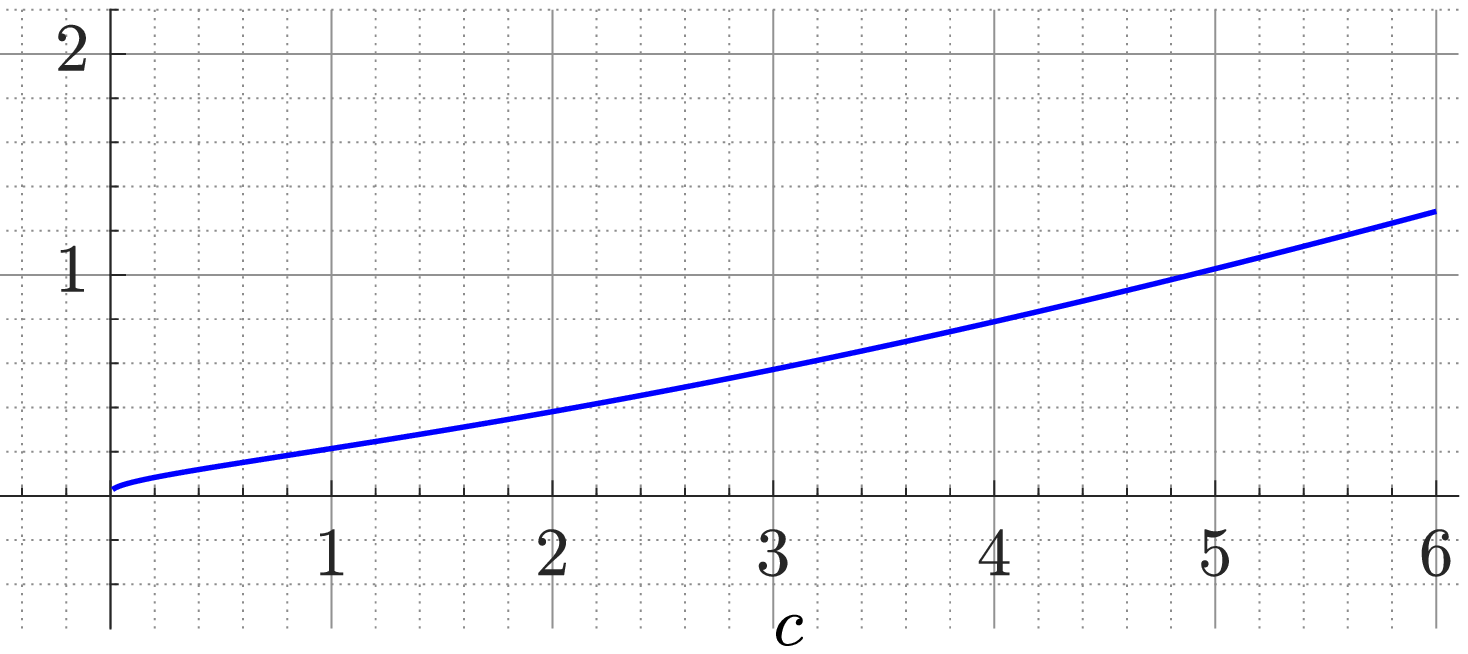}}
}
\caption{The difference of the functions $f_1(c)-f_2(c)$ defined in Table \ref{mytable2}}
\label{myisopertst2}
\end{figure}

 }
 \end{nonsec}

 \begin{nonsec}{\rm {\bf Modulus of a curve family.} \label{modulus}
For the reader's convenience we summarize some basic facts about the moduli
of curve families and their relation to capacities from the well-known sources
\cite{ah,du,garmar, hkv, LV}.
 Let $\G$ be a family of curves in $\R^n$. By $ {\mathcal A} (\G)$ we denote the family
of {\it admissible\/} functions, i.e.\ non--negative Borel--measurable
functions $ \rho \col \R^n\to \R\cup\{\infty\}$ such that
$$
   \int_{\g}  \rho\,ds\ge1
$$
for each locally rectifiable curve $\g$ in $\G$. For $p\ge 1$ the
$ p$--{\it modulus\/} of $\G$ is defined by
\begin{equation}\label{5.1.}
  \M_p(\G)=\inf_{ \rho \in {\mathcal A}  (\G)}\int_{\R^n}  \rho^p \, dm\;,
\end{equation}
where $m$ stands for the $n$--dimensional Lebesgue measure.
If $ {\mathcal A}  (\G)=\emptyset$, we set $\M_p(\G)=\infty$. The case $ {\mathcal A} (\G)=
\emptyset$ occurs only if there is a constant path in $\G$ because
otherwise the constant function $\infty$ is in $ {\mathcal A} (\G)$. Usually $p=n, n=2,$ and
we denote $\M_n(\G)$ also by $\M(\G)$ and call it the {\it modulus\/}
of $\G$. 
}
\end{nonsec}

\begin{lem}\label{le71}  \cite[7.1]{hkv} The $ p$--modulus $\M_p$ is an outer measure in the
space of all curve families in $\R^n$. That is,

{\rm(1)} $\M_p(\emptyset)=0\,$,                                     

{\rm(2)} $\G_1\subset \G_2$ implies $\M_p(\G_1)\le \M_p(\G_2)\,$,

{\rm(3)} $\M_p\Bigl({\displaystyle \bigcup_{i=1}^{\infty}}\,\G_i\Bigr)\le
       {\displaystyle {\sum_{i=1}^{\infty}}} \M_p(\G_i)\;.$
\end{lem}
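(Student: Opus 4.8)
The plan is to prove the three outer-measure properties of the $p$-modulus directly from the definition \eqref{5.1.}, in the order they are stated. This is a classical fact (it is, e.g., Theorem 7.1 of \cite{hkv} as cited), so the argument is standard; I will outline the three steps.

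\textbf{Step 1: the empty family.} For $\Gamma=\emptyset$ every nonnegative Borel function is admissible — there is no curve imposing the constraint $\int_\gamma\rho\,ds\ge 1$ — so in particular $\rho\equiv 0$ lies in $\mathcal{A}(\emptyset)$, giving $\M_p(\emptyset)\le\int_{\R^n}0\,dm=0$. Since $\M_p\ge 0$ always, this forces $\M_p(\emptyset)=0$.

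\textbf{Step 2: monotonicity.} Suppose $\Gamma_1\subset\Gamma_2$. If $\rho\in\mathcal{A}(\Gamma_2)$ then $\int_\gamma\rho\,ds\ge 1$ for every locally rectifiable $\gamma\in\Gamma_2$, hence a fortiori for every $\gamma\in\Gamma_1$; thus $\rho\in\mathcal{A}(\Gamma_1)$, i.e. $\mathcal{A}(\Gamma_2)\subset\mathcal{A}(\Gamma_1)$. Taking the infimum of $\int_{\R^n}\rho^p\,dm$ over the larger class $\mathcal{A}(\Gamma_1)$ can only decrease it, so $\M_p(\Gamma_1)\le\M_p(\Gamma_2)$. (If $\mathcal{A}(\Gamma_2)=\emptyset$ the inequality is trivial since the right side is $\infty$.)

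\textbf{Step 3: countable subadditivity.} Let $\Gamma=\bigcup_{i=1}^\infty\Gamma_i$. We may assume $\M_p(\Gamma_i)<\infty$ for every $i$, otherwise the right-hand side is $\infty$ and there is nothing to prove. Fix $\varepsilon>0$ and, for each $i$, choose $\rho_i\in\mathcal{A}(\Gamma_i)$ with $\int_{\R^n}\rho_i^p\,dm\le\M_p(\Gamma_i)+\varepsilon 2^{-i}$. Define the pointwise supremum $\rho=\sup_i\rho_i$, which is again nonnegative and Borel measurable. For any locally rectifiable $\gamma\in\Gamma$ we have $\gamma\in\Gamma_j$ for some $j$, whence $\int_\gamma\rho\,ds\ge\int_\gamma\rho_j\,ds\ge 1$; therefore $\rho\in\mathcal{A}(\Gamma)$. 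Using $\rho^p\le\sum_i\rho_i^p$ pointwise (here the main elementary input: $(\sup_i a_i)^p\le\sum_i a_i^p$ for nonnegative $a_i$ and $p\ge 1$, since some term of the sum equals $(\sup_i a_i)^p$ when the sup is attained, and in general one bounds $\sup_i a_i$ by $a_j$ for a near-maximising index) and the monotone convergence theorem to interchange sum and integral, we get
\[
\M_p(\Gamma)\le\int_{\R^n}\rho^p\,dm\le\sum_{i=1}^\infty\int_{\R^n}\rho_i^p\,dm\le\sum_{i=1}^\infty\M_p(\Gamma_i)+\varepsilon.
\]
Letting $\varepsilon\to 0$ yields $\M_p(\Gamma)\le\sum_{i=1}^\infty\M_p(\Gamma_i)$.

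The only place that requires a moment's care — and the ``main obstacle'', such as it is — is verifying that the supremum $\rho=\sup_i\rho_i$ is admissible for $\Gamma$ and that $\rho^p\le\sum_i\rho_i^p$; once these two observations are in place the rest is bookkeeping with the $\varepsilon 2^{-i}$ trick and monotone convergence. Measurability of the countable supremum of Borel functions is automatic.
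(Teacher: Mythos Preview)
The paper does not actually prove this lemma: it is stated as a preliminary fact and cited from \cite[7.1]{hkv} without argument. Your proof is the standard one and is correct. One cosmetic remark: your parenthetical justification of $(\sup_i a_i)^p\le\sum_i a_i^p$ is a little roundabout; the cleanest way is to note that $t\mapsto t^p$ is increasing on $[0,\infty)$, so $(\sup_i a_i)^p=\sup_i a_i^p\le\sum_i a_i^p$, with no need to worry about whether the supremum is attained.
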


\medskip
Let $\G_1$ and $\G_2$ be curve families in $\R^n$. We say that $\G_2$
is {\it minorized\/} by $\G_1$ and write $\G_2>\G_1$ if every $\g\in \G_2$
has a subcurve belonging to $\G_1$.
\medbreak

\begin{lem}\label{le72}  \cite[7.2]{hkv} $\G_1<\G_2$ implies $\M_p(\G_1)\ge \M_p(\G_2)$.
\end{lem}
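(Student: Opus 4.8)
The plan is to derive the modulus inequality from the inclusion of admissible families ${\mathcal A}(\G_1)\subseteq{\mathcal A}(\G_2)$. Once this inclusion is in hand, the infimum in \eqref{5.1.} defining $\M_p(\G_2)$ is taken over a set that is at least as large as the one defining $\M_p(\G_1)$, and so it cannot exceed the latter; that is precisely $\M_p(\G_1)\ge\M_p(\G_2)$. Thus the whole statement reduces to the inclusion of admissible families.

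To prove the inclusion, I would fix an arbitrary $\rho\in{\mathcal A}(\G_1)$ and check that $\rho\in{\mathcal A}(\G_2)$, i.e.\ that $\int_{\g}\rho\,ds\ge1$ for every locally rectifiable $\g\in\G_2$. By the hypothesis $\G_1<\G_2$, such a curve $\g$ has a subcurve $\g'$ with $\g'\in\G_1$. Two elementary facts then finish the argument: first, a subcurve of a locally rectifiable curve is itself locally rectifiable, so $\g'$ is a legitimate test curve for the admissibility condition defining ${\mathcal A}(\G_1)$, whence $\int_{\g'}\rho\,ds\ge1$; second, since $\rho\ge0$ is Borel measurable, the curve integral is monotone under restriction to a subcurve, so $\int_{\g}\rho\,ds\ge\int_{\g'}\rho\,ds$. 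Chaining these gives $\int_{\g}\rho\,ds\ge1$, and since $\g\in\G_2$ was arbitrary, $\rho\in{\mathcal A}(\G_2)$.

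It remains to dispose of the degenerate case ${\mathcal A}(\G_1)=\emptyset$, in which $\M_p(\G_1)=\infty$ by convention and the inequality is trivial; for consistency one may also observe here that ${\mathcal A}(\G_1)=\emptyset$ forces $\G_1$ to contain a constant path, whose only subcurves are constant, so that ${\mathcal A}(\G_2)=\emptyset$ as well, matching the inclusion ${\mathcal A}(\G_1)\subseteq{\mathcal A}(\G_2)$.

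This lemma presents no real obstacle; the only point needing a little care is the monotonicity $\int_{\g}\rho\,ds\ge\int_{\g'}\rho\,ds$. I would justify it by passing to an arc-length parametrization, under which $\g'$ corresponds to a subinterval of the parameter interval of $\g$, so the inequality is just the monotonicity of the (one-dimensional) Lebesgue integral of the non-negative integrand $\rho\circ\g$ over subsets; one should note in passing that this reasoning is insensitive to the particular parametrizations chosen for $\g$ and $\g'$.
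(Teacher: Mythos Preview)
Your argument is correct and is precisely the standard proof one finds in the cited source; the paper itself does not give a proof of this lemma but merely quotes it from \cite[7.2]{hkv}.

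One small remark: your parenthetical observation in the degenerate case is not quite right. From ${\mathcal A}(\G_1)=\emptyset$ you correctly deduce that $\G_1$ contains a constant path, but the minorization $\G_1<\G_2$ says that every curve in $\G_2$ has a subcurve in $\G_1$, not the other way around; so the presence of a constant path in $\G_1$ tells you nothing about $\G_2$, and in general ${\mathcal A}(\G_2)$ need not be empty. This does not affect the proof, since $\M_p(\G_1)=\infty$ already makes the inequality trivial and the inclusion ${\mathcal A}(\G_1)\subseteq{\mathcal A}(\G_2)$ holds vacuously when the left side is empty.
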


The curve families  $\G_1,\G_2,\dots$ are called {\it separate\/}\index{separate curve families} if there
exist disjoint Borel sets $E_i$ in $\R^n$ such that if $\g\in \G_i$ is
locally rectifiable then $\int_{\g}\chi_ids=0$ where $\chi_i$ is the
characteristic function of $\R^n\setminus E_i$.

\begin{lem}\label{le73}  \cite[7.3]{hkv} If $\G_1,\G_2,\dots$ are separate and if $\G<\G_i$ for
all $i$, then
$$
\M_p(\G)\ge \sum \M_p(\G_i)\;.
$$
\end{lem}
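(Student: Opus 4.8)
The plan is to use the separateness hypothesis to cut a given admissible function for $\G$ into pieces supported on the disjoint sets $E_i$, and then to verify that each piece is admissible for the corresponding $\G_i$. If $\mathcal{A}(\G)=\emptyset$ then $\M_p(\G)=\infty$ and the inequality is trivial, so I would fix an arbitrary $\rho\in\mathcal{A}(\G)$. Let $E_1,E_2,\dots$ be the pairwise disjoint Borel sets from the definition of separateness, so that $\int_\gamma\chi_i\,ds=0$ for every locally rectifiable $\gamma\in\G_i$, where $\chi_i$ is the characteristic function of $\Rn\setminus E_i$. For each $i$ I would put $\rho_i=\rho(1-\chi_i)$, i.e.\ $\rho_i=\rho$ on $E_i$ and $\rho_i=0$ elsewhere; this is again a non-negative Borel function.

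The first step is to check that $\rho_i\in\mathcal{A}(\G_i)$. Given a locally rectifiable $\gamma\in\G_i$, the hypothesis $\G<\G_i$ supplies a subcurve $\beta$ of $\gamma$ with $\beta\in\G$; a subcurve of a locally rectifiable curve is locally rectifiable, so $\int_\beta\rho\,ds\ge 1$ by admissibility of $\rho$. Since $\beta$ is a subcurve of $\gamma$, the arclength of the portion of $\beta$ lying outside $E_i$ is at most $\int_\gamma\chi_i\,ds=0$, whence $\int_\beta\rho\,\chi_i\,ds=0$. Writing $\rho=\rho(1-\chi_i)+\rho\chi_i$ as a sum of non-negative functions and integrating along $\beta$ then gives $\int_\beta\rho_i\,ds=\int_\beta\rho\,ds\ge 1$, and since $\rho_i\ge 0$ and $\beta$ is a subcurve of $\gamma$,
\[
\int_\gamma\rho_i\,ds\ \ge\ \int_\beta\rho_i\,ds\ \ge\ 1 ,
\]
so $\rho_i$ is admissible for $\G_i$.

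The second step is pure bookkeeping. By the definition of the modulus, $\M_p(\G_i)\le\int_{\Rn}\rho_i^p\,dm=\int_{E_i}\rho^p\,dm$, and since the $E_i$ are pairwise disjoint Borel sets,
\[
\sum_i\M_p(\G_i)\ \le\ \sum_i\int_{E_i}\rho^p\,dm\ =\ \int_{\bigcup_i E_i}\rho^p\,dm\ \le\ \int_{\Rn}\rho^p\,dm .
\]
Taking the infimum over all $\rho\in\mathcal{A}(\G)$ yields $\sum_i\M_p(\G_i)\le\M_p(\G)$, as required.

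I expect the only genuinely delicate point to be the identity $\int_\beta\rho\,\chi_i\,ds=0$: the admissible function $\rho$ may take the value $+\infty$, so one must lean on the measure-theoretic convention that a non-negative Borel function integrates to $0$ over a set of arclength measure zero (here, the part of $\beta$ outside $E_i$). Everything else—admissibility of the truncated metrics and the disjoint-support estimate—is routine. Note also that $\G<\G_i$ together with Lemma~\ref{le72} alone would only give $\M_p(\G)\ge\sup_i\M_p(\G_i)$; separateness is precisely what upgrades the supremum to the sum.
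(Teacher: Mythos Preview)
Your argument is correct and is exactly the standard proof of this fact (as in V\"ais\"al\"a or \cite[7.3]{hkv}). Note, however, that the paper does not actually prove this lemma: it is quoted verbatim from \cite[7.3]{hkv} without proof, so there is no ``paper's own proof'' to compare against. Your write-up would serve perfectly well as the missing justification.
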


The set of all curves joining two sets $E,F \subset G$ in $G$ is denoted by $\Delta(E,F;G)\,.$
The next result gives an alternative way to define the capacity of a condenser.

\begin{thm}\label{thm96}  \cite[9.6]{hkv} If $E=(A,C)$ is a bounded condenser in $\R^2$, then
$$
   \capa E=\M_2\bigl(\Delta(C,\partial A;A)\bigr)\;.
$$
\end{thm}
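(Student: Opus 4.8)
The plan is to establish the identity $\capa E = \M_2(\Delta(C,\partial A;A))$ by proving two opposite inequalities, using the variational definition of capacity on one side and the modulus definition on the other, together with the identification between admissible functions for the modulus and gradients of admissible potentials for the condenser.

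First I would recall that for a bounded condenser $E=(A,C)$ in $\R^2$ the capacity can be computed (for the regular cases relevant here) via the extremal potential $u$, namely $\capa E = \int_A |\nabla u|^2\,dm$, where $u$ is harmonic in $A\setminus C$, equals $1$ on $C$ and vanishes on $\partial A$; more generally one works with the class of functions $u$ that are (locally Lipschitz, or ACL) in $A$ with $u\ge 1$ on $C$ and $u\le 0$ on $\partial A$ in the appropriate boundary sense. For the first inequality $\capa E \ge \M_2(\Delta(C,\partial A;A))$, I would take any admissible $u$ for the condenser and show that $\rho := |\nabla u|$ is admissible for the curve family $\Gamma := \Delta(C,\partial A;A)$: indeed, for any locally rectifiable $\gamma\in\Gamma$ joining $C$ to $\partial A$ inside $A$, the fundamental theorem of calculus along $\gamma$ gives $\int_\gamma |\nabla u|\,ds \ge |u(\text{end on }C) - u(\text{end on }\partial A)| \ge 1$. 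Hence $\rho\in\mathcal{A}(\Gamma)$ and so $\M_2(\Gamma) \le \int_A \rho^2\,dm = \int_A |\nabla u|^2\,dm$; taking the infimum over admissible $u$ yields the claimed inequality.

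For the reverse inequality $\capa E \le \M_2(\Gamma)$, I would start from an admissible metric $\rho\in\mathcal{A}(\Gamma)$ with $\int_A \rho^2\,dm < \infty$ and construct from it an admissible function for the condenser. The standard device is to set
\[
v(x) = \min\Bigl\{1,\ \inf_{\gamma}\int_{\gamma}\rho\,ds\Bigr\},
\]
where the infimum runs over all rectifiable curves in $A$ joining $x$ to $\partial A$. One checks that $v$ is ACL in $A$ with $|\nabla v| \le \rho$ a.e., that $v=0$ on $\partial A$ and $v\ge 1$ on $C$ (the latter because every curve from a point of $C$ to $\partial A$ lies in $\Gamma$ and so has $\rho$-length $\ge 1$). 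Therefore $v$ is admissible and $\capa E \le \int_A |\nabla v|^2\,dm \le \int_A \rho^2\,dm$; taking the infimum over $\rho\in\mathcal{A}(\Gamma)$ gives $\capa E \le \M_2(\Gamma)$. Combining the two inequalities finishes the proof.

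The main obstacle is the second, technical half: verifying that the function $v$ built from an admissible metric actually has the regularity (absolute continuity on lines, weak gradient dominated by $\rho$) needed to plug it into the capacity integral, and that its boundary values are the right ones on both $C$ and $\partial A$. This requires either an approximation argument (mollifying $\rho$, or working with continuous admissible metrics first and passing to the limit) or an appeal to the Fuglede-type theorem that modulus-almost-every curve is rectifiable and that $v$ is absolutely continuous along such curves. One also has to be careful that $\capa E$ is genuinely realized by the variational problem over ACL functions and not merely over $C^\infty_0$ functions; for bounded condensers this is a standard density fact, and since the excerpt permits us to assume results stated earlier (in particular the variational framework of the Introduction and the modulus machinery of Lemmas~\ref{le71}--\ref{le73}), I would cite it rather than reprove it, and refer to \cite{hkv} for the full details of the ACL approximation.
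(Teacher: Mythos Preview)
Your proof sketch is the standard two-inequality argument for the identity $\capa E=\M_2(\Delta(C,\partial A;A))$, and it is correct in outline; the only delicate part, as you rightly flag, is the ACL regularity of the potential $v$ built from an admissible metric, which is handled by Fuglede-type arguments.

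However, there is nothing to compare here: the paper does not prove this theorem. It is stated with a bare citation to \cite[9.6]{hkv} and used as a black box, exactly like the preceding Lemmas~\ref{le71}--\ref{le73}. So your proposal is not ``essentially the same'' as the paper's proof, nor ``a different route'' --- it is simply a proof where the paper offers none. If the intent of the exercise was to supply what the paper omits, your sketch does that adequately and follows the classical line one finds in the cited reference.
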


One of the fundamental properties of the modulus is its conformal invariance \cite{ah}, \cite{hkv}
and by Theorem \ref{thm96} we immediately see that the condenser capacity is a conformal invariant, too.

\nonsec{\rm {\bf Numerical computing of the capacity.} \label{numcap}
A numerical method for computing the capacity of condensers for doubly connected domains is presented in~\cite{nv}. The method is based on using the boundary integral equation with the generalized Neumann kernel~\cite{Nas-Siam1,Weg-Nas}. A fast numerical method for solving the integral equation is presented in~\cite{Nas-ETNA} which makes use of the Fast Multipole Method toolbox~\cite{Gre-Gim12}.

Let $G$ be a bounded simply connected domain and $E$ be a compact set in $G$ such that $D= G\setminus E$ is a doubly connected domain. Then, there exists a conformal mapping $w=f(z)$ which maps $D$ onto the annulus $\{w\,:\, q<|w|<1\}$ where $q$ is an undetermined real constant depending on $D$. The conformal mapping can be uniquely determined by assuming that $f(\alpha)>0$ for a given point $\alpha$ in $D$ (see Figure~\ref{fig:domain}). Since capacity is invariant under conformal mapping, the formula~\eqref{anncap} implies that
\[
\capa(G,E)=\frac{2\pi}{\log(1/q)}.
\]

\begin{figure}[hbt]
\centerline{\hfill
\scalebox{0.4}{\includegraphics[trim=0cm 0cm 0cm 0cm,clip]{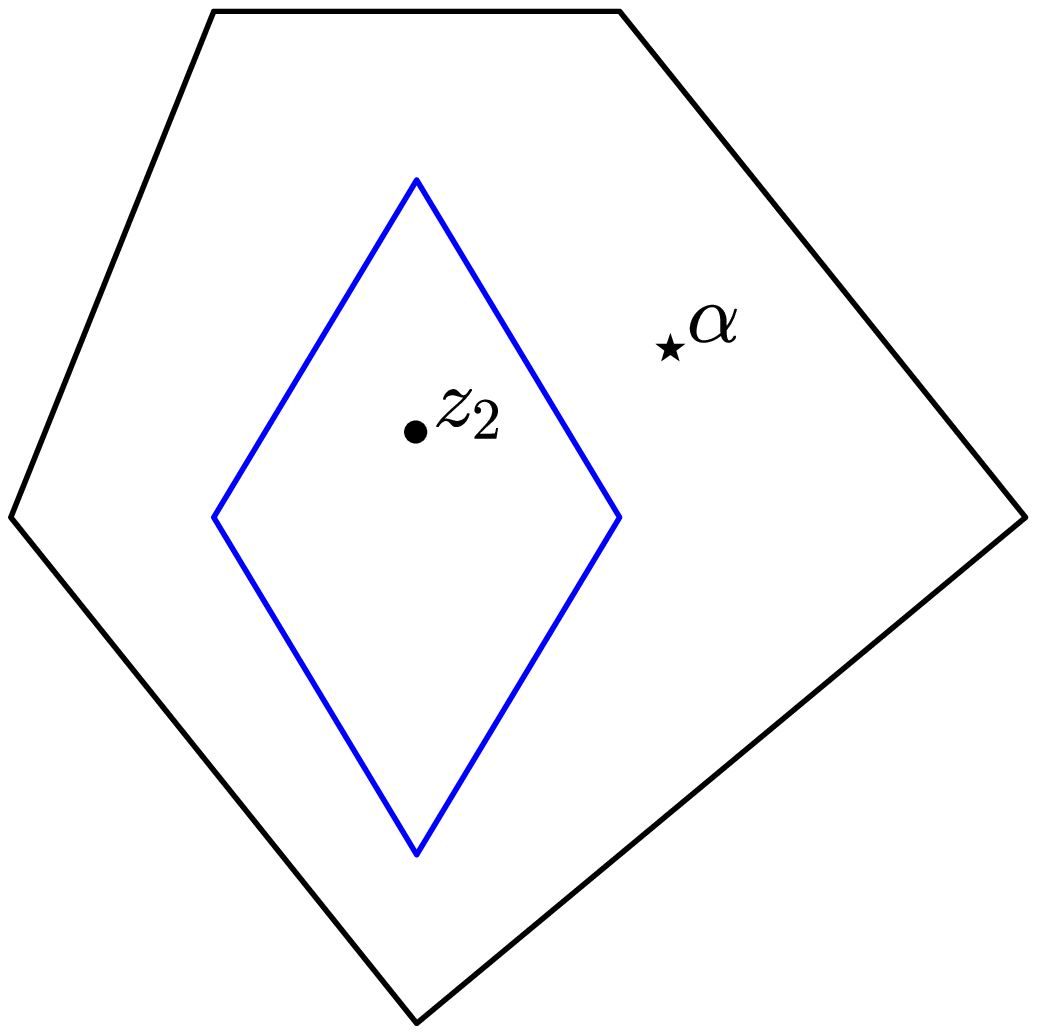}}\hfill
\scalebox{0.4}{\includegraphics[trim=0cm 0cm 0cm 0cm,clip]{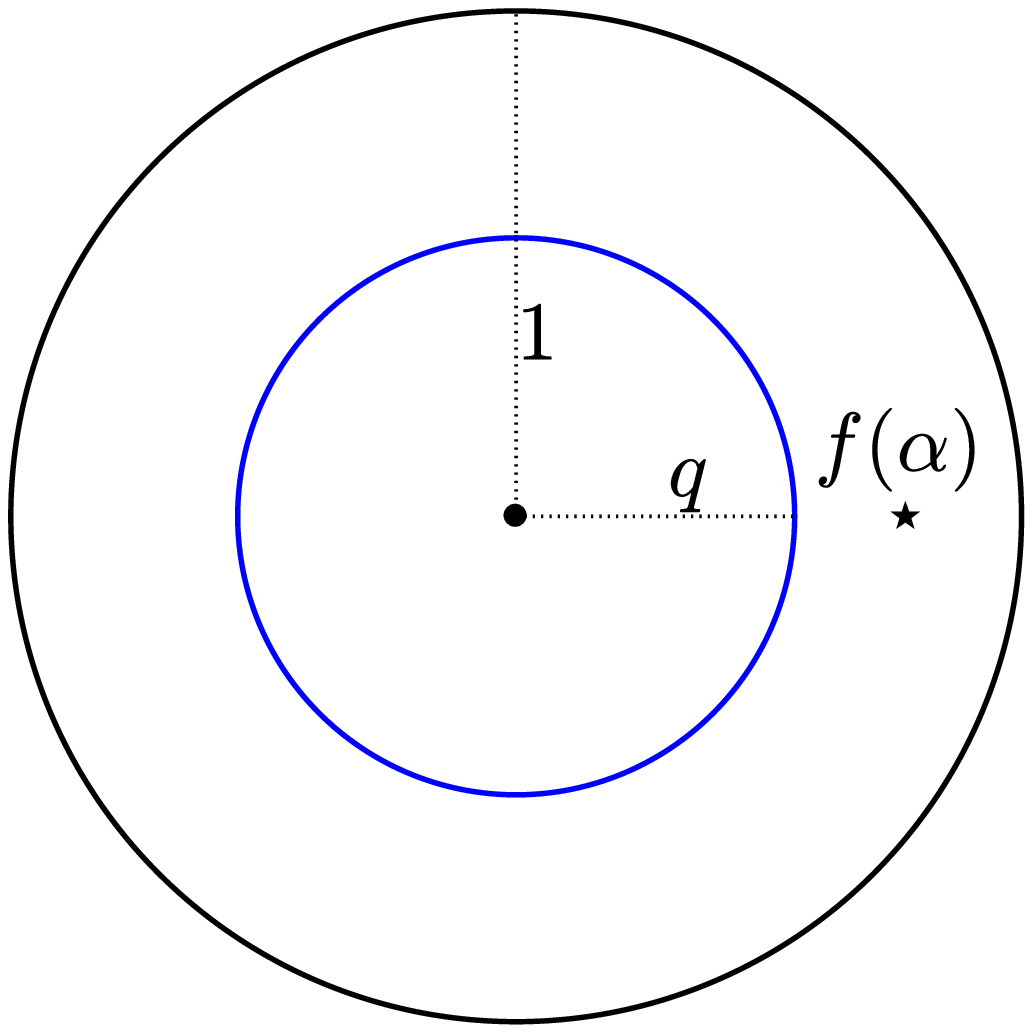}}\hfill}
\caption{The domain $D$ (left) and the annulus domain (right).}
\label{fig:domain}
\end{figure}

In this paper, we shall use the MATLAB function \verb|annq| from~\cite{nv} to compute $\capa(G,E)$ where the boundary components of $D= G\setminus E$ are assumed to be piecewise smooth Jordan curves. 
We denote the external boundary component of $D$ by $\Gamma_1$ and the inner boundary component by $\Gamma_2$. These boundary components are oriented such that, when we proceed along the boundary $\Gamma_1\cup\Gamma_2$, the domain $D$ is always on the left side. 
To use the function \verb|annq|, we parametrize each boundary component $\Gamma_j$ by a $2\pi$-periodic complex function $\eta_j(\delta_j(t))$, $t\in[0,2\pi]$, where $\delta_j\,:\,[0,2\pi]\to[0,2\pi]$ is a bijective strictly monotonically increasing function,
$j=1,2$. When $\Gamma_j$ is smooth, we choose $\delta_j(t)=1$. For piecewise smooth boundary component $\Gamma_j$, the function $\delta_j$ is chosen as described in~\cite[p.~697]{LSN17}. We define $n$ equidistant nodes $s_1, \ldots, s_n$ in the interval $[0,2\pi]$ by
\begin{equation}\label{eq:s_i}
s_k = (k-1) \frac{2 \pi}{n}, \quad k = 1, \ldots, n,
\end{equation}
where $n$ is an even integer. 
In MATLAB, we compute the vectors \texttt{et} and \texttt{etp} by
\begin{eqnarray*}
\texttt{et} &=& [\eta_1(\delta_1(\bs))\,,\,\eta_2(\delta_2(\bs))]\in\C^{2n}, \\
\texttt{etp} &=& [\eta_1'(\delta_1(\bs))\delta_1'(\bs)\,,\,\eta_2'(\delta_2(\bs))\delta_2'(\bs)]\in\C^{2n},  
\end{eqnarray*}
where $\bs=[s_1,\ldots, s_n]\in\R^n$. Then the capacity of the condenser $(G,E)$ is computed by calling
\begin{verbatim}
  [~,cap] = annq(et,etp,n,alpha,z2,'b'),
\end{verbatim}
where $z_2$ is  an auxiliary point in $E$, i.e., $z_2$ is chosen in the domain bounded by $\Gamma_2$ (See Figure~\ref{fig:domain}).
For more details, we refer the reader to~\cite{nv}.
The codes for all presented computations in this paper are available in the link \url{https://github.com/mmsnasser/iso}.


\section{The unit disk and a hyperbolic polygon}

In this section we compare  the capacities ${\rm cap}(\mathbb{D}, P)$ of hyperbolic polygons of equal hyperbolic area to the corresponding capacity when all the sides are of equal hyperbolic length. Our experiments suggest that in the latter case the capacity is minimal.

We reported this experimental result to A. Yu. Solynin, who has considered similar questions for a different notion of capacity \cite{so1,so2,soz}. In their significant paper \cite{soz} Solynin  and Zalgaller have proved a famous conjecture which expresses
a similar extremal property conjectured by Polya and Szeg\"o for some other capacity.
We are indebted to A. Solynin for these references and useful exchange of emails. However,
for the case of the capacity considered here,  at the present time, there is no analytic verification of our conjectured lower bound.

\nonsec{\bf Hyperbolic triangle.}
First, we consider condensers of the form $(\mathbb{D},T)$
where $T$ is a closed hyperbolic triangle with vertices
$s_1,s_2,s_3 \in \mathbb{D}\,.$ The sides of $T$ are subarcs of circles
orthogonal to $ \partial \mathbb{D}\,,$ each subarc joining two vertices.
Denote the the angles at vertices $s_1,s_2,s_3$ by $\sigma_1,\sigma_2,\sigma_3\,,$ respectively.
Then the hyperbolic area of $T$ is given by \cite[p. 150, Thm 7.13.1]{be}
\begin{equation} \label{Tharea}
\mbox{h-area}(T)=\pi-(\sigma_1+ \sigma_2+ \sigma_3)\,.
\end{equation}
It is a basic fact that $\mbox{h-area}(T)$ is invariant under M\"obius transformations
of $ \mathbb{D}\,$ onto itself. Also the capacity of the condenser $( \mathbb{D}, T)$
has the same invariance property.

\begin{nonsec}{\bf Open problem.}\label{HAopen} Given $s_1,s_2,s_3 \in  \mathbb{D}\,,$ find a formula for $ {\rm cap}( \mathbb{D}\,,T)\,.$
 \end{nonsec}

 Motivated by the simple fact that symmetry is often connected with extremal problems,
 we arrived at the following conjecture.

 \begin{nonsec}{\bf Conjecture.}\label{HAconj} 
Let $s_1,s_2,s_3 \in \mathbb{D}\,,$ and let
 $T$ be the hyperbolic triangle with vertices $s_1,s_2,s_3\,.$ If $T_0$ is an equilateral
 hyperbolic triangle with $\mbox{h-area}(T_0)=\mbox{h-area}(T)$, then
\begin{equation}\label{eq:Conjecture}
{\rm cap}( \mathbb{D}\,,T)   \ge {\rm cap}( \mathbb{D}\,,T_0)\,.
\end{equation}
 \end{nonsec}

 By M\"obius invariance we may without loss of generality normalize $T_0$ as follows.
 Write $\omega=(\sigma_1+ \sigma_2+ \sigma_3)/3\,.$ If $A,B,C$ are the lengths of the sides
 opposite to the angles $\sigma_1, \sigma_2, \sigma_3\,,$ resp., then by \cite[p.150, Ex. 7.12(2)]{be} the triangle is equilateral iff $\sigma_1= \sigma_2= \sigma_3=\omega$ and
 \begin{equation}\label{bep150}
 2 \ch(A/2) \sin(\omega/2) =1\,.
 \end{equation}
 We may assume also that $$ s_1=r\,,\, s_2= r e^{i\theta}\,,\, s_3=r e^{i2 \theta}\,,\, \theta = 2\pi/3\,.$$
 By \cite[p. 40]{be} we have
 $$ \sh^2 (A/2) = \left(\frac{r \sqrt{3}}{1-r^2}\right)^2 = \frac{1}{4 \sin^2(\omega/2)} -1$$
 and solving this for $r^2$ we obtain
 \begin{equation}\label{eq:hyp-tri-r}
 r^2 =\frac{2 \cos \omega - \sqrt{3} \sin \omega}{2 \cos \omega -1} \,.
 \end{equation}

 These observations show that given a hyperbolic triangle $T$ with angles $\sigma_1, \sigma_2, \sigma_3\,,$ there is a hyperbolic triangle $T_0$ with vertices
 $$ r\,,\,   r e^{i \theta}\,,\,  r e^{i 2\theta}\,,\, \theta = 2\pi/3 $$
 where $r^2$ is given by~\eqref{eq:hyp-tri-r} with 
 $\omega = (\sigma_1+ \sigma_2+ \sigma_3)/3$.
Both hyperbolic triangles $T$ and $T_0$ have the same hyperbolic area.

We compute $\capa(\D,T)$ numerically using the MATLAB function \verb|annq| with  $n=3\times10^{12}$ where the domain $D$ is the bounded doubly connected domain in the interior of the unit circle and in the exterior of the triangle (see Figure~\ref{fig:hyp_tri} where the auxiliary points $\alpha$ and $z_2$ in \verb|annq| are shown as the star and the dot, respectively). The values of $\capa(\D,T_0)$ are computed similarly.
The approximate values of the capacities $\capa(\D,T)$ and $\capa(\D,T_0)$ for several values of $s_1$, $s_2$, and $s_3$ are presented in Table~\ref{tab:hyp_tri}.
The presented numerical results validate the conjectural inequality~\eqref{eq:Conjecture}. Numerical experiments for several other values $s_1$, $s_2$, and $s_3$ (not presented here) also validate the conjectural inequality~\eqref{eq:Conjecture}.

\begin{figure}[hb]
\centerline{\hfill
\scalebox{0.4}{\includegraphics[trim=0cm 0.75cm 0cm 0.25cm,clip]{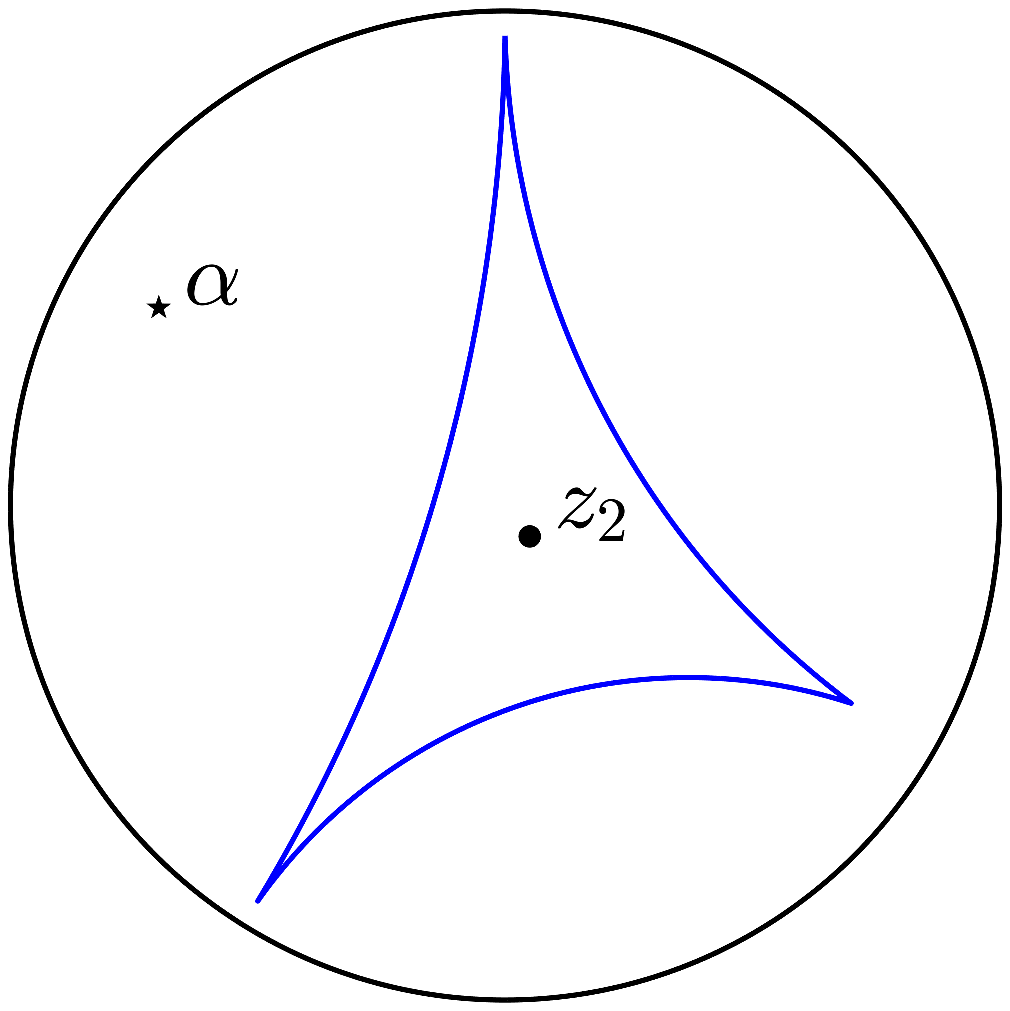}}\hfill
\scalebox{0.4}{\includegraphics[trim=0cm 0.75cm 0cm 0.25cm,clip]{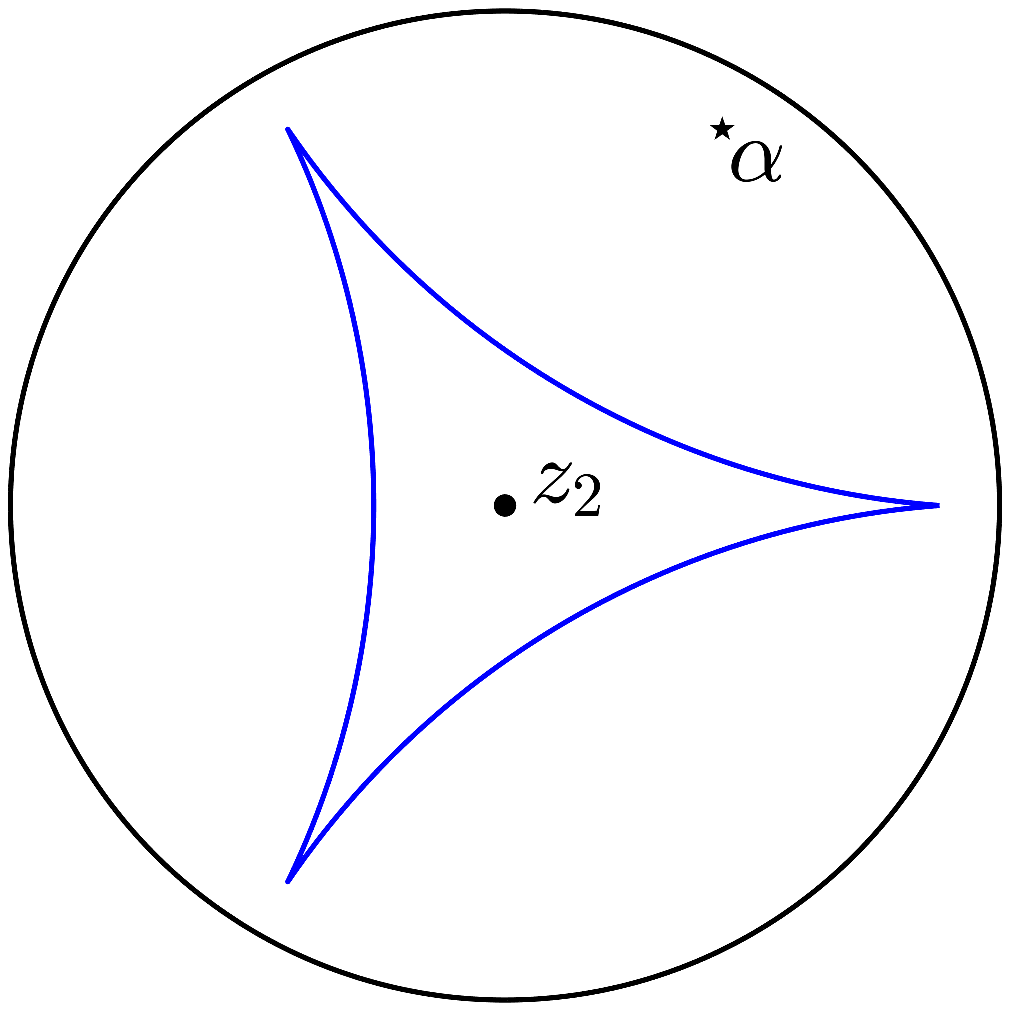}}\hfill}
\caption{The domain $D$ for computing $\capa(\D,T)$ (left) and the domain 
$D_0$ for computing $\capa(\D,T_0)$ (right) for $s_1=0.95\i$, $s_2=0.7-0.4\i$, $s_3=-0.5-0.8\i$.}
\label{fig:hyp_tri}
\end{figure}

\begin{table}[hbt]
\caption{The values of $\capa(\D,T)$ and $\capa(\D,T_0)$.}
\label{tab:hyp_tri}%
\begin{tabular}{l|l|l|l|l}\hline
$s_1$      & $s_2$            & $s_3$      & $\capa(\D,T)$       & $\capa(\D,T_0)$    \\ \hline
$0.6$    & $0.2-0.5\i$    &$-0.3-0.5\i$    & $5.61438997196548$  & $4.96507462804135$ \\
$0.9$    & $0.2-0.5\i$    &$-0.3-0.5\i$    & $7.57256635825877$  & $5.39880575287883$ \\
$0.3\i$  & $0.3-0.5\i$    &$-0.3-0.5\i$    & $5.63768713031744$  & $5.60191869448996$ \\ \hline
$0.5\i$  & $0.25-0.4\i$   &$-0.25-0.4\i$   & $5.52754816211627$  & $5.21348957109432$ \\
$0.9\i$  & $0.78-0.45\i$  &$-0.78-0.45\i$  & $13.2881689301735$  & $13.2881521954927$ \\
$0.95\i$ & $0.7-0.4\i$    &$-0.5-0.8\i$    & $13.92508317827$    & $12.4763956630121$ \\
\hline
$0.2\i$  & $0.17-0.1\i$   &$-0.17-0.1\i$   & $3.23750018859583$  & $3.23740547036233$ \\
$0.1\i$  & $0.087-0.05\i$ &$-0.087-0.05\i$ & $2.40145519669907$  & $2.40145213607884$ \\
$-0.1\i$ & $0.5-0.5\i$    &$-0.5-0.5\i$    & $5.98941024500545$  & $4.85509874205801$ \\
$-0.1\i$ & $0.7-0.5\i$    &$-0.7-0.5\i$    & $8.25251632029587$  & $4.89997376235771$ \\
\hline %
\end{tabular}
\end{table}

\nonsec{\bf Hyperbolic polygon with $m$ vertices.}
Second, we consider condensers of the form $(\mathbb{D},P)$
where $P$ is a closed hyperbolic polygon with $m$ vertices $\beta_1,\beta_2,\ldots,\beta_m\in\mathbb{D}$ such that $0\in P$. 

The hyperbolic distance between any two points $z,w\in\D$ can be computed by
\eqref{myrho}.
Thus, the perimeter of the hyperbolic polygon $P$ is
\[
L=\sum_{k=1}^{m}\rho_{\mathbb{D}}(\beta_k,\beta_{k+1})
\]
where $\beta_{m+1}=\beta_1$. Let $P_0$ be the hyperbolic polygon centered at $0$ and the hyperbolic length of all of its sides are equal to $L/m$. Then $P$ and $P_0$ have the same hyperbolic perimeter $L$. Assume that the vertices of the hyperbolic polygon $P_0$ are
\begin{equation}\label{eq:hyp-ver-vk}
v_k=r\,\exp\left(\frac{2\pi k\i}{m}\right), \quad k=0,1,2,\ldots,m.
\end{equation}
Define $v_{m+1}=v_1$, then the perimeter of the hyperbolic polygon $P_0$ is
\[
L=\sum_{k=1}^{m}\rho_{\mathbb{D}}(v_k,v_{k+1})
=\sum_{k=1}^{m}2\,\arsh
\left(\frac{|v_k-v_{k+1}|}{\sqrt{1-|v_k|^2}\sqrt{1-|v_{k+1}|^2}}\right),
\]
which, in view of~\eqref{eq:hyp-ver-vk}, can be written as
\[
L=\sum_{k=1}^{m}2\,\arsh\left(\frac{r|1-\exp\left(\frac{2\pi\i}{m}\right)|}{1-r^2}\right)
=2m\,\arsh\left(\frac{2r\sin\frac{\pi}{m}}{1-r^2}\right).
\]
Then, $r$ can be computed through
\[
r=\frac{-\sin\frac{\pi}{m}+\sqrt{\sin^2\frac{\pi}{m}+\sh^2\frac{L}{2m}}}{\sh\frac{L}{2m}}
\]

\begin{table}[hbt]
\caption{The values of $\capa(\D,P)$ and $\capa(\D,P_0)$.}
\label{tab:hyp_pol}%
\begin{tabular}{l|l|l|l}\hline
$m$  & $\beta_j,\; j=1,2,\ldots,m$ & $\capa(\D,P)$       & $\capa(\D,P_0)$ \\ \hline
$3$  & $0.6,0.1-0.8\i,-0.5+0.6\i$  & $9.0274303701827$   & $9.07270475215184$ \\[0.2cm]
\hline
$4$  & $0.601,-0.6\i,-0.599,0.6\i$ & $8.3279404581868$  & $8.32794231176445$ \\[0.2cm]
\hline
$5$  & $0.6,0.1-0.8\i,-0.5-0.5\i,-0.5+0.6\i,$ & $11.9589944965738$  & $12.0640771315217$ \\
     & $0.5+0.5\i$      &                     &                    \\[0.2cm]
\hline     
$6$  & $0.6,0.1-0.8\i,-0.5-0.5\i,-0.8,$ & $13.5302396750603$  & $13.6288953941389$ \\
     & $-0.5+0.6\i,0.5+0.5\i$ &                     &                    \\[0.2cm]
\hline
$7$  & $0.6,0.1-0.8\i,-0.5-0.5\i,-0.8,$ & $15.9302933204252$  & $16.0808062702908$ \\
     & $-0.5+0.6\i,0.9\i,0.5+0.5\i$    &                     &                    \\[0.2cm]
\hline
$8$  & $0.6,0.5-0.5\i,0.1-0.8\i,-0.5-0.5\i,-0.8,$  & $16.7814228075178$  & $16.9697317440523$ \\
     & $-0.5+0.6\i,0.9\i,0.5+0.5\i$ &                   &                    \\[0.2cm]
\hline
$12$ & $0.7+0.2\i,0.7-0.2\i,0.4-0.5\i,-0.8\i,$ & $20.8062404526496$  & $21.0023784573094$ \\
     & $-0.4-0.7\i,-0.7-0.4\i,-0.8,-0.7+0.3\i,$ &                     &                    \\
     & $-0.4+0.7\i,0.9\i,0.3+0.8\i,0.5+0.5\i$ &                     &         \\
\hline %
\end{tabular}
\end{table}

 \begin{nonsec}{\bf Conjecture.}\label{HAconj2} For the above two hyperbolic polygons $ P$ and $P_0$,
\begin{equation}\label{eq:Conjecture2}
{\rm cap}( \mathbb{D}\,,P)   \le {\rm cap}( \mathbb{D}\,,P_0)\,.
\end{equation}
 \end{nonsec}

The MATLAB function \verb|annq| with $n=m\times10^{12}$ is used to compute approximate values of $\capa(\D,P)$ and $\capa(\D,P_0)$ where the domain $D$ is the bounded doubly connected domain in the interior of the unit circle and in the exterior of the hyperbolic polygon (see Figure~\ref{fig:hyp_pol} where the auxiliary points $\alpha$ and $z_2$ in \verb|annq| are shown as the star and the dot, respectively). The computed approximate values for several values of $m$ and $\beta_1,\beta_2,\ldots,\beta_m$ are presented in Table~\ref{tab:hyp_pol}. These numerical results validate the inequality~\eqref{eq:Conjecture2}. 
Numerical experiments for several other values $m$ and $\beta_1,\beta_2,\ldots,\beta_m\in\mathbb{D}$ (not presented here) also validate the conjecture inequality~\eqref{eq:Conjecture2}.

Note that in our experiments we have used hyperbolic polygons starlike with respect to~$0$: in other words, each radius intersects the polygonal curve exactly at one point.

Finally, Figure~\ref{fig:hyp-pol-r} and Table~\ref{tab:hyp_pol_rm} present approximate  values of the capacity of the hyperbolic polygon $P_0$ for several values of $m$ and $r$. For $m=3$, the polygon $P_0$ is an equilateral hyperbolic triangle.

\begin{figure}[hbt] %
\centerline{
\hfill\scalebox{0.4}{\includegraphics[trim=0cm 0.75cm 0cm 0.25cm,clip]{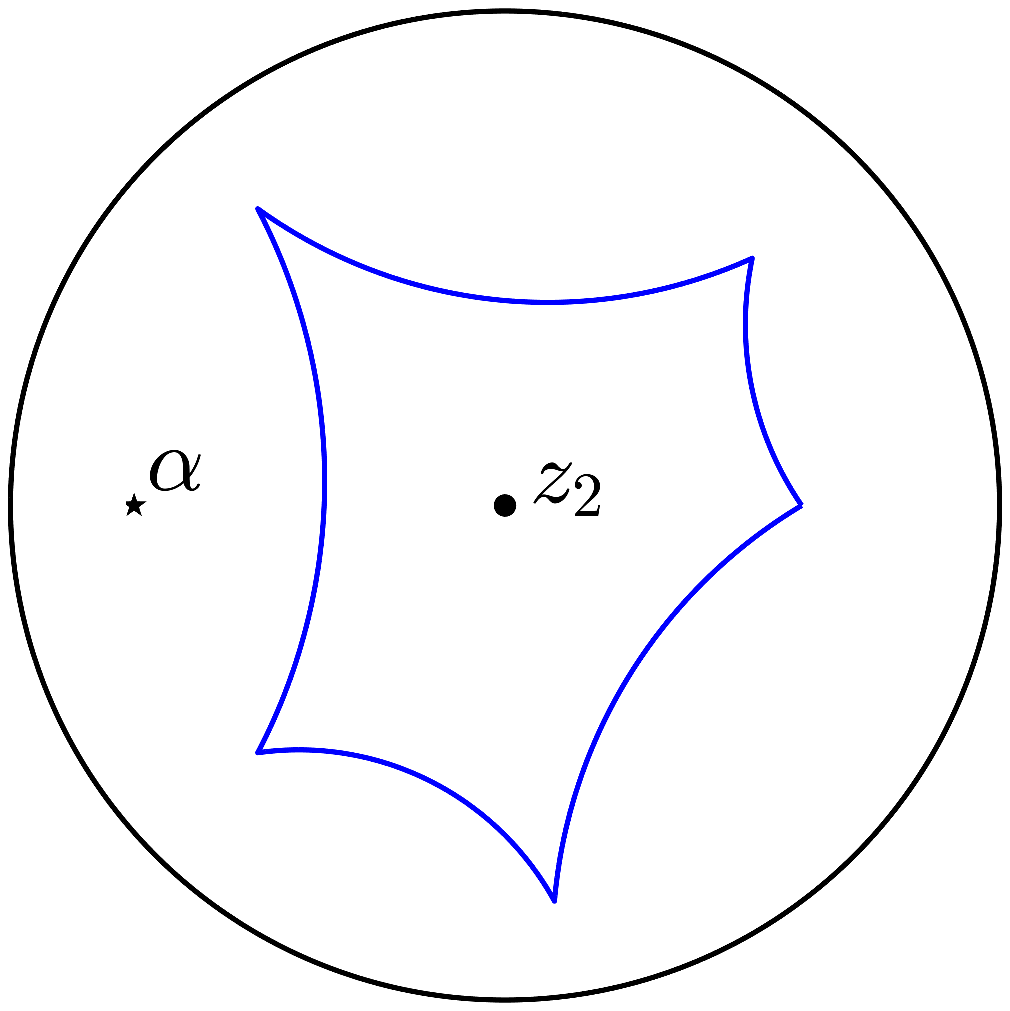}}
\hfill\scalebox{0.4}{\includegraphics[trim=0cm 0.75cm 0cm 0.25cm,clip]{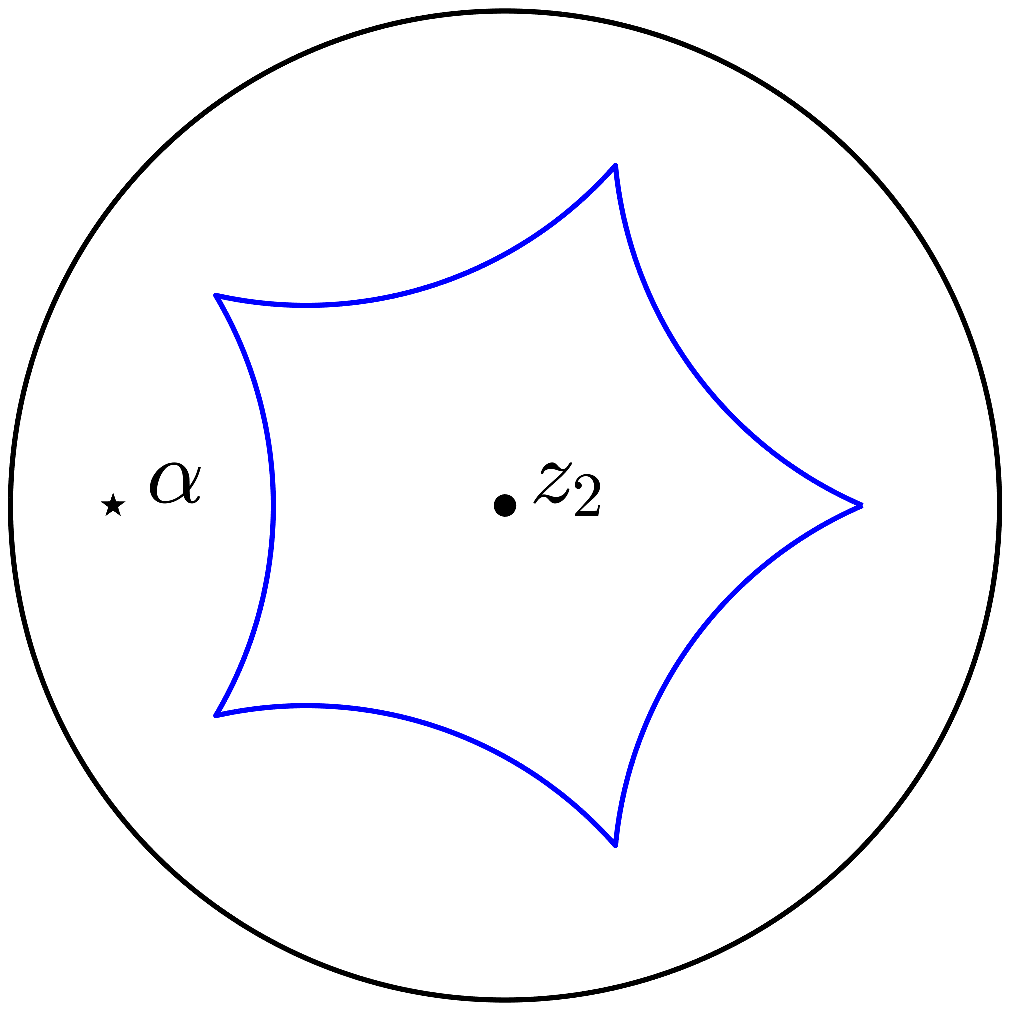}}
\hfill}
\caption{The domain $D$ for computing $\capa(\D,P)$ (left) and the domain $G_0$ for computing $\capa(\D,P_0)$ (right) for $\beta_1=0.5$, $\beta_2=0.6-0.6\i$, $\beta_3=-0.6-0.4\i$, $\beta_4=-0.3+0.6\i$, $\beta_5=0.2+0.5\i$.}
\label{fig:hyp_pol}
\end{figure}

\begin{figure}[hbt] %
\centerline{
\scalebox{0.5}{\includegraphics[trim=0cm 0cm 0cm 0cm,clip]{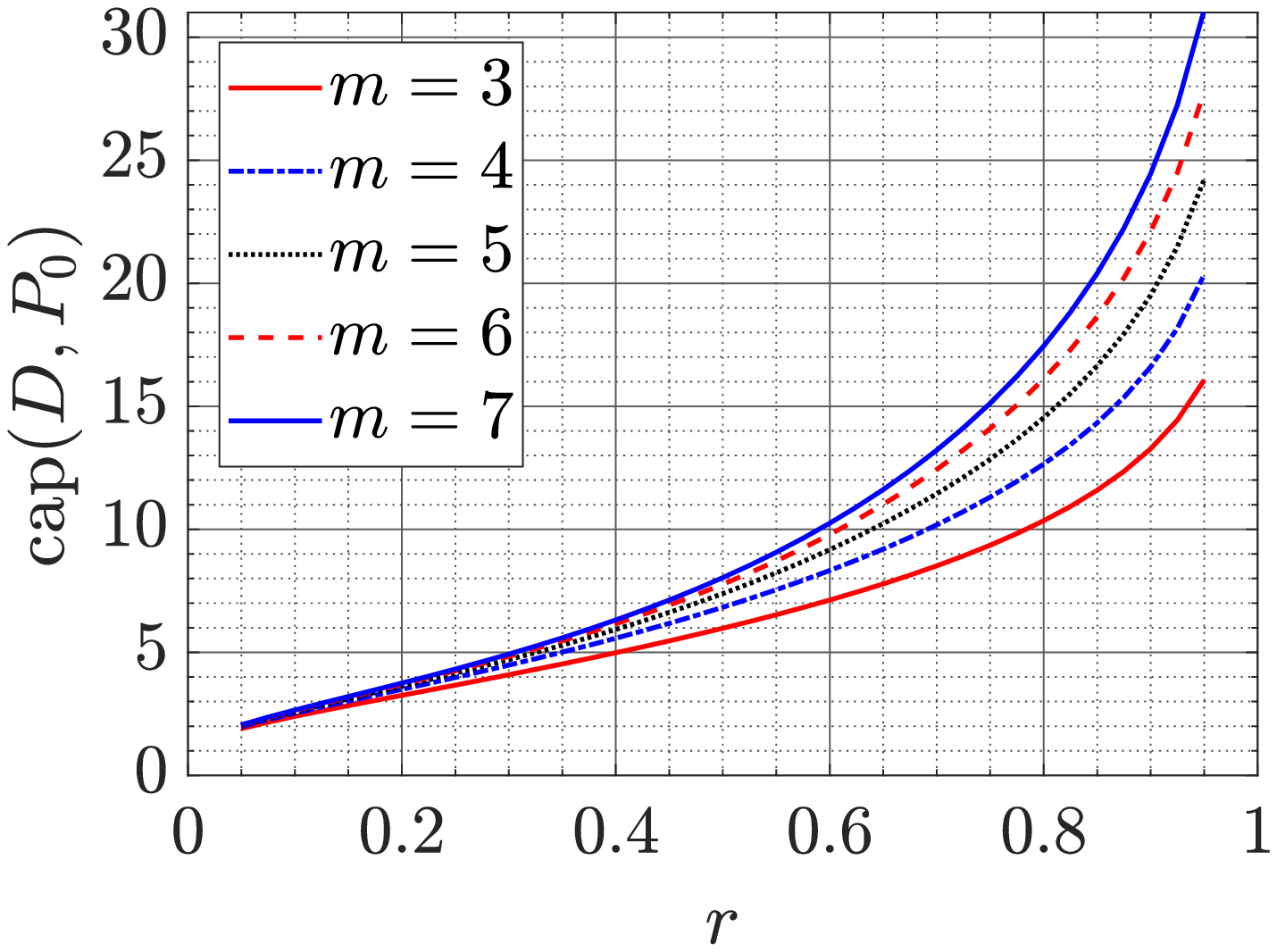}}
}
\caption{The values of $\capa(\D,P_0)$ vs. $r$ for $0.05\le r\le0.95$.}
\label{fig:hyp-pol-r}
\end{figure}

\begin{table}[hbt]
\caption{The values of $\capa(\D,P_0)$ for several values of $m$ and $r$.}
\label{tab:hyp_pol_rm}%
\begin{tabular}{l|l|l|l|l|l}\hline
$r\backslash m$ & $3$ & $4$           & $5$           & $6$           & $7$    \\ \hline
$0.1$ &$2.3993612914$ &$2.5281340146$ &$2.5942462887$ &$2.6324787004$ &$2.6565098093$ \\
$0.2$ &$3.2528141529$ &$3.4946167264$ &$3.6244949167$ &$3.7016779579$ &$3.7510464324$ \\
$0.3$ &$4.0913694284$ &$4.4816138033$ &$4.7030688851$ &$4.8395121547$ &$4.9289424320$ \\
\hline
$0.4$ &$4.9856760383$ &$5.5743497987$ &$5.9308261981$ &$6.1608778512$ &$6.3167666667$ \\
$0.5$ &$5.9799062371$ &$6.8325631892$ &$7.3878902352$ &$7.7670978659$ &$8.0354723585$ \\
$0.6$ &$7.1266240809$ &$8.3279319407$ &$9.1730250087$ &$9.7887982158$ &$10.248675793$ \\
\hline
$0.7$ &$8.5161561610$ &$10.180067164$ &$11.444185389$ &$12.431726677$ &$13.216542846$ \\
$0.8$ &$10.349853454$ &$12.653202360$ &$14.534982854$ &$16.110376899$ &$17.447408861$ \\
$0.9$ &$13.274319210$ &$16.602537086$ &$19.510028327$ &$22.105923933$ &$24.452171599$ \\
\hline %
\end{tabular}
\end{table}


\section{Hyperbolic disks}

In this section we consider disjoint hyperbolic disks $\overline{B}_\rho(x_j,L_j)$ in $\D$ and denote
\[
\overline{B}=\cup_{j=1}^{p} \overline{B}_\rho(x_j,L_j).
\]
By subadditivity of the capacity \cite[Lemma 7.1(3), Thm 9.6]{hkv}, we have
\begin{equation}\label{eq:hyb-cap1}
\capa(\D,\overline{B})\le\sum_{j=1}^p\capa(\D,\overline{B}_\rho(x_j,L_j)).
\end{equation}
We study here whether the set $B$ on the left-hand side of this inequality can be replaced by 
a hyperbolic disk 
under two constraints:
\begin{itemize}
\item[(1)] {\it Isoarea problem}: 
The set $B$ is replaced by a hyperbolic disk $B_\rho(0,L)$ such that the hyperbolic area of $B_\rho(0,L)$ is equal to the sum of the hyperbolic areas of the disks $B_\rho(x_j,L_j)$, i.e.,
\begin{equation}\label{eq:h-area1}
\mbox{h-area}(B_\rho(0,L))=\sum_{j=1}^p \mbox{h-area}(B_\rho(x_j,L_j)).
\end{equation}

\item[(2)] {\it Isoperimetric problem}: 
The set $B$ is replaced by a hyperbolic disk $B_\rho(0,\hat L)$ such that the hyperbolic perimeter of $B_\rho(0,\hat L)$ is equal to the sum of the hyperbolic perimeters of the disks $B_\rho(x_j,L_j)$, i.e.,
\begin{equation}\label{eq:h-preim1}
\mbox{h-perim}(B_\rho(0,\hat L))=\sum_{j=1}^p \mbox{h-perim}(B_\rho(x_j,L_j))
\end{equation}
\end{itemize}

By  \eqref{hypDat0} and \eqref{anncap} 
\begin{equation}\label{eq:hyb-cap-Lj}
\capa(\D,B_\rho(x_j,L_j))=\frac{2\pi}{-\log \th(L_j/2)} \,.
\end{equation}
Similarly, 
\begin{eqnarray}
\label{eq:hyb-cap-L}
\capa(\D,B_\rho(0,L))&=&\frac{2\pi}{-\log \th(L/2)}, \\
\label{eq:hyb-cap-hL}
\capa(\D,B_\rho(0,\hat L))&=&\frac{2\pi}{-\log \th(\hat L/2)} \,.
\end{eqnarray}

By~\cite[Theorem~7.2.2, p. 132]{be}, 
\[
\mbox{h-area}(B_\rho(0,L))=4\pi \,\sh^2\left(\frac{L}{2}\right), \quad
\mbox{h-area}(B_\rho(0,L_j))=4\pi\,\sh^2\left(\frac{L_j}{2}\right),
\]
and 
\[
\mbox{h-perim}(B_\rho(0,\hat L))=2\pi\sh\left(\hat L\right), \quad
\mbox{h-perim}(B_\rho(0,L_j))=2\pi\sh\left(L_j\right),
\]
for $j=1,2,\ldots,p$.
Thus, it follows from~\eqref{eq:h-area1} that the constant $L$ is related to the constants $L_1,L_2,\ldots,L_p$ by 
\begin{equation}\label{eq:hyb-area2}
\sh^2\left(\frac{L}{2}\right)=\sum_{j=1}^p\,\sh^2\left(\frac{L_j}{2}\right),
\end{equation}
Similarly, it follows from~\eqref{eq:h-preim1} that the constant $\hat L$ is related to the constants $L_1,L_2,\ldots,L_p$ by
\begin{equation}\label{eq:h-preim2}
\sh(\hat L)=\sum_{j=1}^p\sh\left(L_j\right).
\end{equation}

\begin{lem}\label{Lem:hL-L}
Let $L$ be defined by~\eqref{eq:hyb-area2} and $\hat L$ be defined by~\eqref{eq:h-preim2}, then $\hat L>L$.
\end{lem}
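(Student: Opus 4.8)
The plan is to reduce the statement to a one–variable strict subadditivity inequality. Since $\sh$ is strictly increasing on $[0,\infty)$ and both $L$ and $\hat L$ are positive, it suffices to show $\sh(\hat L)>\sh(L)$. Put $u_j=\sh^2(L_j/2)>0$ for $j=1,\dots,p$ and $U=\sum_{j=1}^{p}u_j$; by \eqref{eq:hyb-area2} we have $U=\sh^2(L/2)$. Using the identities $\sh t=2\sh(t/2)\ch(t/2)$ and $\ch^2(t/2)=1+\sh^2(t/2)$, one gets $\sh L=2\sqrt{U(1+U)}$ and $\sh L_j=2\sqrt{u_j(1+u_j)}$. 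Hence, in view of \eqref{eq:h-preim2}, the inequality $\sh(\hat L)>\sh(L)$ is equivalent to
\[
g\Bigl(\sum_{j=1}^{p}u_j\Bigr)<\sum_{j=1}^{p}g(u_j),\qquad\text{where}\quad g(u)=2\sqrt{u(1+u)},\ \ u\ge 0 .
\]

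Next I would establish the strict subadditivity of $g$. First, $g(0)=0$. Second, $g$ is strictly concave on $(0,\infty)$: a short computation gives $g'(u)=(1+2u)/\sqrt{u+u^2}$, so $g'(u)^2=4+1/(u+u^2)$, which is strictly decreasing, hence $g'$ is strictly decreasing. (The tangent of $g$ at $u=0$ is vertical, but this plays no role below.) Assuming $p\ge 2$, so that $0<u_j<U$ for every $j$, strict concavity together with $g(0)=0$ gives, writing $u_j$ as the convex combination $\tfrac{u_j}{U}\,U+(1-\tfrac{u_j}{U})\cdot 0$,
\[
g(u_j)>\frac{u_j}{U}\,g(U)+\Bigl(1-\frac{u_j}{U}\Bigr)g(0)=\frac{u_j}{U}\,g(U).
\]
Summing over $j=1,\dots,p$ and using $\sum_{j=1}^{p}u_j=U$ yields $\sum_{j=1}^{p}g(u_j)>g(U)$, which is exactly the displayed inequality; therefore $\sh(\hat L)>\sh(L)$ and hence $\hat L>L$. (If $p=1$ then trivially $L=L_1=\hat L$, so strictness genuinely uses $p\ge 2$, which is implicit in the set-up $\overline B=\cup_{j=1}^p\overline B_\rho(x_j,L_j)$ with all $L_j>0$.)

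The only mildly technical point is the verification that $g$ is strictly concave together with the harmless singular behaviour at $u=0$; everything else is bookkeeping. If one prefers to avoid calculus, the same conclusion follows by squaring the equivalent inequality $\sum_j\sqrt{u_j(1+u_j)}>\sqrt{U(1+U)}$ directly: expanding both sides, the difference of the squares equals $2\sum_{j<k}\sqrt{u_ju_k}\bigl(\sqrt{(1+u_j)(1+u_k)}-\sqrt{u_ju_k}\bigr)$, and each term is positive since $(1+u_j)(1+u_k)=1+u_j+u_k+u_ju_k>u_ju_k$. I expect this elementary route to be the cleanest to write out in full.
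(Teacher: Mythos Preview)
Your argument is correct and considerably simpler than the paper's. The paper proceeds differently: it rewrites the area condition as $\sqrt{\sh^2 L+1}-1=\sum_j(\sqrt{\sh^2 L_j+1}-1)$, then builds an auxiliary function
\[
f(x)=\Bigl(x+\sum_{j}\bigl(\sqrt{\sh^2 L_j+x^2}-x\bigr)\Bigr)^2-x^2,\qquad x\in[0,1],
\]
observes that $f(0)=\sh^2(\hat L)$ and $f(1)=\sh^2(L)$, and shows $f'(x)<0$ on $(0,1)$ by a page-long derivative computation. Your route---recasting the problem as the subadditivity of $g(u)=2\sqrt{u(1+u)}$ via $u_j=\sh^2(L_j/2)$ and then invoking either concavity with $g(0)=0$ or the direct squaring identity---is a genuinely different and more transparent argument; it isolates the single structural fact (concavity of $\sqrt{u+u^2}$) that drives the inequality, whereas the paper's interpolation hides this behind calculus in an extra parameter. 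Your squaring alternative is especially clean and avoids any calculus at all. The caveat you note about $p\ge 2$ is also present, implicitly, in the paper's proof (the step $g(x)-(\sqrt{\sh^2 L_j+x^2}-x)>0$ there likewise fails when $p=1$), so nothing is lost.
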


\begin{proof}
Rewrite the equations~\eqref{eq:hyb-area2} as
\[
\sqrt{\sh^2L+1}-1=\sum_{j=1}^p \left(\sqrt{\sh^2L_j+1}-1\right),
\]
which implies that
\begin{equation}\label{eq:hyb-area2-2}
\sh^2(L)=\left(1+\sum_{j=1}^p \left(\sqrt{\sh^2L_j+1}-1\right) \right)^2-1,
\end{equation}

Define a real function $f(x)$ on the interval $[0,1]$ by
\[
f(x)=\left(x+\sum_{j=1}^p\left(\sqrt{\sh^2L_j+x^2}-x\right)\right)^2-x^2
\]
which can be written as
\begin{equation}\label{eq:hyb-area2-f}
f(x)=2x\,\sum_{j=1}^p\left(\sqrt{\sh^2L_j+x^2}-x\right)+\left(\sum_{j=1}^p\left(\sqrt{\sh^2L_j+x^2}-x\right)\right)^2.
\end{equation}
Then, in view of~\eqref{eq:hyb-area2-2} and~\eqref{eq:h-preim2}, we have $f(0)=\sh^2(\hat L)$ and $f(1)=\sh^2(L)$.

We shall prove that $f(x)$ is decreasing on $[0,1]$. The function $f(x)$ can be written as
\begin{equation}\label{eq:hyb-area2-fg}
f(x)=2x\,g(x)+(g(x))^2
\end{equation}
where the real function $g(x)$ is defined on the interval $[0,1]$ by
\begin{equation}\label{eq:hyb-area2-g}
g(x)=\sum_{j=1}^p\left(\sqrt{\sh^2L_j+x^2}-x\right).
\end{equation}
Hence
\begin{equation}\label{eq:hyb-area2-g2}
g'(x)=\sum_{j=1}^p\left(\frac{x}{\sqrt{\sh^2L_j+x^2}}-1\right)
=-\sum_{j=1}^p\frac{\sqrt{\sh^2L_j+x^2}-x}{\sqrt{\sh^2L_j+x^2}}<0.
\end{equation}
Thus
\begin{eqnarray*}
f'(x) &=& 2g(x)+2xg'(x)+2g(x)g'(x) \\
      &=& 2\sum_{j=1}^p\left(\sqrt{\sh^2L_j+x^2}-x\right)
			   -2x\sum_{j=1}^p\frac{\sqrt{\sh^2L_j+x^2}-x}{\sqrt{\sh^2L_j+x^2}}\\
			& &-2g(x)\sum_{j=1}^p\frac{\sqrt{\sh^2L_j+x^2}-x}{\sqrt{\sh^2L_j+x^2}}\\
      &=& 2\sum_{j=1}^p\left(\sqrt{\sh^2L_j+x^2}-x
			   -x\frac{\sqrt{\sh^2L_j+x^2}-x}{\sqrt{\sh^2L_j+x^2}}
				 -g(x)\frac{\sqrt{\sh^2L_j+x^2}-x}{\sqrt{\sh^2L_j+x^2}}\right),
\end{eqnarray*}
which implies that
\[
f'(x)/2 = \sum_{j=1}^p\left(\left(\sqrt{\sh^2L_j+x^2}-x\right)\left(1-\frac{x}{\sqrt{\sh^2L_j+x^2}}\right)
			-g(x)\frac{\sqrt{\sh^2L_j+x^2}-x}{\sqrt{\sh^2L_j+x^2}}\right)
\]
and further
\begin{equation}\label{eq:hyb-area2-fp}
f'(x)/2=-\sum_{j=1}^p\left(g(x)-\left(\sqrt{\sh^2L_j+x^2}-x\right)\right)\frac{\sqrt{\sh^2L_j+x^2}-x}{\sqrt{\sh^2L_j+x^2}}.
\end{equation}
For all values of $x$ in $(0,1)$, it follows from~\eqref{eq:hyb-area2-g} that $g(x)-\left(\sqrt{\sh^2L_j+x^2}-x\right)>0$ for all indices $j$, and hence~\eqref{eq:hyb-area2-fp} implies that $f'(x)<0$. Thus, $f'(x)$ is decreasing on $[0,1]$ and hence
$\sh^2(\hat L)=f(0)>f(1)=\sh^2(L)$. Since $L,\hat L>0$, the proof follows.
\end{proof}


\begin{thm}\label{thm:hL-Lj}
$\capa(\D,B_\rho(0,\hat L))\le\sum_{j=1}^p\capa(\D,B_\rho(x_j,L_j))$
\end{thm}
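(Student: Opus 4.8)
The plan is to reduce the assertion to a one–disk comparison after replacing the right-hand side by a single hyperbolic disk with the same total perimeter. Concretely, let $\hat L$ be defined by \eqref{eq:h-preim2}, so that $\mbox{h-perim}(B_\rho(0,\hat L))=\sum_{j=1}^p\mbox{h-perim}(B_\rho(0,L_j))$, and recall from \eqref{eq:hyb-cap-hL} that $\capa(\D,B_\rho(0,\hat L))=2\pi/(-\log\th(\hat L/2))$. Since $x\mapsto 2\pi/(-\log\th(x/2))$ is an increasing function of $x>0$ (because $\th(x/2)$ increases toward $1$, so $-\log\th(x/2)$ decreases toward $0$), it suffices to prove the scalar inequality
\[
\frac{2\pi}{-\log\th(\hat L/2)}\le\sum_{j=1}^p\frac{2\pi}{-\log\th(L_j/2)},
\]
i.e., writing $t_j=\th(L_j/2)\in(0,1)$ and $\hat t=\th(\hat L/2)\in(0,1)$,
\begin{equation}\label{eq:toprove}
\frac{1}{-\log\hat t}\le\sum_{j=1}^p\frac{1}{-\log t_j}.
\end{equation}

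Next I would translate the constraint \eqref{eq:h-preim2} into the variables $t_j,\hat t$. Using $\sh L=2\sh(L/2)\ch(L/2)$ together with $\sh(L/2)=t/\sqrt{1-t^2}$ and $\ch(L/2)=1/\sqrt{1-t^2}$ when $t=\th(L/2)$, one gets $\sh L = 2t/(1-t^2)$; hence \eqref{eq:h-preim2} becomes
\[
\frac{2\hat t}{1-\hat t^{\,2}}=\sum_{j=1}^p\frac{2t_j}{1-t_j^{\,2}},
\qquad\text{equivalently}\qquad
\frac{1}{1/\hat t-\hat t}=\sum_{j=1}^p\frac{1}{1/t_j-t_j}.
\]
So with the monotone change of variable $s=1/t-t$ (a decreasing bijection $(0,1)\to(0,\infty)$), the constraint reads simply $1/\hat s=\sum_j 1/s_j$, i.e. the ``parallel-resistor'' relation $\hat s=\big(\sum_j s_j^{-1}\big)^{-1}$, which forces $\hat s<s_j$ for every $j$ and in particular $\hat t>t_j$, consistent with $\hat L>L_j$.

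The core analytic step is then to show that, under the parallel relation on the $s_j$, inequality \eqref{eq:toprove} holds. I would approach this by a convexity/superadditivity argument: define $\varphi(s)=1/(-\log t(s))$ where $t(s)\in(0,1)$ is the inverse of $s=1/t-t$, i.e. $\varphi$ expresses the capacity functional $2\pi\varphi(s)$ of a hyperbolic disk in terms of $s$. The claim \eqref{eq:toprove} is exactly $\varphi\!\big((\sum_j s_j^{-1})^{-1}\big)\le\sum_j\varphi(s_j)$. Equivalently, setting $u_j=1/s_j$ and $\psi(u)=\varphi(1/u)$, we must show $\psi\!\big(\sum_j u_j\big)\le\sum_j\psi(u_j)$, i.e. that $\psi$ is \emph{subadditive} on $(0,\infty)$. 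A clean sufficient condition is that $\psi(0^+)=0$ (which holds, since $u\to0^+$ means $s\to\infty$, $t\to0^+$, $-\log t\to\infty$, so $\psi\to0$) together with $\psi$ being concave, or more weakly that $u\mapsto\psi(u)/u$ is non-increasing; either would give subadditivity. I expect the main obstacle to be precisely this monotonicity/concavity verification: $\psi(u)=1/(-\log t(u))$ with $t(u)=\frac{-u+\sqrt{u^2+4}}{2}$ leads, after differentiation, to an inequality about elementary functions that I anticipate reduces to something like $-\log t \cdot \frac{dt}{du}\cdot\frac{u}{t}\le 1$ (or its concavity analogue), and while these are one-variable calculus facts they may require careful handling of the logarithm near $t=1$ and $t=0$, possibly via the auxiliary substitution $t=e^{-w}$ and known monotonicity properties of $w/(\sinh w)$-type expressions. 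If a clean concavity proof proves elusive, the fallback is to prove subadditivity directly for two terms (the general case following by induction), reducing to showing $\psi(u_1+u_2)\le\psi(u_1)+\psi(u_2)$, which by homogeneity-type normalization becomes a single-variable inequality amenable to the same elementary estimates; combining this with Lemma~\ref{Lem:hL-L} (which already records $\hat L>L$) and the subadditivity \eqref{eq:hyb-cap1} for the isoarea disk would also situate the result in context, though only the perimeter comparison is needed here.
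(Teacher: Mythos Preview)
Your reduction is exactly the paper's: the constraint $\sh\hat L=\sum_j\sh L_j$ together with the sufficient criterion ``$\psi(0^+)=0$ and $\psi(u)/u$ non-increasing $\Rightarrow$ $\psi$ subadditive'' is precisely what the paper uses (citing \cite[7.42(1)]{avv}); your $\psi$ and the paper's $f(x)=2\pi/\arsh(1/x)$ agree up to the linear change $x=2u=\sh L$.

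The gap is that you stop at ``I expect the main obstacle to be precisely this monotonicity/concavity verification'' without carrying it out, and your proposed route through $t(u)$ (whose displayed formula is actually $t(s)$, not $t(u)$) is unnecessarily tangled. The paper closes the gap in two short steps. First, the identity
\[
-\log\th\!\frac{L}{2}
=\log\frac{\sqrt{\sh^2L+1}+1}{\sh L}
=\arsh\!\left(\frac{1}{\sh L}\right)
\]
turns the capacity into $f(x)=2\pi/\arsh(1/x)$ with $x=\sh L$, so the target is simply $f\big(\sum_j x_j\big)\le\sum_j f(x_j)$. Second, $f(x)/x$ decreasing is equivalent (substitute $t=1/x$) to $g(t)=2\pi t/\arsh t$ increasing on $(0,\infty)$; since $g_1(t)=2\pi t$ and $g_2(t)=\arsh t$ both vanish at $0$ and $g_1'(t)/g_2'(t)=2\pi\sqrt{1+t^2}$ is increasing, the L'H\^opital monotone rule \cite[Theorem~1.25]{avv} gives the result immediately. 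No concavity argument or two-term induction is needed once the $\arsh$ identity is in hand.
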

\begin{proof}
In view of~\eqref{eq:hyb-cap-Lj} and~\eqref{eq:hyb-cap-hL}, to prove the theorem, we need to prove that
\begin{equation}\label{eq:h-preim4}
\frac{2\pi}{-\log\th(\hat L/2)}\le\sum_{j=1}^p\frac{2\pi}{-\log\th(L_j/2)}\,.
\end{equation}

Note that
\begin{eqnarray*}
\frac{2\pi}{-\log\th(t/2)} 
&=& \frac{2\pi}{-\log\frac{\sh(t/2)}{\ch(t/2)}}
=\frac{2\pi}{-\log\frac{\sh(t)}{\sqrt{\sh^2(t)+1}+1}}
=\frac{2\pi}{\log\frac{\sqrt{\sh^2(t)+1}+1}{\sh(t)}} \\
&=&\frac{2\pi}{\log\left(\sqrt{1+\frac{1}{\sh^2(t)}}+\frac{1}{\sh(t)}\right)}
=\frac{2\pi}{\arsh\left(\frac{1}{\sh(t)}\right)}
\end{eqnarray*}
Thus, equation~\eqref{eq:h-preim4} can be written as
\begin{equation}\label{eq:h-preim5}
\frac{2\pi}{\arsh\left(\frac{1}{\sum_{j=1}^p\sh(L_j)}\right)}
\le\sum_{j=1}^p\frac{2\pi}{\arsh\left(\frac{1}{\sh(L_j)}\right)}.
\end{equation}
Thus, we need to prove that
\begin{equation}\label{eq:h-preim6}
f\left(\sum_{j=1}^p \sh(L_j) \right)
\le \sum_{j=1}^p f\left(\sh(L_j)\right)
\end{equation}
where the function $f\,:\,[0,\infty)\,\to\,[0,\infty)$ is defined by
\[
f(t)=\frac{2\pi}{\arsh\left(\frac{1}{\;t\;}\right)},
\]
for $t\in(0,\infty)$ and
\[
f(0)=\lim_{t\to0^+}f(t)=0.
\]
Since
\[
f'(t) = \frac{2\pi}{t\sqrt{t^2+1}\arsh^2\left(\frac{1}{\;t\;}\right)}>0
\]
for $t\in(0,\infty)$, the function $f(t)$ is strictly increasing.
Thus, by~\cite[7.42(1)]{avv}, to prove~\eqref{eq:h-preim6}, it is enough to show that $f(t)/t$ is deceasing on $(0,\infty)$, which in turn is equivalent to showing that the function
\[
g(t)=\frac{f(1/t)}{1/t}
\]
is increasing on $(0,\infty)$. The function $g(t)$ can be written as
\[
g(t)=\frac{2\pi\,t}{\arsh(t)} = \frac{g_1(t)}{g_2(t)}
\]
where $g_1(t)=2\pi t$ and $g_2(t)=\arsh(t)$. 
Since $g_1(0)=0$, $g_2(0)=0$, and the function
\[
\hat g(t) = \frac{g'_1(t)}{g'_2(t)} = 2\pi\sqrt{1+t^2}
\]
is increasing on $(0,\infty)$, then it follows from~\cite[Theorem~1.25]{avv} that the function $g(t)$ is increasing on $(0,\infty)$. This completes the proof of the theorem.
\end{proof}


\begin{thm}
$\capa(\D,B_\rho(0,L))\le\sum_{j=1}^p\capa(\D,B_\rho(x_j,L_j))$
\end{thm}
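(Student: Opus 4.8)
The plan is to obtain this statement as an immediate consequence of the two preceding results by inserting the intermediate radius $\hat L$. Recall that $\hat L$ is fixed by the isoperimetric relation \eqref{eq:h-preim2} and $L$ by the isoarea relation \eqref{eq:hyb-area2}, and that Lemma~\ref{Lem:hL-L} asserts $\hat L>L$. The only additional ingredient needed is the monotonicity of the one-variable functional $M\mapsto\capa(\D,B_\rho(0,M))$.

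First I would record that monotonicity. By \eqref{eq:hyb-cap-L} we have $\capa(\D,B_\rho(0,M))=2\pi/(-\log\th(M/2))$; since $t\mapsto\th(t)$ is strictly increasing on $(0,\infty)$ with values in $(0,1)$, the quantity $-\log\th(M/2)$ is positive and strictly decreasing in $M$, so $M\mapsto\capa(\D,B_\rho(0,M))$ is strictly increasing. Alternatively, $L<\hat L$ gives the inclusion $B_\rho(0,L)\subset B_\rho(0,\hat L)$, and the variational definition of $\capa$ from the Introduction shows directly that the capacity is monotone under inclusion of the inner set; either route yields the same conclusion.

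The result then follows in one line. By Lemma~\ref{Lem:hL-L} together with the monotonicity just noted,
\[
\capa(\D,B_\rho(0,L))\le\capa(\D,B_\rho(0,\hat L)),
\]
and Theorem~\ref{thm:hL-Lj} bounds the right-hand side by $\sum_{j=1}^p\capa(\D,B_\rho(x_j,L_j))$, which is exactly the claimed inequality.

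There is no genuine obstacle here: all the analytic work has already been carried out, in Lemma~\ref{Lem:hL-L} and Theorem~\ref{thm:hL-Lj}. I would close with the remark that this statement is the isoarea analogue of Theorem~\ref{thm:hL-Lj} and, like it, refines the trivial subadditivity bound \eqref{eq:hyb-cap1}: replacing the union $\overline{B}$ by the single equal-area disk $B_\rho(0,L)$ can only decrease the capacity below $\sum_{j=1}^p\capa(\D,B_\rho(x_j,L_j))$.
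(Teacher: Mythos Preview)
Your proposal is correct and follows essentially the same route as the paper: use Lemma~\ref{Lem:hL-L} to get $L<\hat L$, invoke the monotonicity of $M\mapsto 2\pi/(-\log\th(M/2))$ to pass from $L$ to $\hat L$, and then apply Theorem~\ref{thm:hL-Lj}. The paper's own proof is terser (and in fact has the cross-references to the Lemma and Theorem accidentally swapped), but the argument is identical.
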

\begin{proof}
By Theorem~\ref{thm:hL-Lj}, $L<\hat L$. Then, in view of Lemma~\ref{Lem:hL-L}, we have
\[
\frac{2\pi}{-\log\th(L/2)}\le
\frac{2\pi}{-\log\th(\hat L/2)}\le
\sum_{j=1}^p\frac{2\pi}{-\log\th(L_j/2)}.
\]
Then the proof follows from~\eqref{eq:hyb-cap-Lj} and~\eqref{eq:hyb-cap-L}.
\end{proof}

\section{The dependence of the capacity on the number of vertices}

Let $P_m$ be an equilateral regular hyperbolic polygon with $m$ vertices (see Figure~\ref{fig:Tm-c}). We fix a constant $c>0$, then we consider the sequence $P_m, m=3,4,5,\dots$ with all polygons having the hyperbolic area $c\,.$ 
We consider also the same sequence under the constraint that the perimeters of $P_m$ are equal to $c\,.$ 
For both cases, we show by experiments that the sequence $\capa(\D, P_m)$ is monotone.

\nonsec{\bf Hyperbolic area.} 
Assume $0<c<\pi$ and consider the equilateral regular hyperbolic polygon $P_m$ such that
\[
\mbox{h-area}(P_m)=c, \quad {\rm for \; all}\; m=3,4,\ldots\,,
\]
see Figure~\ref{fig:Tm-c} (left) for $c=3$ and $m=3,5,7$. 

\begin{figure}[hbt] %
\centerline{
\hfill
\scalebox{0.5}{\includegraphics[trim=0 0 0 0,clip]{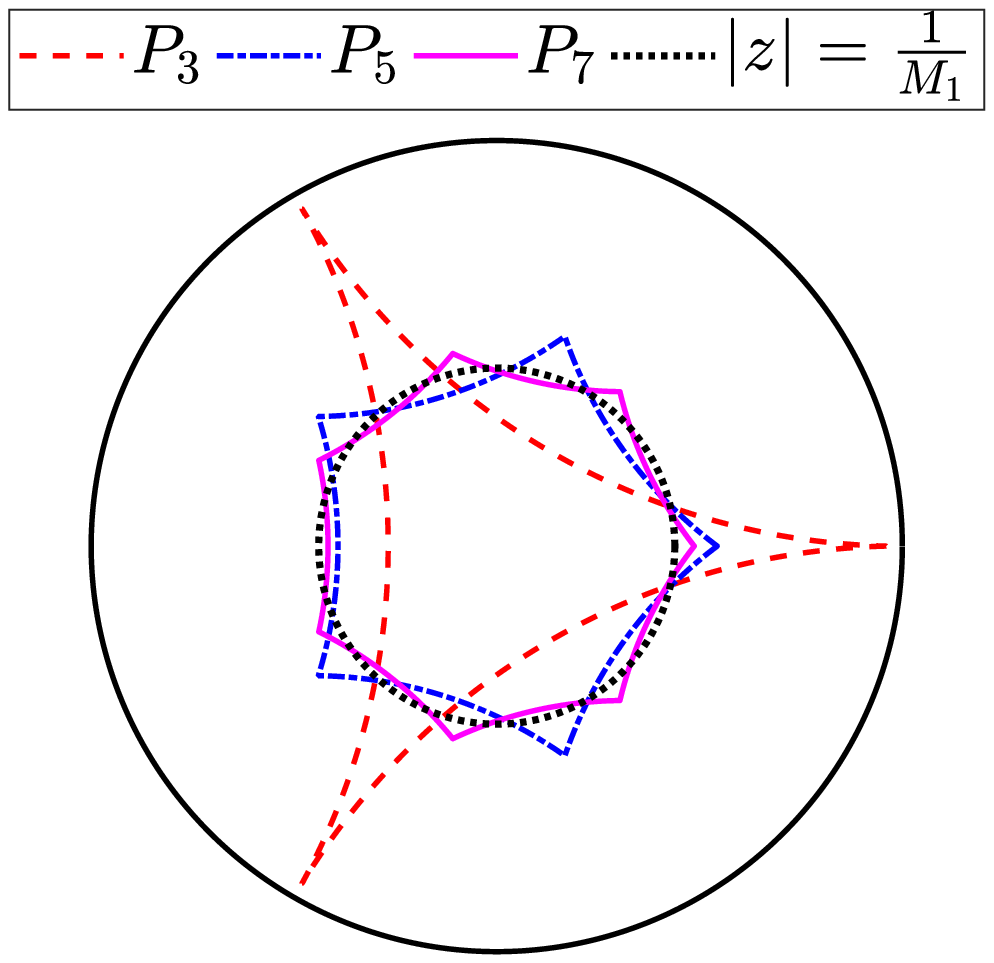}}
\hfill
\scalebox{0.5}{\includegraphics[trim=0 0 0 0,clip]{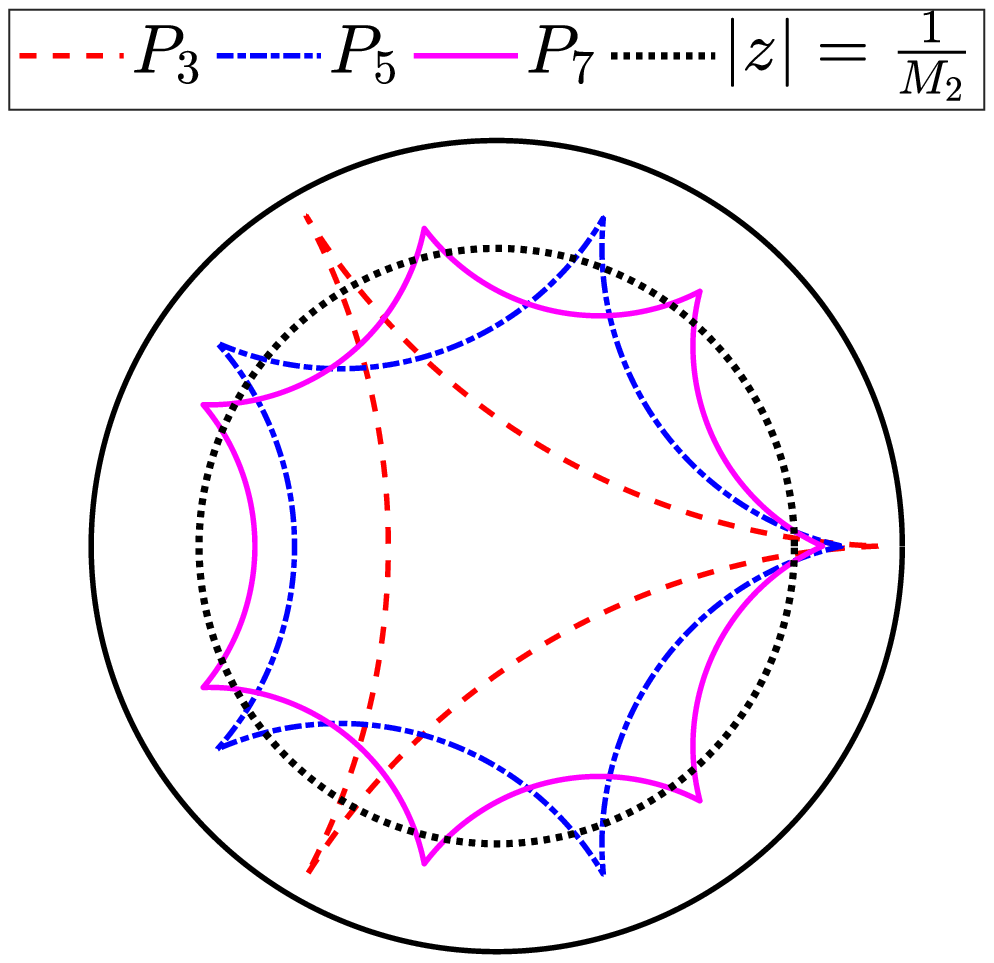}}
\hfill
}
\caption{On the left, the equilateral regular hyperbolic polygons $P_3$, $P_5$, $P_7$ with h-area $c=3$ and the circle $|z|=1/M_1$ where $M_1$ is given by~\eqref{eq:M1}. On the right, the equilateral regular hyperbolic polygons $P_3$, $P_5$, $P_7$ with h-perim $c=20$ and the circle $|z|=1/M_2$ with $M_2$ given by~\eqref{eq:M2}.}
\label{fig:Tm-c}
\end{figure}

Let $M_1$ be chosen such that 
\[
\mbox{h-area}(P_m)=c=\mbox{h-area}(B^2(0,1/M_1)).
\]
Then, by \eqref{hypDat0}, $B^2(0,1/M_1) = B_{\rho}(0,\hat M_1)$ where 
\[
\hat M_1=2\,\arth(1/M_1)\,. 
\]
Hence 
\[
c=\mbox{h-area}(B^2(0,1/M_1))=\mbox{h-area}(B_{\rho}(0,\hat M_1))=4\pi\,\sh^2(\hat M_1/2),
\]
which implies that the constant $\hat M_1$ is given by
\[
\hat M_1=2\,\arsh \sqrt{c/(4\pi)}\,.
\]
Thus, 
\begin{equation}\label{eq:M1}
M_1 = \frac{1}{\th\frac{\hat M_1}{2}}
= \frac{1}{\th(\arsh\sqrt{c/(4\pi)})}
= \sqrt{1+4\pi/c}.
\end{equation}

Note that $M_1>1$ and, by~\eqref{anncap}, 
\[
\capa(\D,B^2(0,1/M_1))=\frac{2\pi}{\log M_1} =  \frac{4\pi}{\log(1+4\pi/c)}. 
\]
We compute numerically the values of $\capa(\D, P_m)$ for several values of $c$ and $m$ using the MATLAB function \verb|annq| with  $n=m\times10^{10}$. The obtained numerical results are presented in Figure~\ref{fig:Tm-LB}. These numerical results lead to the following conjecture.\\

\noindent

\begin{nonsec}{\bf Conjecture.}
Let $0<c<\pi$ and let $P_m$ be a sequence of equilateral regular hyperbolic polygons such that $\mbox{h-area}(P_m)=c$ for all $m=3,4,\ldots\,.$ Then, the sequence $\capa(\D,P_m)$ is decreasing and bounded from below with
\begin{equation}\label{eq:conj-P-area}
\capa(\D,P_m)\ge \frac{2\pi}{\log M_1},
\end{equation}
where $M_1$ is given by~\eqref{eq:M1}.
\end{nonsec}

\begin{figure}[hbt] %
\centerline{
\scalebox{0.45}{\includegraphics[trim=0 0 0 0,clip]{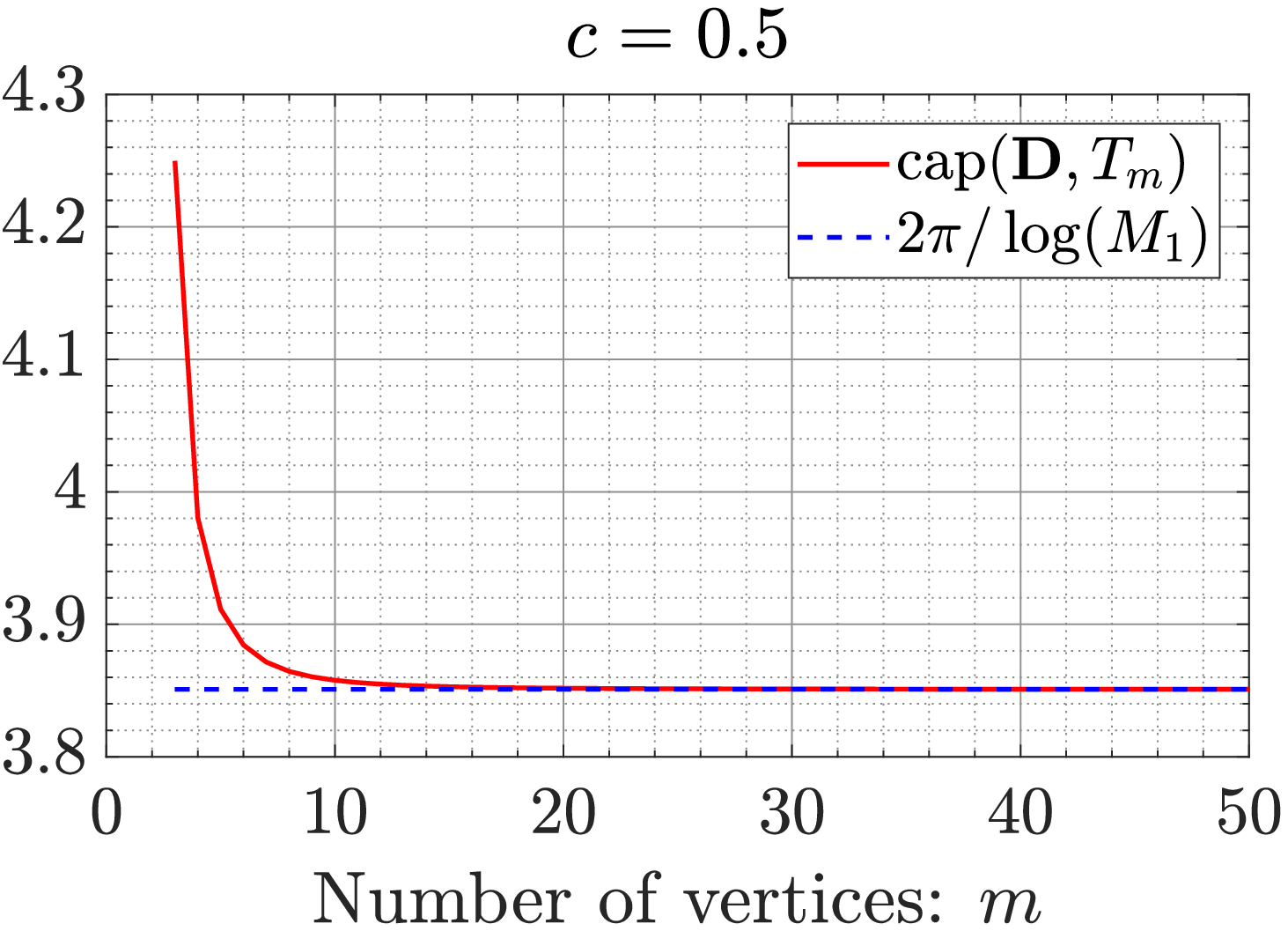}}
\hfill
\scalebox{0.45}{\includegraphics[trim=0 0 0 0,clip]{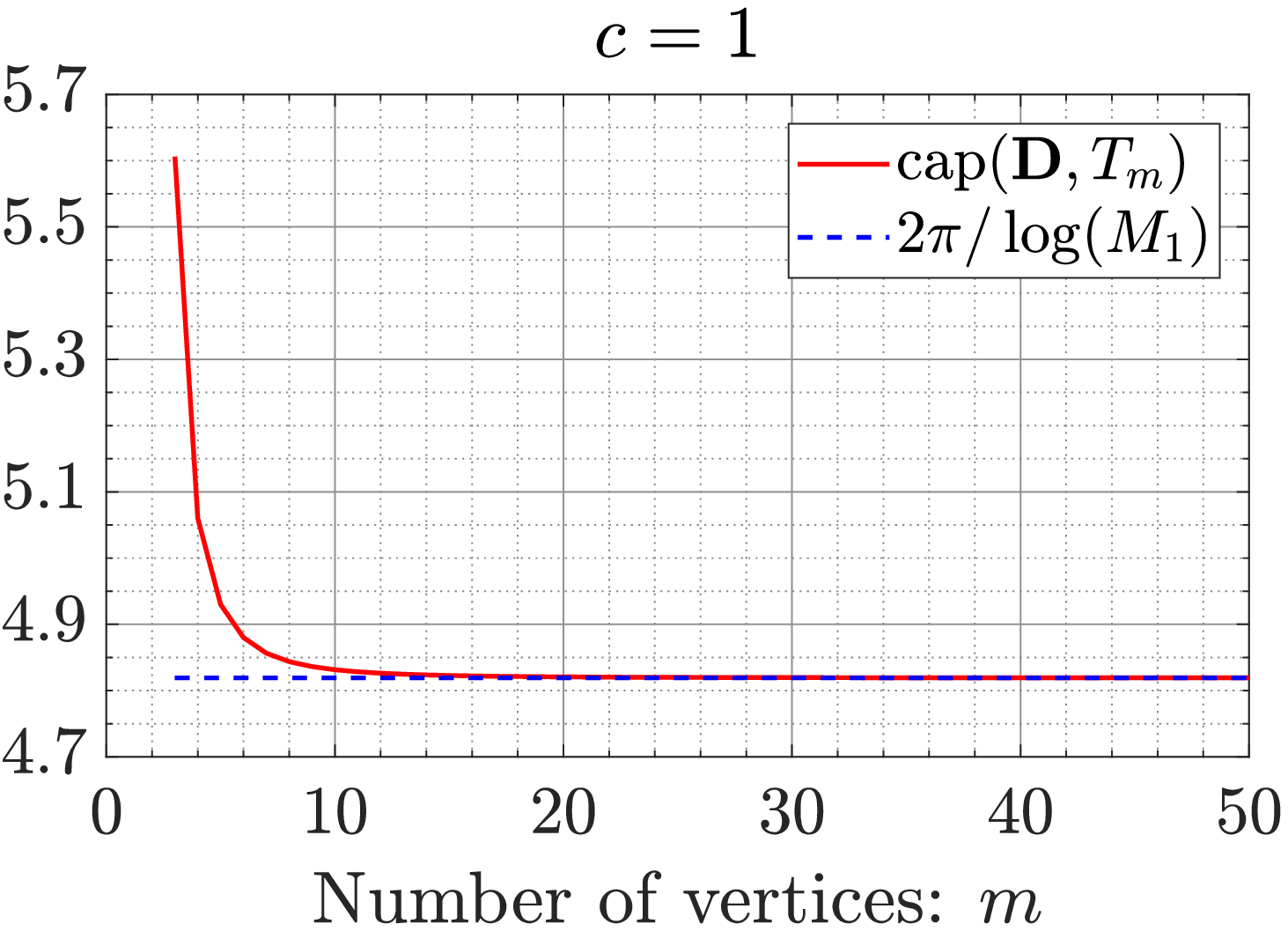}}
}
\vspace{0.3cm}
\centerline{
\scalebox{0.45}{\includegraphics[trim=0 0 0 0,clip]{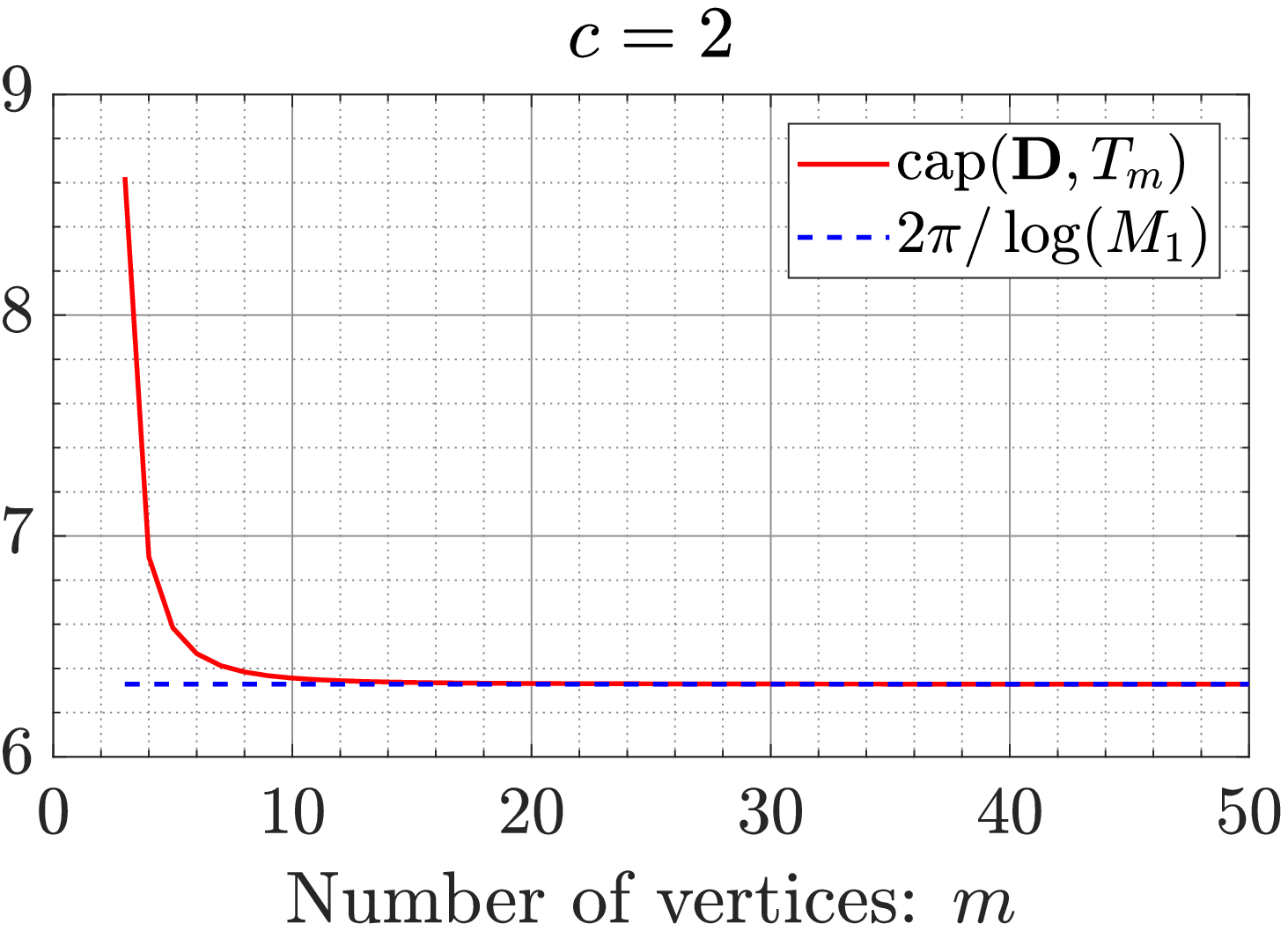}}
\hfill
\scalebox{0.45}{\includegraphics[trim=0 0 0 0,clip]{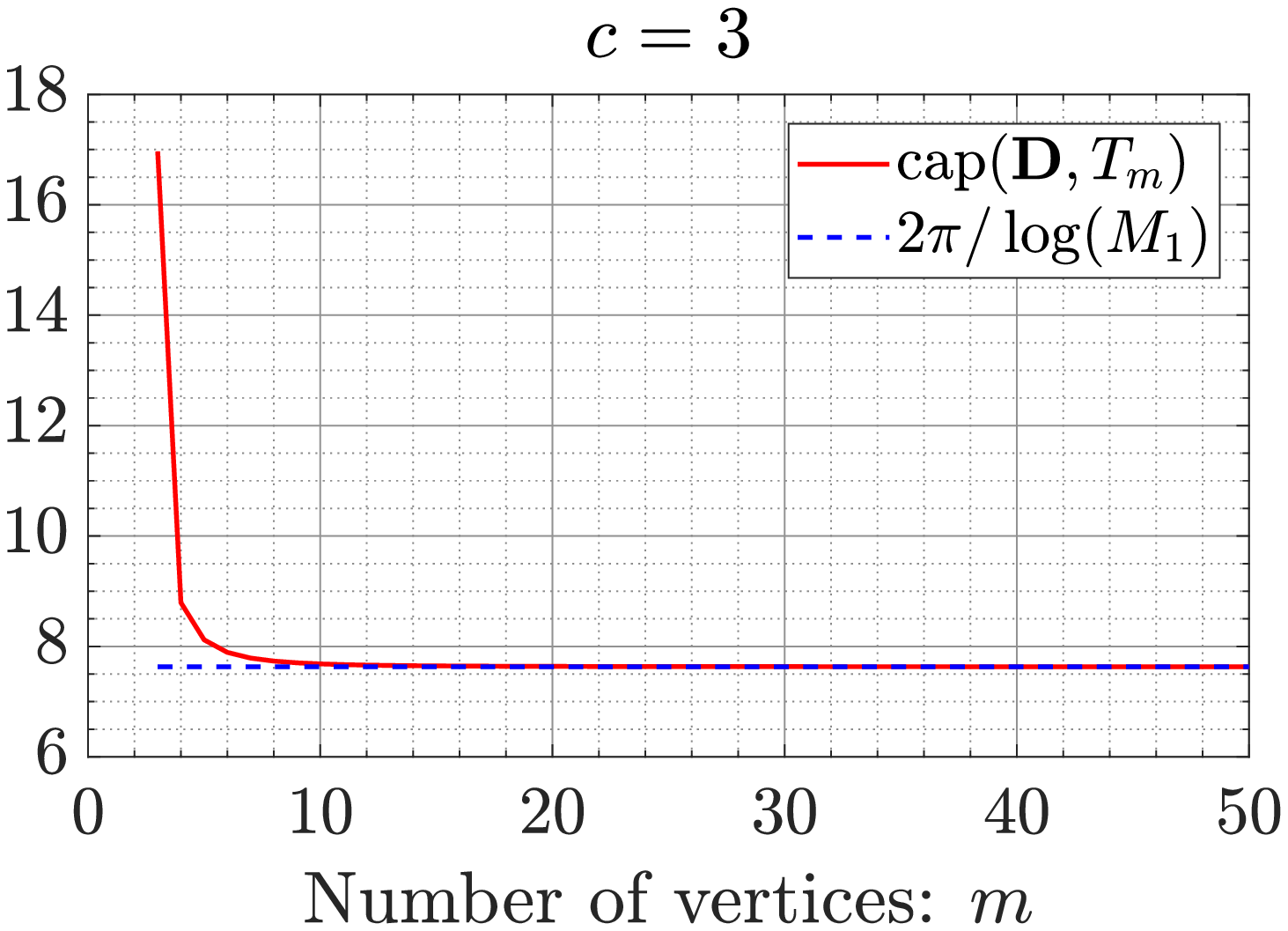}}
}
\caption{The values of $\capa(\D, P_m)$ for several values of $c$ and $m$ where $\mbox{h-area}(P_m)=c$.}
\label{fig:Tm-LB}
\end{figure}

\begin{nonsec}{\bf Hyperbolic perimeter.} 
Assume $c>0$ and consider the equilateral regular hyperbolic polygon $P_m$ such that 
\[
\mbox{h-perim}(P_m)=c, \quad {\rm for \; all}\; m=3,4,\ldots\,,
\]
see Figure~\ref{fig:Tm-c} (right) for $c=20$ and $m=3,5,7$.

Let $M_2$ be chosen such that 
\[
\mbox{h-perim}(P_m)=c=\mbox{h-perim}(S^1(0,1/M_2)),
\]
where $S^1(0,1/M_2)=\{z:|z|=1/M_2\}$. Then, by \eqref{hypDat0}, $S^1(0,1/M_2)=\{z:\rho(0,z)=\hat M_2\}$ where
\[
\hat M_2=2\arth (1/M_2).
\]
Thus 
\[
c=\mbox{h-perim}(S^1(0,1/M_2))
=\mbox{h-perim}(\{z:\rho(0,z)=\hat M_2\})=2\pi\,\sh(\hat M_2),
\]
which implies that
\[
\hat M_2=\arsh\frac{c}{2\pi},
\]
and hence
\begin{equation}\label{eq:M2}
M_2 = \frac{1}{\th\frac{\hat M_2}{2}}
= \frac{1}{\th\left(\frac{1}{2} \arsh\frac{c}{2\pi}\right)}
= \sqrt{1+\frac{4\pi^2}{c^2}}+\frac{2\pi}{c}.
\end{equation}

The MATLAB function \verb|annq| with  $n=m\times10^{10}$ is used to compute numerically the capacities $\capa(\D, P_m)$ for several values of $c$ and $m$. The obtained results are presented in Figure~\ref{fig:Tm-UB} and suggest the following conjecture, where by~\eqref{anncap}, 
\[
\capa(\D,B^2(0,1/M_2))=\frac{2\pi}{\log M_2}. 
\]
\end{nonsec}

\noindent
\begin{nonsec}{\bf Conjecture.}
Let $c>0$ and let $P_m$ be a sequence of equilateral regular hyperbolic polygons such that $\mbox{h-perim}(P_m)=c$ for all $m=3,4,\ldots\,.$ Then, the sequence $\capa(\D,P_m)$ is increasing and bounded from above with
\begin{equation}\label{eq:conj-P-perim}
\capa(\D,P_m)\le \frac{2\pi}{\log M_2},
\end{equation}
where $M_2$ is given by~\eqref{eq:M2}.

\begin{rem}
At the stage when the manuscript was processed for publication, we have learned about F.W. Gehring's work~\cite[Corollary~6]{g} which implies that the upper bound
\eqref{eq:conj-P-perim} here holds for all sets $E \subset \D$ with the hyperbolic perimeter at most $c$. 
Note that in~\cite{g} the hyperbolic metric differs by factor $(1/2)$ from our hyperbolic metric and therefore~\cite[Corollary~6]{g} gives
the constant $\pi$ in the formula for $M_2$ whereas we have $2 \pi$.
\end{rem}

\end{nonsec}

\begin{figure}[hbt] %
\centerline{
\scalebox{0.45}{\includegraphics[trim=0 0 0 0,clip]{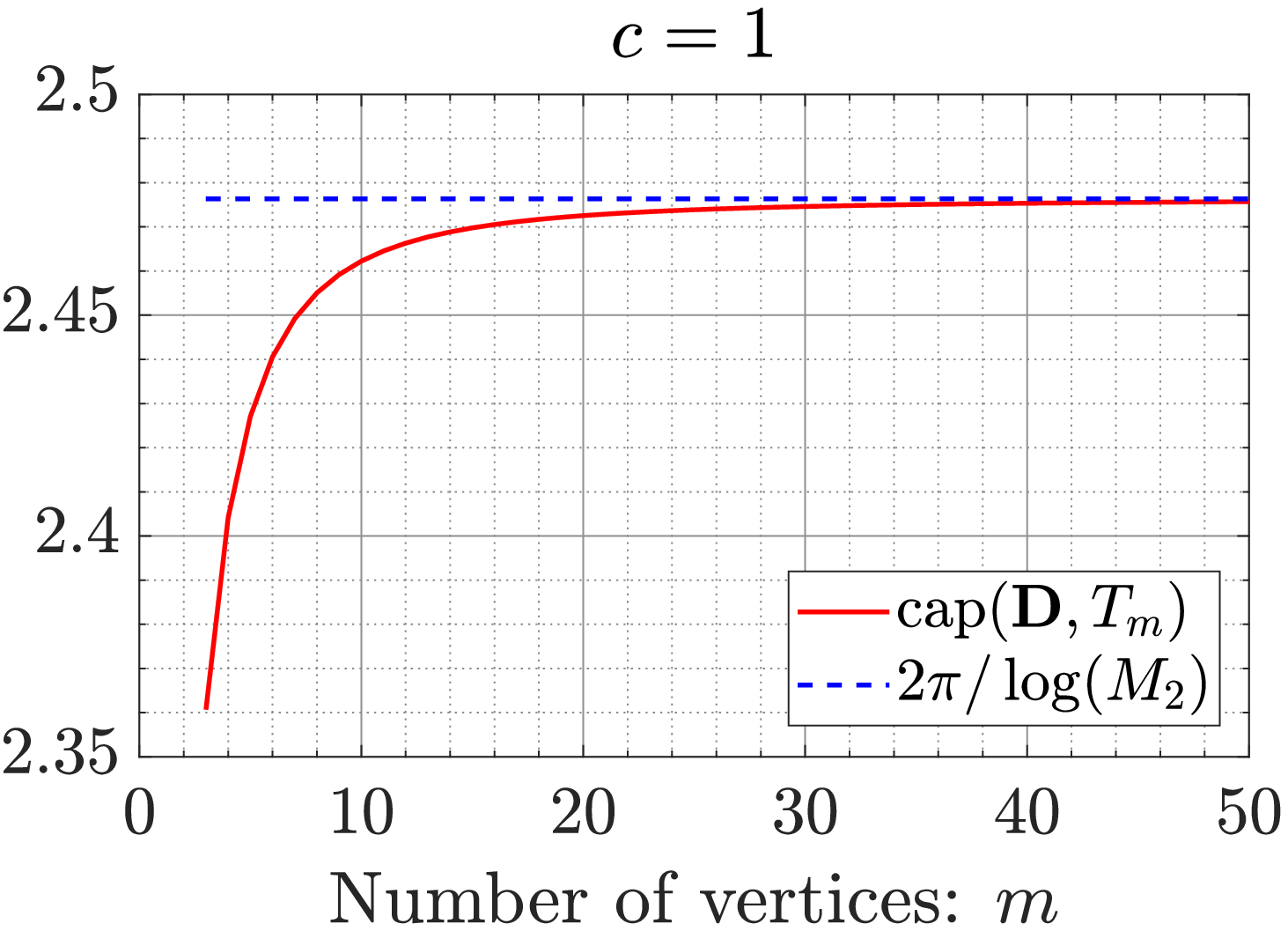}}
\hfill
\scalebox{0.45}{\includegraphics[trim=0 0 0 0,clip]{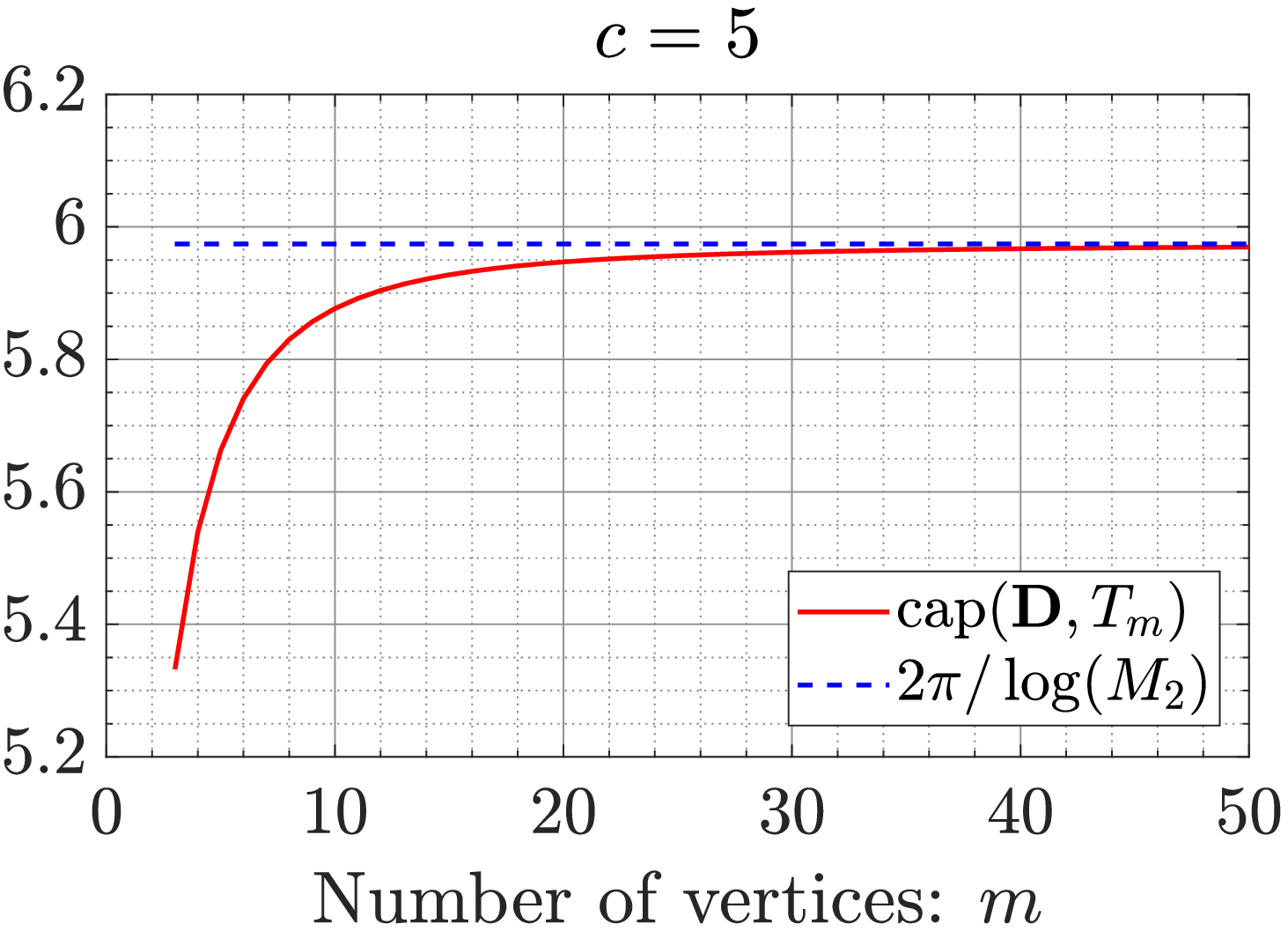}}
}
\vspace{0.3cm}
\centerline{
\scalebox{0.45}{\includegraphics[trim=0 0 0 0,clip]{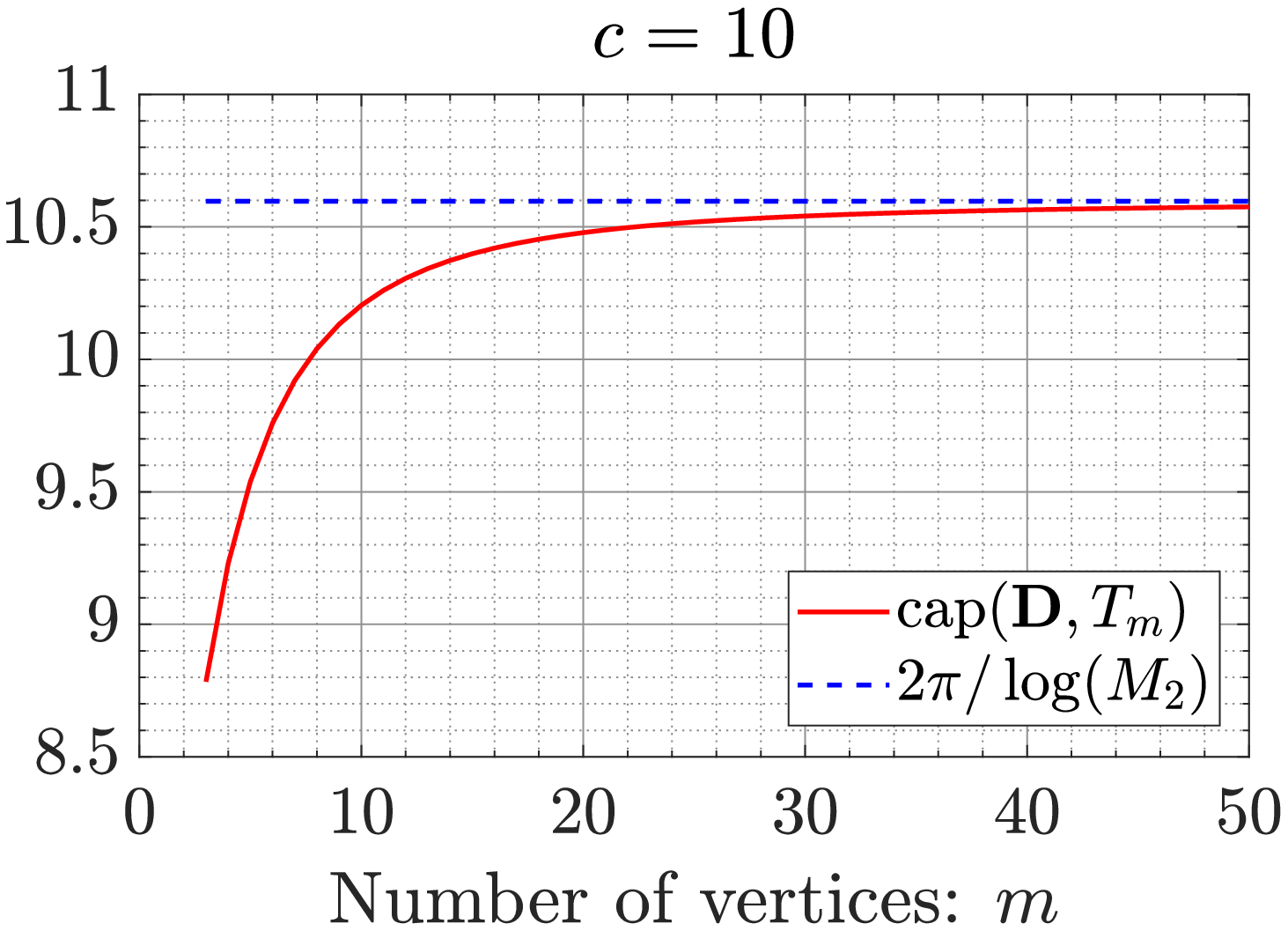}}
\hfill
\scalebox{0.45}{\includegraphics[trim=0 0 0 0,clip]{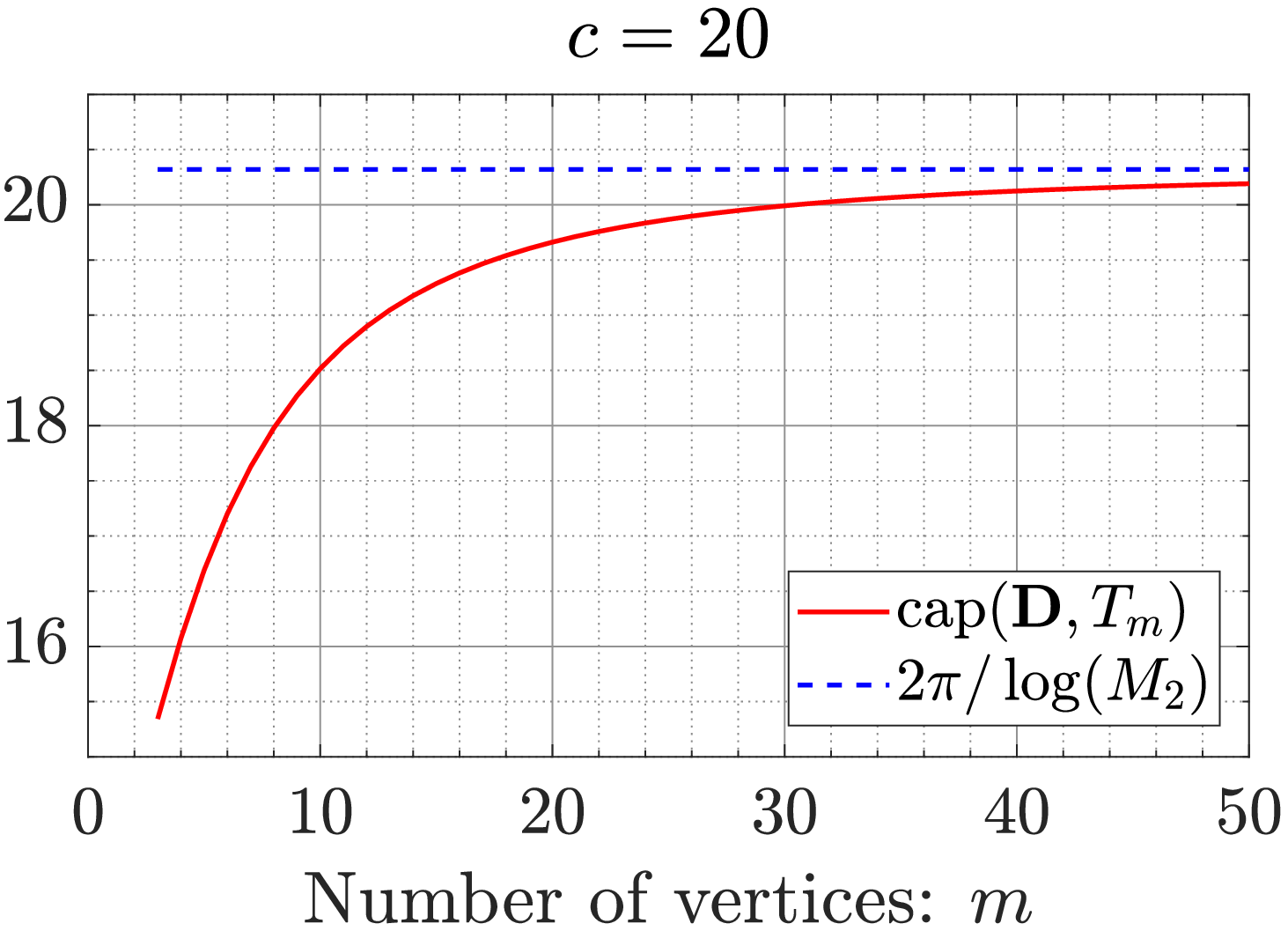}}
}
\caption{The values of $\capa(\D, P_m)$ for several values of $c$ and $m$ where $\mbox{h-perim}(P_m)=c$.}
\label{fig:Tm-UB}
\end{figure}

\section{Subadditivity and additivity of the modulus}

For curve families $\Gamma_j$, $j=1,2,\ldots$, we have the following 
subadditivity property by Lemma \ref{le71},
\cite[Lemma 7.1(3)]{hkv}
\begin{equation}\label{eq:sub-add}
\mM\left(\bigcup_{i=1}^{\infty}\Gamma_j\right) \le
\sum_{j=1}^\infty \mM\left(\Gamma_j\right),
\end{equation}
and if the families $\Gamma_j$ are separate we have the additivity  Lemma \ref{le73}, \cite[Lemma 7.3]{hkv}
\begin{equation}\label{eq:add}
\mM\left(\bigcup_{i=1}^{\infty}\Gamma_j\right) =
\sum_{j=1}^\infty \mM\left(\Gamma_j\right),
\end{equation}

 \nonsec{\rm {\bf Equilateral hyperbolic triangle.} 
In this section, for a given $s\in(0,1)$, we consider the equilateral hyperbolic triangle $T$ with vertices
\[
a_0=s, \quad a_1=s\,e^{\i\theta}, \quad a_2=s\,e^{2\i\theta}, \quad \theta=\frac{2\pi}{3}.
\]
Assume that the three angles of this triangle are equal to $\beta$ and the length of all three sides of the triangle is $b$.
Let $D$ be the doubly connected domain between the unit circle and the triangle $T$. Then, there exists a unique real constant $q\in(0,1)$ and a unique conformal mapping $f:D\to \Omega=B^2(0,1) \setminus \overline{B^2(0,q)}$ normalized by $f(\alpha)>0$ where $\alpha\in(s,1)$ (see~Figure~\ref{fig:tri-equ}).
Let $D_j$ be the sub-domain of $D$ defined by
\[
D_j=\left\{z\,:\, z\in G, \quad \theta_{j-1}<\arg z<\theta_j\right\}, \quad j=1,2,3,
\]
where $\theta_j=j\theta$, for $j=0,1,2,3$.
By symmetry, the conformal mapping $w=f(z)$ will map each sub-domain $D_j$ onto a sub-domain $\Omega_j$ of $\Omega$ where
\[
\Omega_j=\left\{w\,:\, q<|w|<1, \quad \theta_{j-1}<\arg w<\theta_{j}\right\}, \quad j=1,2,3.
\]
The sub-domains $D_1$ and $\Omega_1$ are the shaded domains in Figure~\ref{fig:tri-equ}. 

Let
\begin{equation}\label{eq:Gam-1}
\Gamma=\Delta(S^1(q) , S^1 ; B^2(0,1) \setminus  B^2(0,q))
\end{equation}
be the family of all curves joining $S^1(q)$ and $S^1$ in $B^2(0,1) \setminus  B^2(0,q)$. Let also 
\[
\Delta=\{u\,s\;:\:u\in S^1, \;\;q\le s\le 1\}\subset\Gamma.
\]
In this case $\Delta$ consists of radial segments joining the boundary components of the ring $B^2(0,1) \setminus  B^2(0,q)$. Moreover, choose the separate subfamilies 
\[
\Delta_j=\{u\,s\;:\:u\in S^1, \;\; \theta_{j-1}\le\arg(u)\le\theta_j, \;\;q\le s\le 1\}, \quad j=1,2,3.
\]
Then by \eqref{eq:add} 
\[
\mM(\Gamma)=\mM(\Delta)=\sum_{j=1}^{3}\mM(\Delta_j).
\]
For the above conformal mapping $f$, we see that $f^{-1}(S^1(q))=T$. Hence, by~\cite[Lemma 7.1(3)]{hkv} and~\cite[Theorem 2. 7]{jen},
\begin{equation}\label{eq:M-finv-1}
\mM(\Delta(T , S^1 ; \D)) = \sum_{j=1}^{m}\mM(f^{-1}\Delta_j)
\end{equation}
where $f^{-1}\Delta_j=\{f^{-1}\circ\gamma\;:\;\gamma\in\Delta_j\}$. By symmetry and by~\cite[5.17]{vu88}, we have
\[
\mM(f^{-1}\Delta_3)=\mM(f^{-1}\Delta_2)=\mM(f^{-1}\Delta_1)=\mM(\Delta_1).
\]
Consequently, 
\begin{equation}\label{eq:cap-B2T}
\capa(\D,T) = \mM(\Delta(T,S^1;\D))=\sum_{j=1}^3\mM(f^{-1}\Delta_j)=3\mM(f^{-1}\Delta_1)
=3\mM(\Delta_1).
\end{equation}

\begin{figure}[hbt] %
\centerline{
\hfill
\scalebox{0.45}{\includegraphics[trim=0 0 0 0,clip]{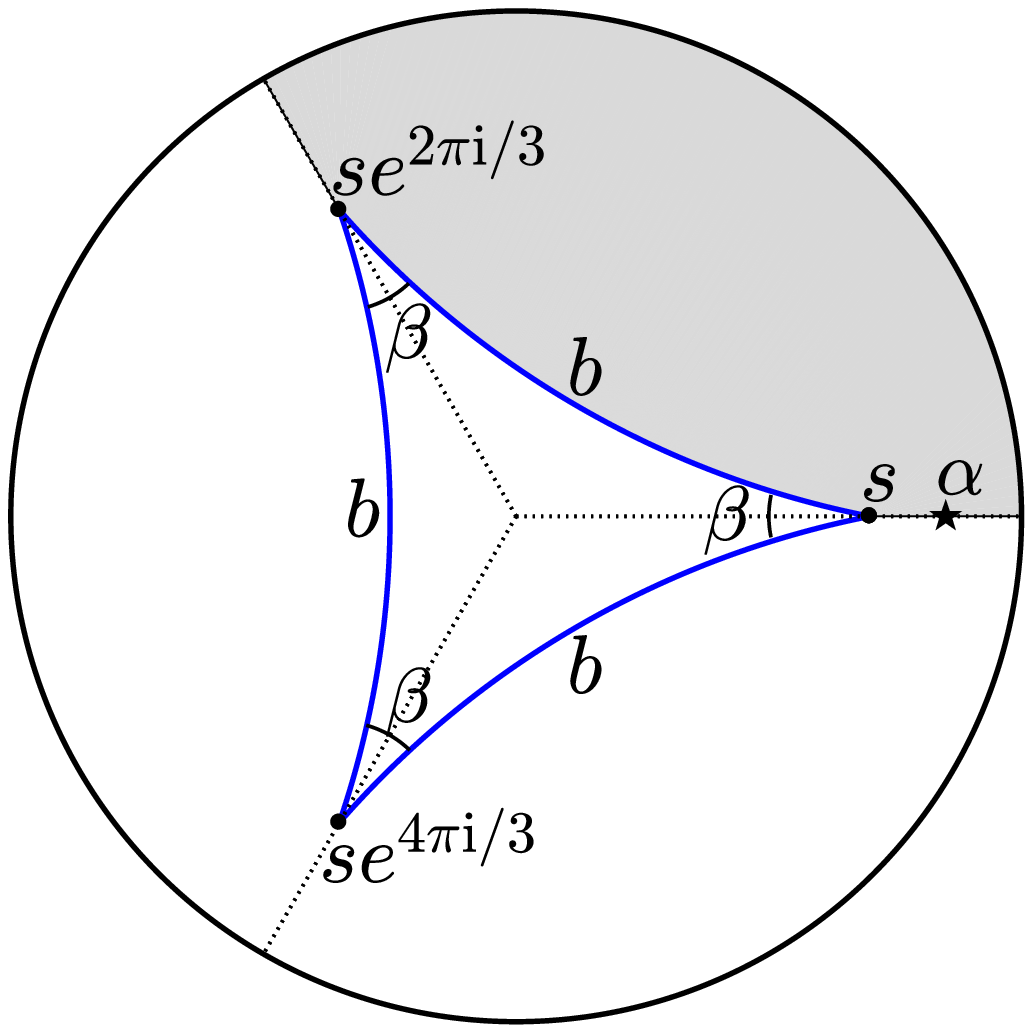}}
\hfill
\scalebox{0.45}{\includegraphics[trim=0 0 0 0,clip]{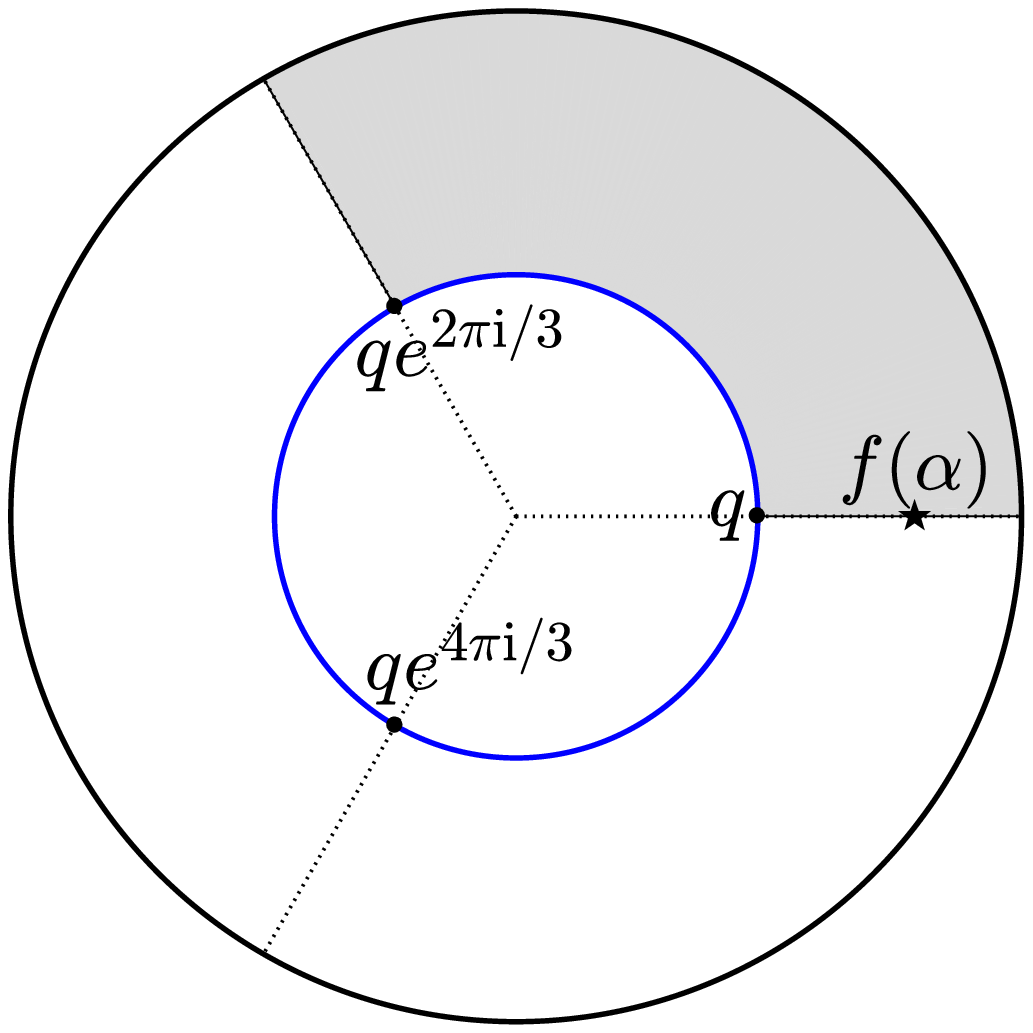}}
\hfill
}
\caption{The doubly connected domain $D$ and its image $\Omega$.}
\label{fig:tri-equ}
\end{figure}

\nonsec{\rm {\bf Lower bound for the capacity of equilateral hyperbolic triangle.}}
Let $\hat{T}$ be the connected set consisting of the three segments $re^{\i k\alpha}$ for $\alpha=2\pi/3$, $0\le r\le s$ and $k=0,1,2$. Let also $\hat{D}$ be the domain obtained by removing $\hat{T}$ from the unit disk $\D$ (see Figure~\ref{fig:tri-equ-D}).

\begin{figure}[hbt] %
\centerline{
\hfill
\scalebox{0.45}{\includegraphics[trim=0 0 0 0,clip]{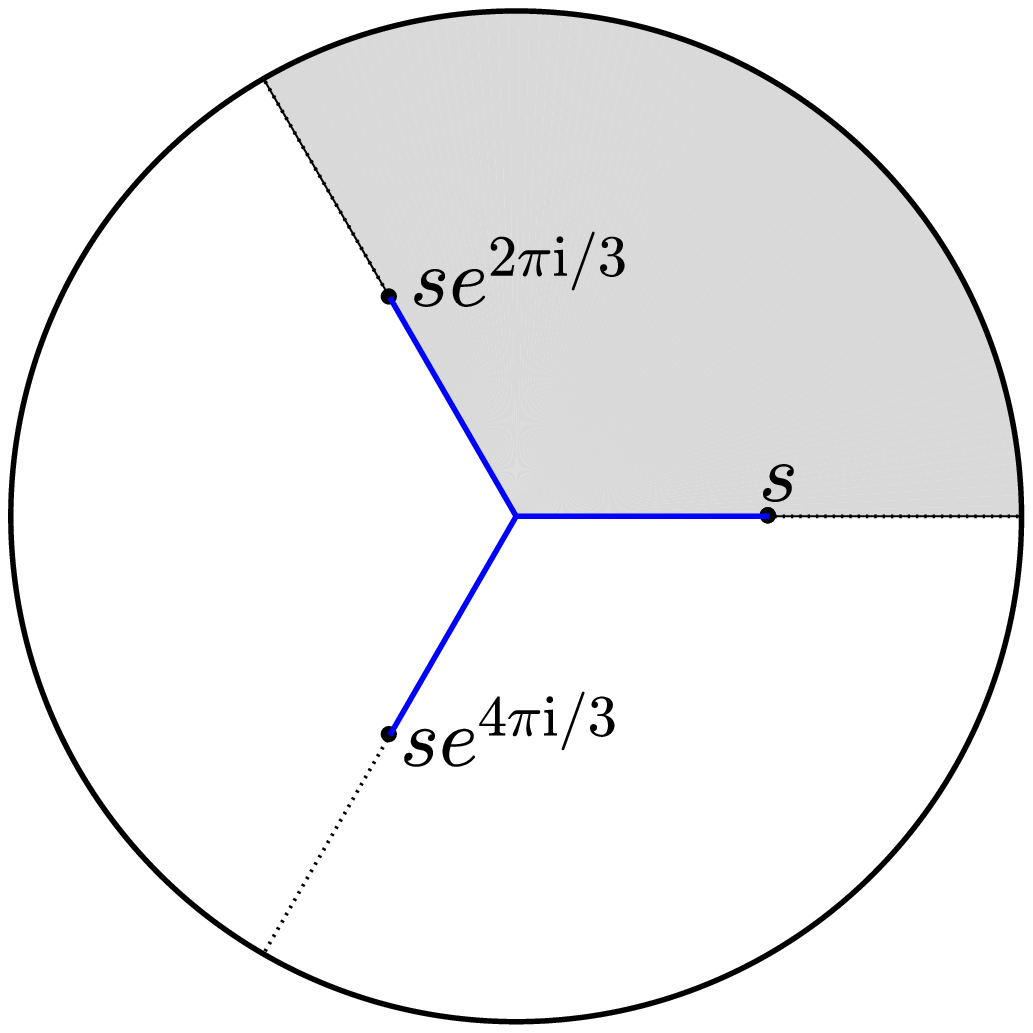}}
\hfill
\scalebox{0.45}{\includegraphics[trim=0 0 0 0,clip]{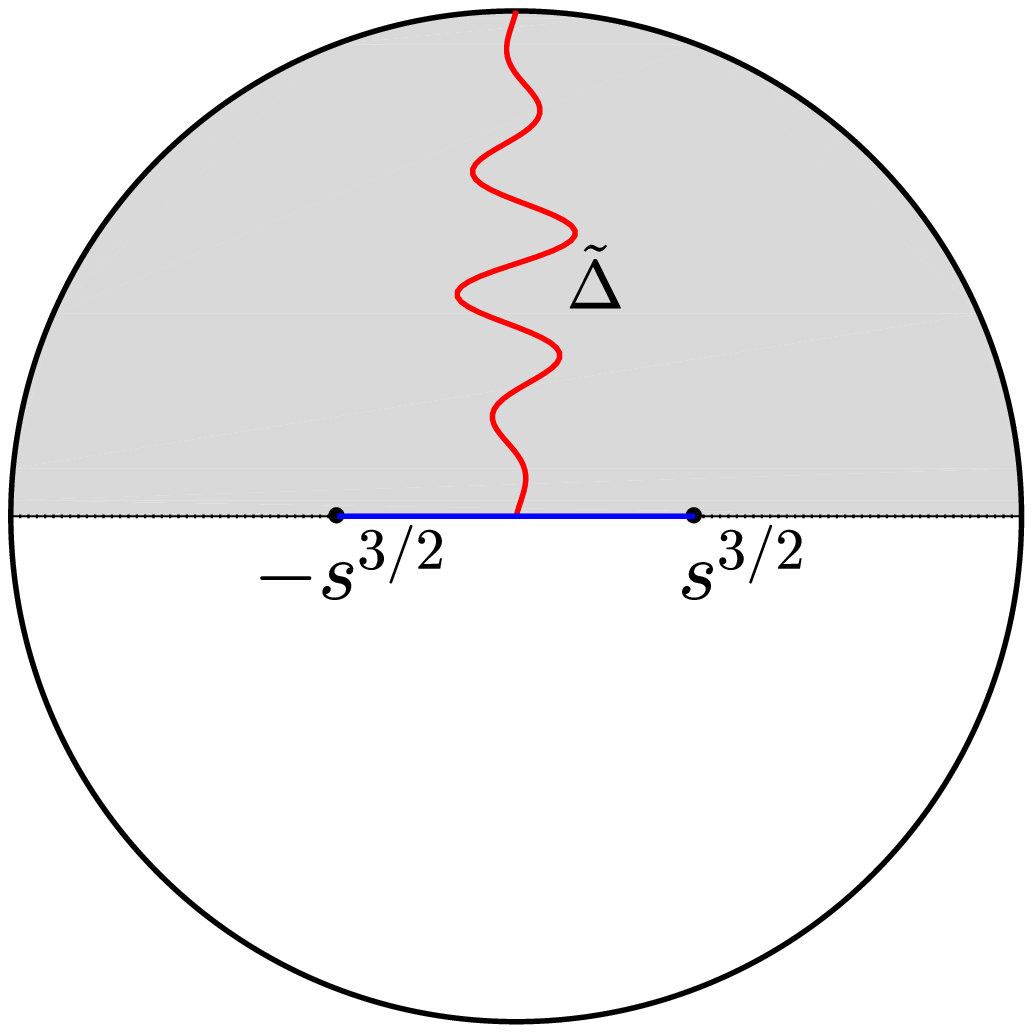}}
\hfill
}
\caption{The doubly connected domain $\hat D$ (left) and the family of curves $\tilde\Delta$.}
\label{fig:tri-equ-D}
\end{figure}

\begin{lem}\label{Lem:LB-Tri-h} 
The capacity $\capa(\D,\hat{T})$ is given by
\[
\capa(\D,\hat{T}) = \frac{6\pi}{\mu\left(s^3\right)}.
\]
\end{lem}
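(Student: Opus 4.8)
The plan is to exploit the threefold symmetry of $\hat{T}$ together with the conformal map to the Grötzsch ring and the known formula \eqref{GrCap} for the Grötzsch capacity. First I would observe that $\hat{T}$ is invariant under rotation by $\alpha = 2\pi/3$, so by the same symmetry/decomposition argument already used in \eqref{eq:cap-B2T} we have $\capa(\D,\hat{T}) = 3\,\mM(f^{-1}\hat\Delta_1)$, where $f$ is a conformal map of $\hat{D} = \D \setminus \hat{T}$ onto a circular annulus and $\hat\Delta_1$ is the restriction of the radial curve family to one of the three congruent sectors. Equivalently, it suffices to compute the modulus of the curve family joining $\hat{T}$ to $S^1$ inside one sector $\{z : 0 < \arg z < \alpha\}$, i.e.\ inside a $120^\circ$ wedge with the slit $[0,s]$ removed from its bottom edge and the slit $se^{\i\alpha}\cdot[0,1]$ (rescaled) on its top edge.

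The key step is to straighten this sector. I would apply the power map $z \mapsto z^{3/2}$ (or work on the doubled configuration and use $z \mapsto z^3$), which maps the $120^\circ$ sector onto a half-plane and sends the two radial slits forming the boundary of $\hat{T}$ within that sector onto a single segment $[0,s^{3/2}]$ on the real axis, with the unit circle going to the unit circle. More cleanly: reflecting $\hat D$ across the three slits and using that $\hat T$ consists of three spokes meeting at $0$ with equal angles, the map $w = z^3$ is a proper degree-$3$ branched cover that carries $\D \setminus \hat T$ onto the Grötzsch ring $\D \setminus [0,s^3]$. Since a degree-$3$ covering multiplies the modulus of a lifted curve family by $3$ (each curve downstairs has exactly three lifts, and the families are separate), one gets
\[
\capa(\D,\hat{T}) = \mM\bigl(\Delta(\hat{T}, S^1; \D)\bigr) = 3\,\mM\bigl(\Delta([0,s^3], S^1; \D)\bigr) = 3\,\capa(\D,[0,s^3]).
\]
Then invoking the Grötzsch capacity formula \eqref{GrCap}, namely $\capa(\D,[0,r]) = 2\pi/\mu(r)$, with $r = s^3$, yields $\capa(\D,\hat{T}) = 6\pi/\mu(s^3)$, as claimed.

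The main obstacle is making the covering-space/modulus bookkeeping rigorous: one must check that $z \mapsto z^3$ induces the correct relationship between the modulus of $\Delta([0,s^3],S^1;\D)$ and its full preimage under $z\mapsto z^3$, and that this full preimage is exactly $\Delta(\hat T, S^1; \D)$ (up to curves of zero modulus, e.g.\ those passing through the branch point $0$). This follows from the standard behavior of modulus under branched coverings --- each admissible metric $\rho$ downstairs pulls back to an admissible metric $\rho \circ (z^3)\,|3z^2|$ upstairs, and conversely, with the area integral scaling by the degree $3$ because the covering is $3$-to-$1$ a.e.; the set of curves through $0$ has zero modulus by Lemma~\ref{le71}(1) applied after a routine estimate. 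Alternatively one can avoid coverings entirely by the symmetric decomposition into three sectors as in \eqref{eq:cap-B2T} and then conformally mapping a single sector (with its boundary slit) onto a standard Grötzsch-type configuration via $z \mapsto z^{3/2}$ followed by a Möbius map; either route reduces cleanly to \eqref{GrCap}.
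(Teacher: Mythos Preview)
Your argument is correct, and your main route via the cube map $w=z^{3}$ is genuinely different from the paper's. The paper also begins by isolating one $120^{\circ}$ sector and applying $z\mapsto z^{3/2}$, but it lands on the upper half of $\D$ with the slit $[-s^{3/2},s^{3/2}]$ (both boundary rays of the sector carry a spoke, so the image is the symmetric segment, not $[0,s^{3/2}]$ as you wrote in passing). From there the paper invokes the half-Gr\"otzsch formula $\mM(\tilde\Delta)=\pi/\mu\bigl(\th\tfrac12\rho_{\D}(-s^{3/2},s^{3/2})\bigr)$, computes that hyperbolic tangent as $2s^{3/2}/(s^{3}+1)$, and then needs the Landen-type identity $\mu(r)=\tfrac12\mu\bigl((r/(1+\sqrt{1-r^{2}}))^{2}\bigr)$ to reach $\mu(s^{3})$. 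Your covering argument sidesteps all three of these steps: because $z\mapsto z^{3}$ restricts to an unbranched $3$-sheeted conformal covering of the ring domain $\D\setminus\hat T$ onto the Gr\"otzsch ring $\D\setminus[0,s^{3}]$, the capacity triples, and \eqref{GrCap} gives $3\cdot 2\pi/\mu(s^{3})$ immediately. The one point to tighten is the one you already flagged: the parenthetical ``the families are separate'' is not literally correct, since curves in $\D\setminus\hat T$ are not confined to sectors. The factor $3$ is cleanly justified either by the standard behaviour of the conformal modulus of a ring domain under finite covers, or by symmetrizing admissible densities over the deck group $\mathbb{Z}/3\mathbb{Z}$ and pushing down, exactly as your final paragraph outlines.
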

\begin{proof}
Let $\hat{D}_1$ be the sub-domain of $\hat{D}$,
\[
\hat{D}_1=\left\{z\,:\, z\in \hat{G}, \quad \theta_0<\arg z<\theta_1\right\},
\]
(see the shaded domain in Figure~\ref{fig:tri-equ-D} (left)). 
The domain $\hat{D}_1$ can be mapped by the conformal mapping 
\[
g(z)= z^{3/2}
\]
which  onto the upper half of the unit disk and the two segments from $s$ to $0$ and from $0$ to $s\,e^{\theta_1\i}$ are mapped onto the segment $[-s^{3/2},s^{3/2}]$. 
Let $\tilde\Delta$ be the family of curves in the upper half of the unit disk connecting the segment $[-s^{3/2},s^{3/2}]$ to the upper half of the unit circle (see Figure~\ref{fig:tri-equ-D} (right)). 
Then using the same argument as above,
\begin{equation}\label{eq:hT-1}
\capa(\D,\hat{T})=\mM(\Delta(\hat{T},S^1;\D))=3\mM(\tilde{\Delta}).
\end{equation}

By symmetry, it follows from~\cite[5.20,~7.32]{vu88},
\begin{equation}\label{eq:hT-2}
\mM(\tilde\Delta)=\frac{1}{2}\gamma_2\left(\frac{1}{\th\frac{1}{2}\rho_{\D}(-s^{3/2},s^{3/2})}\right)= \frac{\pi}{\mu\left(\th\frac{1}{2}\rho_{\D}(-s^{3/2},s^{3/2})\right)}.
\end{equation}
Using~\eqref{myrho}, we have
\[
\th\frac{1}{2}\rho_{\D}(-s^{3/2},s^{3/2})
=\frac{2s^{3/2}}{s^3+1}.
\]
and hence
\begin{equation}\label{eq:hT-3}
\mM(\tilde\Delta) = \frac{\pi}{\mu\left(2s^{3/2}/(s^3+1)\right)}.
\end{equation}
Using the formula (\cite[(5.4)]{avv})
\[
\mu(r)=\frac{1}{2}\mu\left(\left(\frac{r}{1+\sqrt{1-r^2}}\right)^2\right),
\]
with $r=2s^{3/2}/(s^3+1)$ and hence $\left(\frac{r}{1+\sqrt{1-r^2}}\right)^2=s^3$, we obtain
\begin{equation}\label{eq:hT-4}
\mM(\tilde\Delta) = \frac{2\pi}{\mu\left(s^3\right)}.
\end{equation}
The proof then follows from~\eqref{eq:hT-1} and~\eqref{eq:hT-4}.
\end{proof}

\begin{lem}\label{Lem:LB-Tri} 
The capacity $\capa(\D,T)$ can be estimated by
\begin{equation}\label{eq:ineq-L}
\capa(\D,T)\ge \frac{6\pi}{\mu\left(s^3\right)}.
\end{equation}
\end{lem}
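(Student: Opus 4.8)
The plan is to reduce the statement to the exact evaluation in Lemma~\ref{Lem:LB-Tri-h} by using the monotonicity of the condenser capacity with respect to set inclusion. Concretely, I would first show that the ``tripod'' $\hat T=\bigcup_{k=0}^{2}\{r\,e^{\i k\theta}\col 0\le r\le s\}$ (with $\theta=2\pi/3$) is contained in the closed hyperbolic triangle $T$ with vertices $a_0=s$, $a_1=se^{\i\theta}$, $a_2=se^{2\i\theta}$, and then invoke Theorem~\ref{thm96} together with Lemma~\ref{le71}(2) to compare the two capacities.

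For the inclusion $\hat T\subseteq T$ I would argue as follows. The closed hyperbolic triangle $T$ is the intersection of the three closed hyperbolic half-planes determined by the geodesics carrying its sides, hence $T$ is hyperbolically convex. Since $T$ is invariant under the rotation $z\mapsto e^{\i\theta}z$, whose only fixed point in $\D$ is the origin, a short symmetry argument (for any $x\in T$ the hyperbolic centroid of $\{x,e^{\i\theta}x,e^{2\i\theta}x\}$ is $0$ and lies in $T$) shows $0\in T$. Now, for each $k$ the points $0$ and $a_k=se^{\i k\theta}$ both lie in $T$, and the Euclidean segment $[0,a_k]$ lies on a diameter of $\D$, so it coincides with the hyperbolic geodesic segment joining $0$ to $a_k$; by hyperbolic convexity of $T$ this segment lies in $T$. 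Taking the union over $k=0,1,2$ gives $\hat T\subseteq T$.

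Once this is established the rest is immediate: every curve in $\D$ joining $\hat T$ to $S^1$ has one endpoint in $\hat T\subseteq T$, hence also joins $T$ to $S^1$, so $\Delta(\hat T,S^1;\D)\subseteq\Delta(T,S^1;\D)$, and Theorem~\ref{thm96} together with Lemma~\ref{le71}(2) yields
\[
\capa(\D,\hat T)=\mM\bigl(\Delta(\hat T,S^1;\D)\bigr)\le\mM\bigl(\Delta(T,S^1;\D)\bigr)=\capa(\D,T).
\]
Combining this with Lemma~\ref{Lem:LB-Tri-h}, which evaluates $\capa(\D,\hat T)=6\pi/\mu(s^3)$, gives~\eqref{eq:ineq-L}. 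The only step that is not entirely formal is the inclusion $\hat T\subseteq T$: it rests on the hyperbolic convexity of $T$ and on the fact that diameters of $\D$ are hyperbolic geodesics, together with the (essentially automatic, by the three-fold rotational symmetry) observation that $0\in T$. Everything else is the standard monotonicity of condenser capacity under set inclusion, a counterpart of the subadditivity already used in~\eqref{eq:hyb-cap1}.
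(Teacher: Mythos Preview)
Your proposal is correct and follows essentially the same route as the paper: both compare $\capa(\D,T)$ to $\capa(\D,\hat T)$ via the inclusion $\hat T\subseteq T$ and then invoke Lemma~\ref{Lem:LB-Tri-h}. The only cosmetic difference is that the paper phrases the comparison through the minorization relation and Lemma~\ref{le72} rather than curve-family inclusion and Lemma~\ref{le71}(2), and it does not spell out the geometric justification of $\hat T\subseteq T$ that you provide.
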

\begin{proof}
Let $\overline{\Delta}=\Delta(T , S^1 ; G)$ and $\hat{\Delta}=\Delta(\hat{T} , S^1 ; \hat{G})$, then $\overline{\Delta}<\hat{\Delta}$. Hence, by Lemma~\ref{le72},
\[
\capa(\D,T)=\mM(\overline{\Delta})\ge \mM(\hat{\Delta})=\capa(\D,\hat{T}),
\]
and the proof follows from Lemma~\ref{Lem:LB-Tri-h}.
\end{proof}

\nonsec{\rm {\bf An upper bound for the capacity of equilateral hyperbolic triangle.}}

\begin{lem}\label{Lem:UB-Tri} 
The capacity $\capa(\D,T)$ can be estimated by
\begin{equation}\label{eq:ineq-U}
\capa(\D,T)\le \frac{3\pi}{\mu\left(\sqrt{3}s/\sqrt{s^4+s^2+1}\right)}.
\end{equation}
\end{lem}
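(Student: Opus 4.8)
The plan is to mimic the reduction used for the lower bound, collapsing $\capa(\D,T)$ to a single quadrilateral modulus, and then to dominate that modulus by the modulus of a standard half-disk quadrilateral whose ``inner side'' is a geodesic segment of the same hyperbolic length as a side of $T$. Write $b=\rho_{\D}(a_0,a_1)$ for the common hyperbolic side length of $T$, let $S_1$ be the side of $T$ joining $a_0$ to $a_1$, put $A_1=\{e^{i\phi}:0<\phi<2\pi/3\}$, and let $D_1$ be the subdomain of $D$ lying in the sector $0<\arg z<2\pi/3$. Then, exactly as in the derivation of~\eqref{eq:cap-B2T} (the $f^{-1}$-images of the radial family $\Delta_1$ form an extremal subfamily, so conformal invariance identifies its modulus with $\mM(\Delta(S_1,A_1;D_1))$),
\[
\capa(\D,T)=3\,\mM\bigl(\Delta(S_1,A_1;D_1)\bigr).
\]
Hence it suffices to show $\mM(\Delta(S_1,A_1;D_1))\le\pi/\mu\bigl(\th(b/2)\bigr)$, after which~\eqref{myrho} finishes the computation: $\sh(b/2)=\sqrt3\,s/(1-s^2)$, so $\th(b/2)=\sqrt3\,s/\sqrt{s^4+s^2+1}$.

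Next I would normalize the side $S_1$. Let $\tilde S_1$ be the complete geodesic of $\D$ through $a_0,a_1$. Since the configuration is symmetric in the rays $\arg z=0$ and $\arg z=2\pi/3$, and $\tilde S_1$ meets each of those two diameters only once (at $a_0$, resp.\ $a_1$), the two arcs of $\tilde S_1$ running from $a_0$ and from $a_1$ out to $\partial\D$ leave the sector $0<\arg z<2\pi/3$; thus $\tilde S_1\cap\{0<\arg z<2\pi/3\}=S_1$, and $D_1$—being contained in that sector and disjoint from $S_1$—lies in a single component of $\D\setminus\tilde S_1$. Choose a M\"obius automorphism $\psi$ of $\D$ taking $\tilde S_1$ onto the diameter $(-1,1)$ and the hyperbolic midpoint of $S_1$ onto $0$; then $\psi(a_0)=-t$, $\psi(a_1)=t$ with $\rho_{\D}(-t,t)=b$, $\psi(S_1)=[-t,t]$, and (composing with conjugation if needed) $\psi(D_1)\subseteq\D^{+}$, the upper half of $\D$, while $\psi(A_1)$ lies on the upper half $\{e^{i\phi}:0<\phi<\pi\}$ of the unit circle. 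By conformal invariance of the modulus,
\[
\mM\bigl(\Delta(S_1,A_1;D_1)\bigr)=\mM\bigl(\Delta([-t,t],\psi(A_1);\psi(D_1))\bigr).
\]

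Now the inclusions $\psi(D_1)\subseteq\D^{+}$, $[-t,t]\subseteq[-t,t]$, $\psi(A_1)\subseteq\{e^{i\phi}:0<\phi<\pi\}$ show every curve of $\Delta([-t,t],\psi(A_1);\psi(D_1))$ already lies in $\Delta([-t,t],\{e^{i\phi}:0<\phi<\pi\};\D^{+})$, so monotonicity of the modulus (Lemma~\ref{le71}(2)) gives
\[
\mM\bigl(\Delta([-t,t],\psi(A_1);\psi(D_1))\bigr)\le\mM\bigl(\Delta([-t,t],\{e^{i\phi}:0<\phi<\pi\};\D^{+})\bigr).
\]
The last modulus is computed just as in the proof of Lemma~\ref{Lem:LB-Tri-h}, via \cite[5.20,~7.32]{vu88} and \cite[(5.4)]{avv}: it equals $\pi/\mu\bigl(\th\tfrac12\rho_{\D}(-t,t)\bigr)=\pi/\mu\bigl(\th(b/2)\bigr)$. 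Stringing the displays together yields $\capa(\D,T)\le 3\pi/\mu\bigl(\sqrt3\,s/\sqrt{s^4+s^2+1}\bigr)$, which is~\eqref{eq:ineq-U}.

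The step I expect to demand the most care is the geometric claim that $\psi$ genuinely sends $D_1$ into a half-disk and $A_1$ into a single semicircle, i.e.\ that the two arcs of $\tilde S_1$ toward $\partial\D$ exit the sector $0<\arg z<2\pi/3$; this needs a small explicit computation with the geodesic circle through $a_0,a_1$ (its center lies on the ray $\arg z=\pi/3$ outside $\D$, and it meets $\partial\D$ at arguments outside $[0,2\pi/3]$). Once that is in place, the remainder is only conformal invariance, monotonicity of the modulus, and the half-disk evaluation already established for the lower bound.
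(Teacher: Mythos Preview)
Your argument is correct, and it reaches the same bound $3\pi/\mu\bigl(\th(b/2)\bigr)$ as the paper, but by a different route. The paper does not use the exact identity $\capa(\D,T)=3\,\mM(\Delta(S_1,A_1;D_1))$ at all; instead it works with the full family $\overline\Delta=\Delta(T,S^1;\D)$ and invokes subadditivity (Lemma~\ref{le71}) with respect to the three ``half'' families $\hat\Delta^\ast_j$ of curves joining the side $[a_{j-1},a_j]$ to $S^1$ while staying on the outer side of the geodesic line $\tilde S_j$, concluding $\mM(\overline\Delta)\le 3\,\mM(\hat\Delta^\ast_1)$ directly. Your approach instead starts from the exact threefold symmetry decomposition \eqref{eq:cap-B2T}, then applies a single M\"obius normalization and monotonicity to compare the quadrilateral $(S_1,A_1;D_1)$ with the half-disk quadrilateral $([-t,t],\{e^{i\phi}:0<\phi<\pi\};\D^{+})$.

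The trade-offs: the paper's argument is shorter and does not rely on the conformal map $f$ to the annulus, but its key inclusion $\overline\Delta\subseteq\hat\Delta^\ast_1\cup\hat\Delta^\ast_2\cup\hat\Delta^\ast_3$ is stated without justification (a curve in $D$ issuing from $S_j$ need not remain on the outer side of $\tilde S_j$, since it may cross $\tilde S_j\setminus S_j$). Your route avoids that issue entirely: once you know $D_1$ lies on one side of $\tilde S_1$, monotonicity is immediate. You rightly flag that geometric claim as the point needing care; it does hold, since the orthogonal circle through $a_0,a_1$ has center $(s+1/s)e^{i\pi/3}$ and meets $\partial\D$ at arguments $\pi/3\pm\phi$ with $\cos\phi=s/(1+s^2)<1/2$, hence $\phi>\pi/3$, placing both endpoints outside the sector $[0,2\pi/3]$.
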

\begin{proof}
For $j=1,2,3$, let $\hat\Delta_j=\Delta([a_{j-1},a_j],S^1,\D)$ where $[a_{j-1},a_j]$ is the hyperbolic segment from $a_{j-1}$ to $a_j$ (see Figure~\ref{fig:tri-equ}).
Here $a_j=s\,e^{j\i\theta}$ for $j=0,1,2,3$ and $\theta=2\pi/3$, i.e., $a_3=a_0$.
Then~\cite[7.32]{vu88}, 
\begin{equation}\label{eq:cap-D}
\mM(\hat\Delta_j)= \gamma_2\left(\frac{1}{\th\frac{1}{2}\rho_{\D}(a_{j-1},a_j)}\right)
= \frac{2\pi}{\mu\left(\th\frac{1}{2}\rho_{\D}(a_{j-1},a_j)\right)}, \quad j=1,2,3.
\end{equation}
Note that $\mM(\hat\Delta_1)=\mM(\hat\Delta_2)=\mM(\hat\Delta_3)$. Let also $\hat\Delta^\ast_j$ be the set of all curves in $\hat\Delta_j$ not intersecting the hyperbolic line through $a_{j-1}$ and $a_j$, then by symmetry,
\begin{equation}\label{eq:cap-D*}
\mM(\hat\Delta^\ast_j)
= \frac{\pi}{\mu\left(\th\frac{1}{2}\rho_{\D}(a_{j-1},a_j)\right)}, \quad j=1,2,3.
\end{equation}
By~\eqref{myrho}, we have
\begin{equation}\label{eq:rho-s}
\th\frac{\rho_{\D}(a_{0},a_1)}{2}
=\th\frac{\rho_{\D}(s,se^{\i\alpha})}{2}
=\frac{\sqrt{3}s}{\sqrt{s^4+s^2+1}},
\end{equation}
and hence
\begin{equation}\label{eq:cap-D*1}
\mM(\hat\Delta^\ast_1)
=\frac{\pi}{\mu\left(\th\frac{1}{2}\rho_{\D}(a_{0},a_1)\right)}
=\frac{\pi}{\mu\left(\frac{\sqrt{3}s}{\sqrt{s^4+s^2+1}}\right)}.
\end{equation}
It follows from~\eqref{eq:cap-D*} that $\mM(\hat\Delta^\ast_1)=\mM(\hat\Delta^\ast_2)=\mM(\hat\Delta^\ast_3)$ since $T$ is an equilateral hyperbolic triangle. 
Thus, for the family of curves $\overline{\Delta}=\Delta(T , S^1 ; G)$, by the subadditivity, Lemma \ref{le71}, we have
\begin{equation}\label{eq:cap-Db}
\capa(\D,T) = \mM(\overline{\Delta})\le 3\mM(\hat\Delta^\ast_1).
\end{equation}
The proof follows from~\eqref{eq:cap-D*1}. 
\end{proof}

Lemmas~\ref{Lem:LB-Tri} and~\ref{Lem:UB-Tri} yield the following corollary. 

\begin{cor}\label{cor:UB-Tri} 
For the equilateral hyperbolic triangle $T$ with the vertices $s$, $se^{2\pi\i/3}$, and $se^{4\pi\i/3}$, we have
\begin{equation}\label{eq:ineq-LU}
\frac{6\pi}{\mu\left(s^3\right)} \le
\capa(\D,T)
\le \frac{3\pi}{\mu\left(\sqrt{3}s/\sqrt{s^4+s^2+1}\right)}.
\end{equation}
\end{cor}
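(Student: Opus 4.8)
The final statement, Corollary~\ref{cor:UB-Tri}, is a purely formal consequence of the two preceding lemmas, so the plan is short and routine. The idea is simply to read off the lower bound from Lemma~\ref{Lem:LB-Tri} and the upper bound from Lemma~\ref{Lem:UB-Tri}, and observe that both refer to exactly the same object $\capa(\D,T)$, with $T$ the equilateral hyperbolic triangle with vertices $s$, $se^{2\pi\i/3}$, $se^{4\pi\i/3}$ (here writing $\theta=2\pi/3$ so that $se^{2\i\theta}=se^{4\pi\i/3}$, matching the notation $a_0,a_1,a_2$ used in those lemmas).

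Concretely, I would first invoke Lemma~\ref{Lem:LB-Tri}, which gives
\[
\capa(\D,T)\ge \frac{6\pi}{\mu\left(s^3\right)}.
\]
Then I would invoke Lemma~\ref{Lem:UB-Tri}, which gives
\[
\capa(\D,T)\le \frac{3\pi}{\mu\left(\sqrt{3}s/\sqrt{s^4+s^2+1}\right)}.
\]
Concatenating these two inequalities yields precisely~\eqref{eq:ineq-LU}, and the proof is complete. The only thing worth a sentence is to make sure the hypotheses align: both lemmas are stated for the same equilateral hyperbolic triangle $T$ with vertices on the circle $|z|=s$ at angular spacing $2\pi/3$, and the constant $s\in(0,1)$ is the same in both; the monotonicity and positivity of $\mu$ on $(0,1)$ (so that both right-hand sides are well-defined and finite) is implicit in the definition of $\mu$ recalled earlier in the paper, so nothing new is needed there.

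There is essentially no obstacle here: the corollary is a one-line deduction once Lemmas~\ref{Lem:LB-Tri} and~\ref{Lem:UB-Tri} are in hand. If anything, the ``main point'' is bookkeeping — confirming that $s^3 < \sqrt{3}s/\sqrt{s^4+s^2+1}$ for $s\in(0,1)$ (equivalently $s^2\sqrt{s^4+s^2+1}<\sqrt3$, which clearly holds since the left side is less than $\sqrt3$ on $(0,1)$), so that the displayed bracketing is genuinely non-vacuous and consistent with the decreasing character of $\mu$. But even this consistency check is not logically required for the statement as written; the two inequalities stand on their own. Hence I would present the proof as: ``This is immediate from Lemmas~\ref{Lem:LB-Tri} and~\ref{Lem:UB-Tri}.''
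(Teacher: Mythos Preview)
Your proposal is correct and matches the paper's approach exactly: the paper simply states that the corollary follows from Lemmas~\ref{Lem:LB-Tri} and~\ref{Lem:UB-Tri}, with no further argument. Your additional consistency check that $s^3<\sqrt{3}s/\sqrt{s^4+s^2+1}$ is a nice sanity observation but, as you note, not needed for the proof.
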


The inequality~\eqref{eq:ineq-LU} is confirmed by numerical results as in Figure~\ref{fig:h_tri_cap_LU} where the numerical values of $\capa(\D,T)$ are computed using the MATLAB function \verb|annq| with $n=3\times 2^{12}$.

\begin{figure}[hbt] %
\centerline{
\scalebox{0.6}{\includegraphics[trim=0 0 0 0,clip]{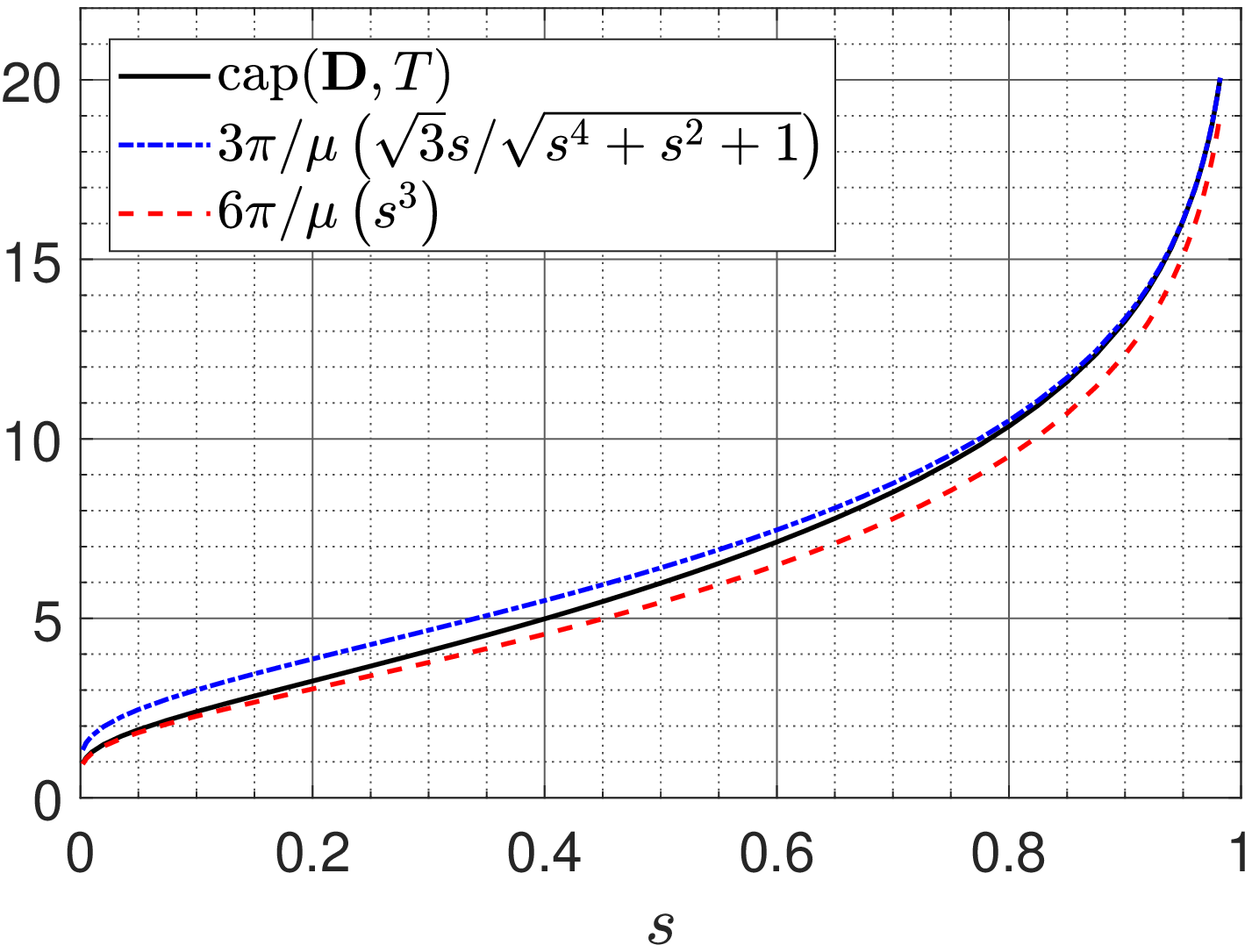}}
}
\caption{The capacity $\capa(\D,T)$ of the equilateral hyperbolic triangle $T$ with the vertices $s$, $se^{2\pi\i/3}$, and $se^{4\pi\i/3}$.}
\label{fig:h_tri_cap_LU}
\end{figure}

\nonsec{\rm {\bf The capacity, the hyperbolic perimeter, and the hyperbolic area.}
For the equilateral hyperbolic triangle $T$ (shown in Figure~\ref{fig:tri-equ}), we have (\cite[p.~150]{be})
\begin{equation}\label{eq:pre-are-1}
2\ch\frac{b}{2}\sin\frac{\beta}{2}=1.
\end{equation}
Let $u$ be the hyperbolic perimeter and $v$ be the hyperbolic area of the triangle $T$. Then $u=\mbox{h-perim}(T)=3b$ and $v=\mbox{h-area}(T)=\pi-3\beta$. Thus, $b=u/3$, $\beta=(\pi-v)/3$, and equation~\eqref{eq:pre-are-1} can be written as
\begin{equation}\label{eq:pre-are-3}
2\ch\frac{u}{6}\sin\frac{\pi-v}{6}=1.
\end{equation}

The upped bound of $\capa(\D,T)$ given in~\eqref{eq:ineq-LU} can be written in terms of the hyperbolic perimeter $u$ of the triangle $T$.
Since $T$ is an equilateral hyperbolic triangle, then $u=3\rho_{\D}(a_{0},a_1)$, and~\eqref{eq:rho-s} implies that
\begin{equation}\label{eq:thu-s}
\th\frac{u}{6}=\frac{\sqrt{3}s}{\sqrt{s^4+s^2+1}}.
\end{equation}
Then it follows from~\eqref{eq:ineq-U} that
\begin{equation}\label{eq:cap-Db-2}
\capa(\D,T) \le \frac{3\pi}{\mu(\th\frac{u}{6})}
\end{equation}

The upped bound also can be written in terms of the hyperbolic area $v$ of the triangle $T$.
Using the formula (\cite[(5.2)]{avv})
\[
\mu(r)\mu\left(\sqrt{1-r^2}\right)=\frac{\pi^2}{4},
\]
with $r=\th(u/6)$ and hence $\sqrt{1-r^2}=1/\ch(u/6)$, we obtain
\begin{equation}\label{eq:pre-are-4}
\mu\left(\th\frac{u}{6}\right)\mu\left(\frac{1}{\ch\frac{u}{6}}\right)=\frac{\pi^2}{4},
\end{equation}
which in view of~\eqref{eq:pre-are-3} can be written as
\begin{equation}\label{eq:pre-are-5}
\mu\left(\th\frac{u}{6}\right)\mu\left(2\sin\frac{\pi-v}{6}\right)=\frac{\pi^2}{4}.
\end{equation}
Hence, the inequality~\eqref{eq:cap-Db-2} can be written in terms of the hyperbolic  area $v$ of the triangle $T$ as 
\begin{equation}\label{eq:conj-cap-A2T}
\capa(\D,T) \le \frac{12}{\pi}\mu\left(2\sin\frac{\pi-v}{6}\right).
\end{equation}

It is possible to write also the lower bound of $\capa(\D,T)$ given in~\eqref{eq:ineq-LU} in terms of the hyperbolic perimeter $u$ and the hyperbolic area $v$ of the triangle $T$. This can be done by writing $s^3$ in terms of $u$ and $v$. 
By~\eqref{eq:thu-s}, we can show that
\[
\sh\frac{u}{6}=\frac{\sqrt{3}\,s}{1-s^2}, 
\]
and hence
\[
\sh\frac{u}{6}\th^2\frac{u}{6}=\frac{3\sqrt{3}\,s^3}{1-s^6}.
\]
Consequently, we have
\begin{equation}\label{eq:s^3-u}
s^3=-\sigma+\sqrt{\sigma^2+1}, \quad \sigma=\frac{3\sqrt{3}}{2\sh\frac{u}{6}\th^2\frac{u}{6}},
\end{equation}
which rewrite~\eqref{eq:ineq-LU} in terms of the hyperbolic perimeter $u$.
Further, in view of~\eqref{eq:thu-s} and~\eqref{eq:pre-are-3}, we can show that
\[
\ch\frac{u}{6}=\frac{\sqrt{s^4+s^2+1}}{1-s^2}, \quad
\sin\frac{\pi-v}{6}=\frac{1-s^2}{2\sqrt{s^4+s^2+1}},
\]
and hence
\[
\tan\frac{\pi-v}{6}=\frac{1-s^2}{\sqrt{3}(1+s^2)}.
\]
Thus
\begin{equation}\label{eq:s^3-v}
s^3=\left(\frac{1-\tau}{1+\tau}\right)^{3/2}, \quad \tau=\sqrt{3}\tan\frac{\pi-v}{6},
\end{equation}
rewrite~\eqref{eq:ineq-LU} using the hyperbolic area $v$.

\bigskip

\noindent\textbf{Acknowledgments.}

The authors are indebted to Prof. Alex Solynin for bringing \cite{so1,so2,soz} to our attention.
We are also grateful to Prof. D. Betsakos for informing us  about F.W. Gehring's result~\cite[Corollary~6]{g}.


\begin{thebibliography}{CHKV}

\bibitem[Ah]{ah} {\small \textsc{L.V.
Ahlfors}: Conformal invariants. McGraw-Hill, New York, 1973.}

\bibitem[AVV]{avv} {\small \textsc{G. D. Anderson, M. K. Vamanamurthy, and
M. Vuorinen}: Conformal invariants, inequalities and quasiconformal maps. J.
Wiley, 1997. }

\bibitem[BAE]{bae}{\small \textsc{A. Baernstein}: Symmetrization in analysis.
With David Drasin and Richard S. Laugesen. With a foreword by Walter Hayman. New Mathematical Monographs, 36. Cambridge University Press, Cambridge, 2019. xviii+473 pp.}

\bibitem[B]{ban} {\small  \textsc{C. Bandle:} Isoperimetric inequalities and applications.
Monographs and Studies in Mathematics, 7. Pitman (Advanced Publishing Program), 
Boston, Mass.-London, 1980. x+228 pp.}

\bibitem[Be]{be}{\small {\sc A. F. Beardon}: The geometry of discrete groups. 
Graduate texts in Math., Vol. 91, Springer-Verlag, New York, 1983.}

\bibitem[Bet]{bet} {\small  \textsc{D.  Betsakos:} Geometric versions of Schwarz's 
lemma for quasiregular mappings.  Proc. Amer. Math. Soc. 139 (2011), no. 4, 1397--1407.}

\bibitem[BSV]{bsv} {\small \textsc{D.
Betsakos, K. Samuelsson, and M. Vuorinen}: The computation of capacity of planar 
condensers. Publ. Inst. Math. (Beograd) (N.S.) 75(89) (2004), 233--252.}

\bibitem[BV]{bv} {\small \textsc{D. Betsakos and M. Vuorinen }: Estimates for conformal 
capacity. Constr. Approx. 16 (2000), no. 4, 589--602.}

\bibitem[BBG]{bbgghv} {\small \textsc{  S. Bezrodnykh, A. Bogatyrev, S. Goreinov, O. Grigoriev, H. Hakula, and M. Vuorinen}:  On capacity computation for symmetric polygonal condensers. {J. Comput. Appl. Math. 361 (2019), 271--282}.}

\bibitem[BCH]{bch} {\small \textsc{C. Bianchini, G. Croce, and A. Henrot}:
On the quantitative isoperimetric inequality in the plane with the barycentric distance. arXiv 1904.02759}

\bibitem[BS]{bs} {\small  \textsc{ F. Brock and A.Yu. Solynin:} An approach to symmetrization 
via polarization. (English summary) Trans. Amer. Math. Soc. 352 (2000), no. 4, 1759--1796.}

\bibitem[DMM]{dmm} {\small \textsc{G.De Philippis, M. Marini, and E. Mukoseeva}: The sharp quantitative isocapacitary inequality. arXiv 1901.11309
}

\bibitem[Du]{du} {\small \textsc{V. N. Dubinin}: Condenser Capacities and Symmetrization in Geometric Function Theory, Birkh\"auser, 2014.}

\bibitem[GM]{garmar}{\small \textsc{J.B. Garnett and D.E. Marshall}:
Harmonic measure. Reprint of the 2005 original. New Mathematical Monographs,
2. Cambridge University Press, Cambridge, 2008. xvi+571 pp.}

\bibitem[G]{g}{\small \textsc{F.W. Gehring}:
Inequalities for condensers, hyperbolic capacity, and extremal lengths.
Michigan Math. J. 18 (1971), 1--20.}

\bibitem[GG]{Gre-Gim12}{\small \textsc{L. Greengard and Z. Gimbutas}: 
FMMLIB2D: A {MATLAB} toolbox for fast multipole method in two dimensions.
Version 1.2, \url{http://www.cims.nyu.edu/cmcl/fmm2dlib/fmm2dlib.html}. Accessed 1 Jan 2018.}

\bibitem[HKV]{hkv}{\small  \textsc{P.~Hariri, R.~Kl\'en and M.~Vuorinen:} Conformally invariant metrics and quasiconformal mappings. Springer Monographs in Mathematics, Springer, 2020.}

\bibitem[J]{jen}{\small  \textsc{J.A. Jenkins:} Univalent functions and conformal mapping.  Springer-Verlag Berlin Heidelberg, 1958.}

\bibitem[KL]{kela}{\small \textsc{L. Keen and N.  Lakic}: Hyperbolic geometry 
from a local viewpoint. London Mathematical Society Student Texts, 68. 
Cambridge University Press, Cambridge, 2007. x+271 pp}

\bibitem[KES]{kes}{\small \textsc{S. Kesavan}: Symmetrization \& applications. Series 
in Analysis, 3. World Scientific Publishing Co. Pte. Ltd., Hackensack, NJ, 2006. xii+148 pp.}

\bibitem[LV]{LV} {\small \textsc{O.~Lehto and K.I.~Virtanen}:
Quasiconformal mappings in the plane, 2nd edition, Springer, Berlin, 1973.}

\bibitem[LSN]{LSN17} {\small  \textsc{J. Liesen, O. S\'ete and M.M.S. Nasser}: Fast and 
accurate computation of the logarithmic capacity of compact sets. Comput. Methods 
Funct. Theory  17 (2017), 689--713.}

\bibitem[N1]{Nas-Siam1} {\small  \textsc{M.M.S. Nasser}: Numerical conformal 
mapping via a boundary integral equation with the generalized Neumann kernel. 
SIAM J. Sci. Comput. 31 (2009), 1695--1715.}

\bibitem[N2]{Nas-ETNA} {\small  \textsc{M.M.S. Nasser}: Fast solution of boundary integral equations with the generalized Neumann kernel. Electron. Trans. Numer. Anal. 44 (2015), 189--229.}

\bibitem[NV]{nv} {\small  \textsc{M.M.S. Nasser  and M.~Vuorinen:} Computation of conformal invariants. Appl. Math. Comput. 389 (2021), 125617.}

\bibitem[PS]{pos} {\small \textsc{G. P\'{o}lya  and G. Szeg\"{o}}: Isoperimetric Inequalities 
in Mathematical Physics. Annals of Mathematics Studies, no. 27, Princeton University Press, 
Princeton, N. J., 1951. xvi+279 pp}

\bibitem[SAR]{sar} {\small \textsc{J. Sarvas}: Symmetrization of condensers in n-space.
Ann. Acad. Sci. Fenn. Ser. A. I. 1972, no. 522, 44 pp. }

\bibitem[SL]{sl} {\small \textsc{R. Schinzinger and P. A. A.  Laura}:  Conformal mapping. 
Methods and applications. Revised edition of the 1991 original. Dover Publications, 
Inc., Mineola, NY, 2003. xxiv+583 pp.}

\bibitem[SO1]{so1} {\small \textsc{ A. Yu. Solynin}: Solution of the P\'{o}lya-Szeg\"{o} isoperimetric problem. (Russian) Zap. Nauchn. Sem. Leningrad. Otdel. Mat. Inst. Steklov. (LOMI) 168 (1988), Anal. Teor. Chisel i Teor. Funktsii. 9, 140--153, 190; translation in J. Soviet Math. 53 (1991), no. 3, 311--320.}

\bibitem[SO2]{so2} {\small \textsc{ A. Yu. Solynin}: Some extremal problems on the hyperbolic polygons. Complex Variables Theory Appl. 36 (1998), no. 3, 207--231.}

\bibitem[SOZ]{soz} {\small \textsc{ A. Yu. Solynin and V.A. Zalgaller}: An isoperimetric inequality for logarithmic capacity of polygons. Ann. of Math. (2) 159 (2004), no. 1, 277--303.}

\bibitem[VA]{Vas02}{\small \textsc{A. Vasil{'}ev}: Moduli of Families of Curves
for Conformal and Quasiconformal Mappings. Springer-Verlag, Berlin, 2002.}

\bibitem[V]{vu88} {\small \textsc{M. Vuorinen}:  Conformal geometry and quasiregular 
mappings. Lecture Notes in Mathematics, 1319. Springer-Verlag, Berlin, 1988. xx+209 pp.}

\bibitem[WN]{Weg-Nas}{\small \textsc{R. Wegmann and  M.M.S.  Nasser}:
The Riemann-Hilbert problem and the generalized Neumann kernel on multiply 
connected regions. J. Comput. Appl. Math. 214 (2008), 36--57.}

\end{thebibliography}
\end{document}